\DeclareSymbolFont{calletters}{OMS}{cmsy}{m}{n}
\DeclareSymbolFontAlphabet{\mathcal}{calletters}
\def\be{\begin{eqnarray}}
\def\ee{\end{eqnarray}}
\def\b*{\begin{eqnarray*}}
\def\e*{\end{eqnarray*}}
\newtheorem{Theorem}{Theorem}[section]
\newtheorem{Definition}[Theorem]{Definition}
\newtheorem{Proposition}[Theorem]{Proposition}
\newtheorem{Assumption}[Theorem]{Assumption}
\newtheorem{Lemma}[Theorem]{Lemma}
\newtheorem{Corollary}[Theorem]{Corollary}
\newtheorem{Remark}[Theorem]{Remark}
\makeatletter \@addtoreset{equation}{section}
\def \E{\mathbb{E}}
\def \F{\mathbb{F}}
\def \H{\mathbb{H}}
\def \L{\mathbb{L}}
\def \P{\mathbb{P}}
\def \R{\mathbb{R}}
\def \S{\mathbb{S}}
\def \M{\mathbf{M}}
\def \N{\mathbb{N}}
\def\Ac{{\cal A}}
\def\Bc{{\cal B}}
\def\Cc{{\cal C}}
\def\Fc{{\cal F}}
\def\Jc{{\cal J}}
\def\Lc{{\cal L}}
\def\Mc{{\cal M}}
\def\Pc{{\cal P}}
\def\Sc{{\cal S}}
\def\Wc{{\cal W}}
\def\Omt{\tilde{\Om}}
\def\Fct{\tilde{\Fc}}
\def\Zt{\tilde{Z}}
\def \Om{\Omega}
\def \om{\omega}
\def \omt{\tilde{\om}}
\def \xb{\mathbf{x}}
\def \0{\mathbf{0}}
\def \Xh{\widehat{X}}
\def\x{\times}
\def\1{{\bf 1}}
\def \proof{{\noindent \bf Proof.\quad}}
\def \no{\noindent}
\def \ep{\hbox{ }\hfill{ ${\cal t}$~\hspace{-5.1mm}~${\cal u}$}}
\newcommand{\bea}{\begin{eqnarray}}
\newcommand{\bes}{\begin{subequations}}
\newcommand{\ees}{\end{subequations}}
\newcommand{\bgt}{\begin{gather}}
\newcommand{\egt}{\begin{gather}}
\newcommand{\eea}{\end{eqnarray}}
\newcommand{\beaa}{\begin{eqnarray*}}
\newcommand{\eeaa}{\end{eqnarray*}}
\def\x{\times}
\def\Om{\Omega}
\def\Omt{\widetilde{\Omega}}
\def\Omh{\widehat{\Omega}}
\def\om{\omega}
\def\Fct{\widetilde{\Fc}}
\def\Ft{\widetilde{\F}}
\def\Wt{\widetilde{W}}
\def\Xt{\widetilde{X}}
\def\0{\mathbf{0}}
\def \xb{\mathbf{x}}
\def \mub{\overline{\mu}}
\def \muh{\widehat{\mu}}
\def \mut{\widetilde{\mu}}
\def\normeL2#1{\left\|{#1}\right\|_{L^2}}
\def\Xh{\widehat X}
\def \Lim{\displaystyle\lim}
\def \Xbb{\mathbf{X}}
\def \Wbb{\mathbf{W}}
\def \Pr{\mathrm{P}}
\def \Qr{\mathrm{Q}}
\title{\bf Mean Field Game of Mutual Holding}
\author{Mao Fabrice Djete\thanks{Ecole Polytechnique Paris, Centre de Math\'ematiques Appliqu\'ees,
mao-fabrice.djete@polytechnique.edu. This work benefits from the financial support of the Chairs {\it Financial Risk} and {\it Finance and Sustainable Development}. }
        \and Nizar Touzi\thanks{Ecole Polytechnique Paris, Centre de Math\'ematiques Appliqu\'ees,
        nizar.touzi@polytechnique.edu. This work benefits from the financial support of the Chairs {\it Financial Risk} and {\it Finance and Sustainable Development}. }}
\date{\today}
\begin{document}



\maketitle

\begin{abstract}
We introduce a mean field model for optimal holding of a representative agent of her peers as a natural expected scaling limit from the corresponding $N-$agent model. The induced mean field dynamics appear naturally in a form which is not covered by standard McKean-Vlasov stochastic differential equations. We study the corresponding mean field game of mutual holding in the absence of common noise. {\color{black} Our first main result provides an explicit equilibrium of this mean field game, defined by a bang--bang control consisting in holding 
those competitors with positive drift coefficient of their dynamic value. We next use this mean field game equilibrium to construct (approximate) Nash equilibria for the corresponding $N$--player game. We also provide some numerical illustrations of our mean field game equilibrium which highlight some unexpected effects induced by our results.
}

\end{abstract}

\vspace{3mm}
\no{\bf Keywords.} Mean field McKean-Vlasov stochastic differential equation, mean field game, backward stochastic differential equations.

\no{\bf MSC2010.} 60K35, 60H30, 91A13, 91A23, 91B30.

\section{Introduction}

Connexions between economic actors lies at the heart of the foundations of financial theory as it offers the possibility of risk diversification. Such interconnexions result from mutual holding equity or debt obligations, and investment in common financial assets, thus illustrating the strategic behavior of economic actors in terms of risk sharing. The equity value is then determined endogenously, consistently with the rational behavior of agents. The value of these claims depends on the financial health of 
the obligor who, himself, might be an obligee, and consequently his financial health depends on that of his obligors. 

The strategic risk sharing behavior of economic actors thus induces a widely ramified network with important consequences in terms of financial stability. The highly desirable benefits from risk diversification are however balanced with the potential negative effects in terms of financial instability due to the network complexity. Agents holding claims on their 
debtors may experience a shock, and become unable to face their promised liability, thus impacting other actors through these linkages, and thereby creating a domino effect of financial distress. This contagion problem is at the heart of the so-called systemic risk, as it may possibly lead to a systemic crisis illustrated by the deficiency of a large part of the network. 

The analysis of economic actors interconnexions and their consequences in 
terms of financial stability has gained considerable attention in the literature on financial intermediaries, see e.g. \citeauthor*{allen2000financial} \cite{allen2000financial}, \citeauthor*{giesecke2004cyclical} \cite{giesecke2004cyclical}, and \citeauthor*{shin2009securitisation} \cite{shin2009securitisation}, \citeauthor*{eisenberg2001systemic} \cite{eisenberg2001systemic}, and \citeauthor*{acemoglu2015systemic} \cite{acemoglu2015systemic}.

In addition, as a major consequence of the last financial crisis, the financial regulation is relying on system-wide stress tests in order to evaluate the resilience of the financial system and the solidity of some key financial actors in the network. The technical development underlying these tools are highly difficult to conduct, due to the complexity of the network and the corresponding huge data bases. We refer to \citeauthor*{aikman2019system} \cite{aikman2019system}. In particular, this paper suggests certain dynamics of the market actors strategies in reaction to some market shocks. Such dynamics are set as rules for the simulation of the network system evolution, and thus defines the so-called agent-based model assumed to generate the equilibrium dynamics of the financial system. These rules are certainly reasonable, but have no reason to be optimal in any 
sense. Moreover, there is no justification that the limiting situation achieved after some steps of the dynamic simulation of the agent-based model is not known  to converge to some equilibrium in some sense.

We also refer to the continuous time literature which emerged during the last decade motivated by the development of McKean-Vlasov stochastic differential equations in financial modeling. \citeauthor*{garnier2013large} \cite{garnier2013large} analyze the large deviations of an interacting system of SDEs with a bistable potential as stabilizing force. \citeauthor*{carmona2013mean} \cite{carmona2013mean} model the network by a bird flocking mean field game model. \citeauthor*{hambly2019spde} \cite{hambly2019spde} analyze the scaling limit of an interacting system of defaultable agents with prescribed dynamics, \citeauthor*{nadtochiy2019particle} \cite{nadtochiy2019particle} prove existence for a mean field SDE with a singular dependence on the law of the hitting time of the origin, see also \citeauthor*{bayraktar2020mckean} \cite{bayraktar2020mckean}.

In this paper, we focus on the agents strategic behavior of building their interconnexions due to cross--holding. Each actor has some prescribed idiosyncratic risk, and seeks for a risk diversification of her position by 
optimally holding a proportion of the other actors. As all actors are simultaneously seeking this risk diversification objective, this leads to a 
situation of risk diversification of interacting agents, that we address by analyzing the existence of a Nash equilibrium of this game of mutual holding. 

In this paper, we focus on the mean field limiting model of a representative agent optimal mutual holding problem. The mean field limit is formally justified 
by symmetry arguments motivated by the anonymity of the system actors, so 
that their actions are represented through their distribution. In this paper, we restrict our attention to the context of independent idiosyncratic risk, and we leave the important case with common noise to future work. 
The mean field dynamics of the representative actor's equity value is new 
to the 
literature on the McKean-Vlasov type of mean field stochastic differential equations, as it exhibits two novel features. First, the corresponding coefficients may be discontinuous with respect to the state. Second, the dynamics depends on the law of the process through the expectation of a stochastic integration with respect to a copy of the process. The last feature is simplified at the equilibrium solution of our problem, as it reduces to an integral with respect to the equilibrium drift. Consequently, our proof of existence mainly addresses the discontinuity of the diffusion coefficient in the uniformly elliptic setting.

Our main results provide first an explicit Nash equilibrium of our mean field game of mutual holding. Due to the particular feature of our mean field SDE, the corresponding notion of mean field game is also outside the scope of the mean field games introduced by \citeauthor*{lasry2007mean} \cite{lasry2007mean}, and \citeauthor*{huang2003individual} \cite{huang2003individual}. From the PDE point of view, it may be expressed as a coupled system of backward HJB equation and a forward Focker-Planck equation combined 
with an integral equation for the equilibrium drift coefficient of the state. The latter is a novel feature for the present mean field game of mutual holding.

Our explicit mean field equilibrium is of bang--bang type, and  corresponds to the situation where the representative agent fully holds at each time $t$ those competitors whose drift coefficient is above some threshold $c(t,\mu_t)$ depending on the distribution $\mu_t$ of the population, and has 
no connexion with those competitors whose drift coefficient is below this 
threshold.

{\color{black}
Our next main result provides an approximate Nash equilibrium for the associated $N$--player game construct via the mean field game equilibrium. Due to the specificity of our mean field game problem, this construction is unusual in the MFG literature and to achieve it, we have to face technical difficulties related to the dynamics of the players.
}

The paper is organized as follows. Section \ref{sec:mutualholding} contains the formulation of the mutual holding problem, and introduces the corresponding mean field dynamics. Section \ref{sec:mfg} introduces our mean field game of mutual holding. The main results are 
contained in Section \ref{sec:mainresults}. Section \ref{sec:intuition} translates our mean field game formulation in the PDE language, thus highlighting the difference between our problem and the standard mean field games literature. {\color{black} Section \ref{sec:numerical} provides some numerical illustration of our main results in a toy example.} The proof of our characterization of the optimal mutual holding problem is reported in Section \ref{sec:proofmfg}. {\color{black}In Section \ref{sec:approx-nash}, we present the proof of the construction of approximate Nash equilibria for the $N$--player game. Finally Sections \ref{sec:SDE} and \ref{sec:density} are dedicated to some general technical results: existence of solution for a class of mean field stochastic differential equations with irregular coefficients and existence of density for the limiting distribution of a uniformly elliptic particles system.} 

\section{Interaction by mutual holding} \label{section_1}
\label{sec:mutualholding}

This section introduces the finite agent mutual holding model, and introduces the main mean field mutual holding model of this paper.\footnote{The model formulation results from various discussions of the second author with Charles Bertucci during most interesting interactions with the financial stability services of the Banque de France.} Our approach is to solve this mean field game problem, and to deduce an approximate equilibrium for the finite population Nash equilibrium. The convergence of the finite population problem to the mean field one follows as a by-product of this approach.

We consider $N$ economic agents with equity value induced by the idiosyncratic risk processes defined by the dynamics 
 \begin{align*}
    P^i_t
    =
    P^i_0+\int_0^t b^i_s \mathrm{d}s + \int_0^t \sigma^{i}_s \mathrm{d}W^i_s 
    + \int_0^t \sigma^{i,0}_s \mathrm{d}W^0_s,\;t \in [0,T],
    \;i=1,\ldots,N,
 \end{align*}
where $T>0$ is a fixed finite maturity, $W^0,\ldots,W^N$ are independent Brownian motions on a filtered probability space $(\Omega,\Fc,\F=\{\Fc_t\}_{0 \le t\le T},\P)$, and $b^i,\sigma^i,\sigma^{i,0}$ are coefficients
satisfying appropriate conditions for the wellposedness of such dynamics. Here $W^0$ is a common noise which affects the dynamics of all the $P_i$'s. Let us observe that the analysis of the present paper will be conducted in the case $\sigma^{i,0}=0$ for all $i=1,\ldots,N$. However, we shall provide the model formulation under the general situation so as to emphasize the main difficulties raised by the presence of common noise, and we leave the analysis of this setting to future research.

We assume that agents are allowed to hold each other as follows. At each time $t$, Agent $i$ chooses her optimal holdings $\pi^{i,j}_t$ in each of 
her competitors $j=1,\ldots,N$, $j\neq i$. As usual, we may reduce to the zero interest rates setting, without loss of generality, by expressing 
all amounts in terms of discounted values. Then, under the self-financing 
condition, the dynamics of the value process $X^i$ of the equity value of 
agent $i$ is given by:
 \begin{align} \label{eq_nagents-intuition}
 \mathrm{d}X^i_t
 =
 \mathrm{d}P^i_t
 + \sum_{j=1}^N \pi^{i,j}_t \mathrm{d}X^j_t
 - \sum_{j=1}^N \pi^{j,i}_t \mathrm{d}X^i_t,
 ~~i=1,\ldots,N.
 \end{align}
The first summation in the last dynamics indicates agent $i$'s returns from the holdings in the competitors, while the second one records the returns of the other competitors from holding agent $i$.

Given the strategies $\big(\pi^{i,\cdot}:=(\pi^{i,1},\cdots,\pi^{i,N}) \big)_{1 \le i \le N},$ the reward of Agent $i$ is given by 
\begin{align*}
    J_i\big( \pi^{1,\cdot},\cdots,\pi^{N,\cdot} \big)
    :=
    \E^{\P} \big[ U\big( X^i_T \big) \big].
\end{align*}
Each Agent $i$ aims at maximizing her reward following a Nash equilibrium criterion. More precisely, $\big( \pi^{1,\cdot},\cdots,\pi^{N,\cdot} \big)$ is a Nash equilibrium if none of the players can gain by deviating from it, i.e. for any admissible strategy $\beta:=(\beta^{1},\cdots,\beta^N)$, 
\begin{eqnarray}\label{N-natural}
    J_i\big( \pi^{1,\cdot},\cdots,\pi^{N,\cdot} \big)
    \ge 
    J_i\big( \pi^{1,\cdot},\cdots,\pi^{i-1,\cdot},\beta,\pi^{i+1,\cdot},\cdots,\pi^{N,\cdot} \big),
    ~\mbox{for all}~i=1,\ldots,N.
\end{eqnarray}

Our objective is to analyze the optimal strategic holdings $(\pi^{i,\cdot})_{1 \le i \le N}=(\pi^{i,j})_{1 \le i,j \le N}$ in the present context where each agent controls her own holdings in the others, and undergoes her competitors' holding decision, in particular in her own asset.

This paper first focuses on the mean field limit corresponding to an appropriate large population scaling limit induced by the following symmetry considerations. In order to formulate such conditions, we introduce the agent's values empirical measures 
$$
\mu^N_t:=\frac1N\sum_{i=1}^N\delta_{X^i_t},~~t \in [0,T].
$$ 
We first impose the anonymity of the idiosyncratic value process by restricting the 
coefficients defining its dynamics  by 
$$
b^i_t=b(t,X^i_t,\mu^N_t),~~
\sigma^i_t=\sigma(t,X^i_t,\mu^N_t),~~
\mbox{and}~~\sigma^{i,0}_t=\sigma^0(t,X^i_t,\mu^N_t).
$$ 
The second key assumption guarantees the behavioral anonymity of agents by restricting their holdings decisions to depend only on their own equity value, that of the investment competitor, and possibly the empirical measure $\mu^N_t$. Then assuming that the proportions of holdings are of order $\frac1N$, we may consider the last dynamics as:
 \b*
 \mathrm{d}X^i_t
 &=&
 \frac1N\sum_{j=1}^N \beta(t,X^i_t,X^j_t,\mu^N_t) \mathrm{d}X^j_t
 - \frac1N\sum_{j=1}^N \pi(t,X^j_t,X^i_t,\mu^N_t) \mathrm{d}X^i_t \nonumber
 \\
 &&
 +b(t,X^i_t,\mu^N_t)\mathrm{d}t+\sigma(t,X^i_t,\mu^N_t)\mathrm{d}W^i_t+\sigma^0(t,X^i_t,\mu^N_t)\mathrm{d}W^0_t,
 \e*
where we abuse notations by setting $\pi^{j,i}_t=\frac1N\pi(t,X^j_t,X^i_t,\mu^N_t)$, and we denoted agent $i$'s decision variable differently by $\beta$ because it will be the main focus of the individual optimization component of our equilibrium definition. 

By the independence of the Brownian motions $(W^i)_{i\ge 1}$, we now guess that the infinite number of agents limit $N\to\infty$ exhibits a propagation of chaos effect, conditional on the common noise $W^0$. According to the classical mean field game intuition, the contribution of player $i$ 
in the empirical distribution $\mu^N_t$ is negligible. This leads naturally to postulate the following mean field game problem: a $W^0-$measurable 
probability measure $\mu$ is MFG equilibrium if 
\begin{enumerate}
\item[{\rm (i)}] $\mu=\P^{W^0} \circ (X^{\pi,\pi})^{-1}$ is the law of $X^{\pi,\pi}$ conditional on the common noise $W^0$, $\mu_s=\P^{W^0}\circ (X^{\pi,\pi}_s)^{-1}$, where $X^{\beta,\pi}$ is governed by the dynamics
 \b*
 X_t
 &\!\!\!=&\!\!\!
 X_0
 +\widehat{\E}^{\mu}\bigg[ \int_0^t \!\!\!\beta(s,X_s,\widehat{X}_s, \mu_s) \mathrm{d}\widehat{X}_s \bigg]
 \;-\;\int_0^t \!\widehat{\E}^{\mu}\!\big[\pi(s,\widehat{X}_s,X_s, \mu_s)\big] \mathrm{d}X_s
 \\
 &\!\!\!\!\!&\!\!\!
 +\;\int_0^t b(s,X_s,\mu_s)\mathrm{d}s\;+\;\int_0^t\sigma(s,X_s,\mu_s)\mathrm{d}W_s
 \;
 +\;\int_0^t\sigma^0(s,X_s,\mu_s)\mathrm{d}W^0_s,
 \e*
 with  $\{\widehat{X}_t,t\ge 0\}$ denoting the canonical process on the space of continuous paths, and the operator $\widehat\E^\mu[\cdot]:=\int \cdot\, \mu(d\hat x)$ the corresponding ($W^0-$conditional) expectation operator under $\mu$,
\item[{\rm (ii)}] and the following optimality condition holds for the 
control $\pi$
 \b*
     \E^{\P} \big[ U \big( X^{\pi,\pi}_T \big) \big]
     &\ge& 
     \E^{\P} \big[ U \big( X^{\beta,\pi}_T \big) \big],\;\;\mbox{for all}\;\beta.
 \e*
\end{enumerate}
Due to the integration with respect to the copy $\widehat{X}$, the last mean field dynamics lies outside the scope of standard McKean-Vlasov type of mean field stochastic differential equations. 

Moreover, the optimization problem is not standard in the MFG literature, 
in particular because in addition to the measure $\mu,$ a part of the control $\pi$ is also fixed. Our objective is to provide a precise meaning to it, and to define a notion of mean field game of mutual holding by an appropriate Nash equilibrium within the population of mean field interacting agents.

In the situation with no common noise $\sigma^0\equiv 0$, we rewrite the last dynamics in the following simpler form:
 \be\label{controlled}
 X_t
 &\!\!\!=&\!\!\!
 X_0
 +\widehat{\E}^{\mu}\bigg[ \int_0^t \!\!\!\beta(s,X_s,\widehat{X}_s) \mathrm{d}\widehat{X}_s \bigg]
 - \int_0^t \!\widehat{\E}^{\mu}\!\big[\pi(s,\widehat{X}_s,X_s)\big] \mathrm{d}X_s
 \\
 &\!\!\!&\!\!\!
 +\int_0^t b\big(s,X_s,\mu_s \big)\mathrm{d}s+ \int_0^t\sigma\big(s,X_s,\mu_s \big)\mathrm{d}W_s,
 \nonumber
 \ee
 where the dependence of the decision variables $\beta$ and $\pi$ on the law is absorbed by the their dependence on the time variable, and $\widehat\E^\mu$ is the expectation operator on the canonical space of continuous paths. Here again the mean field dependence is new to the literature as it involves the mean of a stochastic integral with respect to a copy of the solution.
 
\section{Mean field game of mutual holding}
\label{sec:mfg}

\subsection{Mutual holding mean field SDE}

Let $T>0$ be a finite maturity. We denote by $\widehat{X}$ the canonical process on the paths space $\widehat\Omega:=C^0([0,T],\R)$, i.e. $\widehat X_t(\mathbf{x})=\mathbf{x}(t)$ for all $t \in [0,T],$ and $\mathbf{x}\in\widehat\Omega$. The corresponding raw filtration $\widehat\F^0:=\big\{\widehat\Fc^0_t,t\in[0,T]\big\}$ is defined by $\widehat\Fc^0_t:=\sigma\{\widehat X_s,s\le t\}$, and we shall work throughout with the right limit of its universal completion $\widehat\F:=\big\{\widehat\Fc_t,t\in[0,T]\big\}$, defined by $\widehat\Fc_t:=\lim_{s\searrow t}\Fc^{\rm U}_s$, with $\Fc^{\rm U}_t:=\cap_{\P\in{\rm Prob}(\widehat\Omega)}(\widehat\Fc^0_t)^\P$, and ${\rm Prob}(\widehat\Omega)$ the collection of all probability measures on $\widehat\Omega$.

We also fix some initial distribution $\nu \in \Pc_{p}(\R)$, with $p>2.$ Let $\Pc_\Sc$ be the collection of all probability measures $\mu$ on $\widehat\Omega$ such that $\widehat X$ is a $\mu-$square integrable It\^o process, i.e. 
 \be\label{BmuSigmamu}
 \widehat X_t &:=& \widehat X_0+\int_0^t B^\mu_r\, \mathrm{d}r+\int_0^t 
\Sigma^\mu_r \,\mathrm{d}W^\mu_r,~~t\in [0,T],~~
 \mu\mbox{--a.s. with}~~
 \mu\circ \widehat X_0^{-1}=\nu, 
 \ee
 for some $\mu$--Brownian motion $W^\mu$, and some $\widehat\F-$progressively measurable deterministic functions $B^\mu,\Sigma^\mu:[0,T]\times\widehat\Omega\longrightarrow\R$, with finite $\H^2_\mu$ norm $\|B^\mu\|^2_{\H^2_\mu}:=\widehat\E^\mu\big[\int_0^T |B^\mu_t|^2\mathrm{d}t\big]<\infty$ and $\|\Sigma^\mu\|_{\H^2_\mu}<\infty$. Here, $\widehat\E^\mu$ denotes 
the corresponding expectation operator on $(\widehat\Omega,\widehat\F,\mu)$. 
 
We now return to the mutual holding problem of the previous section with mutual holding strategy defined by a measurable function 
 $$
 \pi:[0,T]\times\R\times\R\longrightarrow[0,1].
 $$ 
We denote by $\Ac$ the collection of all such maps. Motivated by the discussion of the previous section, we fix some distribution $\mu\in\Pc_\Sc$, 
and we consider for all such holding strategy $\pi\in\Ac$ the following mean field SDE driven by a $\P-$Brownian motion $W$ on a filtered complete 
probability space $(\Omega,\Fc,\F=\{\Fc_t\}_{t \in [0,T]},\P)$:
 \b*
 X_t
 &\!\!=&\!\!
 X_0
 +\widehat\E^\mu\bigg[ \int_0^t \pi(s,X_s,\widehat X_s) \mathrm{d}\widehat X_s \Big]
 - \int_0^t \widehat\E^\mu\big[\pi(s,\widehat X_s,X_s)\big] \mathrm{d}X_s
 \\
 &&
 +\int_0^t b(s,X_s,\mu_s)\mathrm{d}s
 +\int_0^t \!\!\sigma(s,X_s,\mu_s)\mathrm{d}W_s
 \\
 &\!\!\!\!=&\!\!\!\!
 X_0
 +\!\!\int_0^t \!\!\Big(b(s,X_s,\mu_s)+\widehat\E^\mu\big[ \pi(s,X_s,\widehat X_s) B^\mu(s,\widehat X_{s\wedge .})\big]\Big)\mathrm{d}s
 -\!\! \int_0^t \!\!\widehat\E^\mu\big[\pi(s,\widehat X_s,X_s)\big] \mathrm{d}X_s~~~~
 \\
 &&
 \hspace{33mm}+\int_0^t \sigma(s,X_s,\mu_s)\mathrm{d}W_s,
 \e*
or in differential form:
 \begin{equation} \label{MF-SDE}
 \mathrm{d}X_s
 =
 \frac{b(s,X_s,\mu_s)+\widehat\E^\mu\big[ \pi(s,X_s,\widehat X_s) B^\mu(s,\widehat X_{s\wedge .})\big]}
        {1+\widehat\E^\mu\big[\pi(s,\widehat X_s,X_s)\big]}\mathrm{d}s
 +\frac{\sigma(s,X_s,\mu_s)}
        {1+\widehat\E^\mu\big[\pi(s,\widehat X_s,X_s)\big]}\mathrm{d}W_s.
 \end{equation}
Let $\Pc_\Sc(\pi)$ be the (possibly empty!) subset  consisting of all measures $\mu\in\Pc_\Sc$ such that the last SDE
has a weak solution $X\in\Sc$ satisfying $\mu=\P\circ X^{-1}$. 

Comparing \eqref{MF-SDE} with \eqref{BmuSigmamu}, we see that for $\mu\in\Pc_\Sc(\pi)$, 
 \b*
 B^\mu(t,\mathbf{x}(t\wedge \cdot))=B^\mu(t,\mathbf{x}(t))
 ,~~
 \Sigma^\mu(t,\mathbf{x}(t\wedge \cdot))=\Sigma^\mu(t,\mathbf{x}(t)),
 &t\in [0,T],&
 \mathbf{x}\in\widehat\Omega,
 \e* 
depend only on the current value of the path, and we obtain the following 
identification of the drift and diffusion coefficients of the mean field SDE:
 \begin{equation}\label{identification}
 B^\mu(t,x)=\frac{b(t,x,\mu_t)+\int_{\R} \pi(t,x,y)B^\mu(t,y)\mu_t(\mathrm{d}y)}
                           {1+\int_{\R} \pi(t,y,x)\mu_t(\mathrm{d}y)},
 ~~
 \Sigma^\mu(t,x)=\frac{\sigma(t,x,\mu_t)}
                           {1+\int_{\R} \pi(t,y,x)\mu_t(\mathrm{d}y)},
 \end{equation}
 $ \mathrm{d}t\otimes\mu_t(\mathrm{d}x)$--a.e. 
 
 \subsection{Mean field game formulation}
 \label{sect:MFGformulation}
 
 For all $\pi\in\Ac$ and $\mu\in\Pc_\Sc(\pi)$, the deviation of the representative agent from the mutual holding strategy $\pi\in\Ac$ to an alternative one $\beta\in\Ac$ is defined by introducing an equivalent probability measure $\P^\beta_{\pi,\mu}$ via the density with respect to $\P$: 
 $$
 \frac{\mathrm{d}\P^\beta_{\pi,\mu}}{\mathrm{d}\P}
 :=
 e^{\int_0^T \psi_t\mathrm{d}W_t-\frac12| \psi_t|^2\mathrm{d}t},
 ~\mbox{on}~\Fc_T,
 $$
where the process $\psi=\psi^\beta_{\pi,\mu}$ is defined by
 \be\label{psi}
 \psi_t
 :=
 \frac{\widehat\E^\mu\big[(\beta-\pi)(t,X_t,\widehat X_t)B^\mu(t,\widehat 
X_t)\big]}{\sigma(t,X_t,\mu_t)}
 =
 \frac{\int_{\R} (\beta-\pi)(t,X_t,y)B^\mu(t,y)\mu_t(\mathrm{d}y)}{\sigma(t,X_t,\mu_t)}.
 \ee
We shall assume below that the diffusion coefficient is bounded away from 
zero, so that the above change of measure is well-defined by the boundedness of $\pi$ and $\beta$ and the $\mu$--square integrability of $B^\mu$. Moreover, it follows from the Girsanov Theorem that the process $W^{{\pi,\mu,\beta}}_\cdot:=W_\cdot-\int_0^\cdot \psi_s\mathrm{d}s$ is a $\P^\beta_{\pi,\mu}-$Brownian motion, so that the $\P^\beta_{\pi,\mu}-$dynamics of the value process $X$ are given by
 \b*
 X_\cdot
 &=&
 X_0
 +\widehat\E^\mu\bigg[ \int_0^\cdot \beta(s,X_s,\widehat X_s) \mathrm{d}\widehat X_s \bigg]
 - \int_0^\cdot \widehat\E^\mu\big[\pi(s,\widehat X_s,X_s)\big] \mathrm{d}X_s
 \\
 &&
 \hspace{7mm}
 +\int_0^\cdot b(s,X_s,\mu_s)\mathrm{d}s
 +\int_0^\cdot \sigma(s,X_s,\mu_s)\mathrm{d}W^{\pi,\mu,\beta}_s,
 \\
 &=&
 X_0
 +\int_0^\cdot \Big( b(s,X_s,\mu_s)+\widehat\E^\mu\big[\beta(s,X_s,\widehat X_s) B^\mu(s,\widehat X_s)\big] \Big)\mathrm{d}s
 - \int_0^\cdot \widehat\E^\mu\big[\pi(s,\widehat X_s,X_s)\big] \mathrm{d}X_s
 \\
 &&
 \hspace{40mm}+\int_0^\cdot \sigma(s,X_s,\mu_s)\mathrm{d}W^{\pi,\mu,\beta}_s,
 \e*
thus mimicking the controlled dynamics in \eqref{controlled} when there is no common noise $\sigma^0\equiv 0$.

The representative agent seeks for an optimal mutual holding strategy by maximizing her criterion
 \b*
 J_{\pi,\mu}(\beta)
 :=
 \E^{\P^\beta_{\pi,\mu}}\big[U(X_T)\big]
 &\mbox{over all}&
 \beta\in\Ac,
 \e*
where $U:\R\longrightarrow\R$ is a given non-decreasing utility function.

\begin{Definition}\label{def:MFG}
 A pair $(\pi,\mu)\in\Ac\times\Pc_\Sc$ is a mean field game equilibrium of the mutual holding problem if
\\
{\rm (i)} $\mu\in\Pc_\Sc(\pi)$, i.e. the mean field SDE \eqref{MF-SDE} has a solution with law $\mu$.
\\
{\rm (ii)} $J_{\pi,\mu}(\pi)\ge J_{\pi,\mu}(\beta)$ for all $\beta\in\Ac$, i.e. $\pi$ is an optimal response for the representative agent mutual holding problem.
\end{Definition}

\medskip

\begin{Remark} \label{rm:definition}
{\rm
(i) In our definition $\pi$ represents the optimal control, and $\mu$ the 
equilibrium distribution of the optimal process controlled by $\pi.$ Due to the potential singularity of the coefficients of the SDE in our problem, see the equilibrium dynamics in  Theorem \ref{thm:main}, note that uniqueness is in general not guaranteed and so $\pi$ does not determine $\mu$. Conversely, the knowledge of $\mu$ does not determine $\pi.$ This is why our definition of equilibrium involves the pair $(\pi,\mu)$.

\noindent (ii) Condition {\rm (i)} of Definition \ref{def:MFG} is the analogue of the consistency condition used in the MFG literature, see \citeauthor*{carmona2018probabilisticI} \cite{carmona2018probabilisticI}, while 
Condition {\rm (ii)} is the usual representative agent optimality condition. 

\noindent (iii) Notice that  the representative agent control $\beta$ acts only on the drift coefficient, and that the volatility is not controlled. This is an unusual feature in comparison with standard portfolio optimization problems. Remarkably, the volatility control was present in the $N-$agents microscopic formulation of Section \ref{sec:mutualholding}, and 
has disappeared in the mean field formulation due to the absence of common noise. However, even though the representative agent only controls the drift, notice that the optimal control impacts the volatility through the 
equilibrium mutual holding $\pi$. The case with common noise generates many additional difficulties such as controlling volatility, a situation that we leave for future research. 
}
\end{Remark}

\section{Main results}
\label{sec:mainresults}

This section provides the characterization of a solution of the mean field game of mutual holding. We first state our conditions on the coefficients of the SDE defining the dynamics of the idiosyncratic risk process
 \b*
 b,\sigma:~[0,T]\times\R\times\Pc_2(\R)\longrightarrow \R,
 \e*
where $\Pc_2(\R)$ is the collection of all square integrable laws on $\R$, endowed with the 2-Wasserstein distance
 \b*
 \Wc_2(m,m')^2
 :=
 \inf_{\gamma\in\Pi(m,m')}\int_{\R^2} |x-x'|^2\gamma(\mathrm{d}x,\mathrm{d}x'),
 &m,m'\in\Pc_2(\R),&
 \e*
with $\Pi(m,m')$ the collection of all probability measures on $\R^2$ with marginals $m$  and $m'$.

\begin{Assumption}\label{assum:bsigma}
The coefficients $b$ and $\sigma$ are Borel measurable, Lipschitz in $m,$ 
with quadratic growth in $(x,m)$, uniformly in $t\in[0,T]$, and $\sigma$ is bounded from below away from zero. Moreover,
\vspace{-4mm}\begin{itemize}

\item[\rm (i)] either for each $m,$ $b(t,x,m) < 0,$ for a.e. $(t,x) \in [0,T] \x\R$,

{\color{black}\item[\rm (ii)]or for any $\eta \in \R_+$ and $m \in \Pc(\R),$ for Lebesgue--a.e. $t \in [0,T],$ the Borel set
\begin{align*}
    \ell (t,m,\eta)
    :=
    \big\{
        x \in \R:(x',m') \longmapsto\mathbf{1}_{\{b(t,x',m') + \eta \ge 0 \}}\mbox{ is continuous at the point } (x,m)
    \big\}
\end{align*}
has full Lebesgue measure i.e. its complement is Lebesgue--negligible .
}
\end{itemize}

\end{Assumption}
{\color{black}
\begin{Remark}
{\rm
Condition ${\rm (ii)}$ holds true when the drift of the provisions process $b$ is non--negative. It also holds true when the map $x \longmapsto b(t,x, m)$ is continuous and has Lebesgue--negligible negative level sets, for all $(t,m) \in [0,T] \x \Pc(\R)$. This includes the case of affine drift as for the Ornstein–Uhlenbeck process.
}
\end{Remark}
}
{\color{black}
\subsection{An explicit solution of the MFG of mutual holding}
}
\begin{Theorem}\label{thm:main}
Let Assumption \ref{assum:bsigma} hold true, and let the agent's criterion $U$ be non-decreasing Lipschitz. Then, there exists a solution $(\pi^\star,\mu)$ for the mean field game of mutual holding, with 
\begin{itemize}
\item[{\rm (i)}] equilibrium dynamics of the state defined by $\big(B^\mu,\Sigma^\mu\big)(t,x)=(B,\Sigma)(t,x,\mu_t)$:
\b*
B
:=
\frac12(b+c)^+-(b+c)^-, 
&\mbox{and}&
\Sigma
:=
\Big(1-\frac12\1_{\{b+c\ge 0\}}\Big)\sigma,
\e*
where $c(t,m)\ge 0$ is the unique solution of the equation 
\begin{align} \label{eq:charac_c}
    c=\frac12\int_\R \big(c+b(t,y,m)\big)^+m(\mathrm{d}y).
\end{align}
\item[{\rm (ii)}] optimal mutual holding map $\pi^*(t,x,y)
 =
 \1_{\{B^\mu(t,y)\ge 0\}},$ $t\in[0,T],$ $x,y\in\R$,
\item[{\rm (iii)}] and equilibrium value given by $J_{\pi^*,\mu}(\pi^*)=\E^{\P}\big[U(X_T)\big]=\widehat\E^{\mu}\big[U(\widehat X_T)\big]$.
\end{itemize}
\end{Theorem}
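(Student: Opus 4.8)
The plan is a guess-and-verify argument: postulate the equilibrium dynamics $(B,\Sigma)(t,x,m)$ of the statement and verify that the induced pair $(\pi^\star,\mu)$ satisfies Definition~\ref{def:MFG}. First I would make sense of the threshold $c$: for fixed $(t,m)$ the map $c\mapsto\tfrac12\int_\R(c+b(t,y,m))^+\,m(\mathrm{d}y)$ is non-decreasing, $\tfrac12$-Lipschitz and sends $[0,\infty)$ into itself, so the Banach fixed point theorem produces a unique $c(t,m)\ge0$ solving \eqref{eq:charac_c}, and Borel measurability of $b$ in $(t,x)$ together with its Lipschitz dependence on $m$ yields joint measurability of $(t,m)\mapsto c(t,m)$ and continuity in $m$. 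I then set $(B,\Sigma)(t,x,m)$ as in the statement, noting $B\ge0\iff b+c\ge0$, that $\Sigma\ge\tfrac12\sigma$ is uniformly elliptic of quadratic growth, and that $B$ and $\Sigma$ are discontinuous in $x$ only through $\mathbf{1}_{\{b(t,\cdot,m)+c(t,m)\ge0\}}$ --- precisely the object controlled by Assumption~\ref{assum:bsigma}(i)/(ii) (with $\eta=c(t,m)$ in case~(ii)). Invoking the existence theorem for mean field SDEs with irregular coefficients of Section~\ref{sec:SDE}, whose hypotheses hold under Assumption~\ref{assum:bsigma}, I obtain a weak solution $X$ of $\mathrm{d}X_t=B(t,X_t,\mu_t)\mathrm{d}t+\Sigma(t,X_t,\mu_t)\mathrm{d}W_t$ with $\mathrm{Law}(X_0)=\nu$ and $\mu:=\mathrm{Law}(X)\in\Pc_\Sc$; by Section~\ref{sec:density} each $\mu_t$ admits a density, which is needed to neglect the discontinuity set and to read \eqref{identification} as a genuine pointwise identity.

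Next I would check the consistency condition Definition~\ref{def:MFG}(i) together with parts~(i)--(ii) of the statement. Set $\pi^\star(t,x,y):=\mathbf{1}_{\{B^\mu(t,y)\ge0\}}$ with $B^\mu(t,y):=B(t,y,\mu_t)$; as $\pi^\star(t,x,y)$ is independent of $x$, one computes $\int_\R\pi^\star(t,y,x)\mu_t(\mathrm{d}y)=\mathbf{1}_{\{B^\mu(t,x)\ge0\}}$ and $\int_\R\pi^\star(t,x,y)B^\mu(t,y)\mu_t(\mathrm{d}y)=\int_\R(B^\mu(t,y))^+\mu_t(\mathrm{d}y)$. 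Plugging $(B,\Sigma)(t,\cdot,\mu_t)$ into \eqref{identification}, the $\Sigma$--equation holds automatically (since $1/(1+\mathbf{1}_{\{B^\mu\ge0\}})=1-\tfrac12\mathbf{1}_{\{b+c\ge0\}}$), while the $B$--equation reduces to the single requirement $\int_\R(B^\mu(t,y))^+\mu_t(\mathrm{d}y)=c(t,\mu_t)$; since $(B^\mu)^+=\tfrac12(b(\cdot,\cdot,\mu_\cdot)+c(\cdot,\mu_\cdot))^+$ this is exactly \eqref{eq:charac_c} at $m=\mu_t$, hence holds. Therefore $\mu\in\Pc_\Sc(\pi^\star)$, i.e. $X=X^{\pi^\star,\pi^\star}$ has law $\mu$, which is Definition~\ref{def:MFG}(i) and gives parts~(i)--(ii).

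For the optimality condition Definition~\ref{def:MFG}(ii), I take any $\beta\in\Ac$. Under $\P^\beta_{\pi^\star,\mu}$ the state solves $\mathrm{d}X_s=\big(b(s,X_s,\mu_s)+\widehat\E^\mu[\beta(s,X_s,\widehat X_s)B^\mu(s,\widehat X_s)]\big)\big/\big(1+\mathbf{1}_{\{B^\mu(s,X_s)\ge0\}}\big)\,\mathrm{d}s+\Sigma^\mu(s,X_s)\,\mathrm{d}W^{\pi^\star,\mu,\beta}_s$, and since $0\le\beta\le1$ forces $\beta(s,X_s,y)B^\mu(s,y)\le(B^\mu(s,y))^+$ pointwise, the drift is dominated by $\big(b(s,X_s,\mu_s)+c(s,\mu_s)\big)\big/\big(1+\mathbf{1}_{\{B^\mu(s,X_s)\ge0\}}\big)=B^\mu(s,X_s)$, with equality when $\beta=\pi^\star$. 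As $X^{\beta,\pi^\star}$ and $X^{\pi^\star,\pi^\star}$ solve one-dimensional SDEs with the same uniformly elliptic diffusion coefficient $\Sigma^\mu$ and ordered drifts --- the discontinuity sets being Lebesgue-negligible, hence of zero occupation measure by the density property --- a comparison principle gives that $X^{\pi^\star,\pi^\star}_T$ stochastically dominates $X^{\beta,\pi^\star}_T$, so $J_{\pi^\star,\mu}(\beta)\le J_{\pi^\star,\mu}(\pi^\star)$ since $U$ is non-decreasing. Part~(iii) follows at once: $\psi^{\pi^\star}_{\pi^\star,\mu}\equiv0$, so $\P^{\pi^\star}_{\pi^\star,\mu}=\P$ and $J_{\pi^\star,\mu}(\pi^\star)=\E^\P[U(X_T)]=\widehat\E^\mu[U(\widehat X_T)]$ by the consistency step.

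The hard part is the construction of $\mu$: the diffusion coefficient $\Sigma$ is genuinely discontinuous --- it jumps by a factor two across $\{b+c=0\}$ --- so the McKean--Vlasov SDE lies outside classical wellposedness theory. I would build $\mu$ as the limit of a uniformly elliptic particle system (or of a system with mollified coefficients), use uniform ellipticity for tightness and Aronson-type heat-kernel bounds, invoke Assumption~\ref{assum:bsigma} to show the limiting occupation measure does not charge the discontinuity set so that the coefficients pass to the limit, and extract the density of $\mu_t$ en route --- this is the content of Sections~\ref{sec:SDE} and~\ref{sec:density}. A secondary difficulty is that the comparison principle used for optimality must itself tolerate the discontinuous $\Sigma^\mu$, which again rests on uniform ellipticity and the negligibility of the discontinuity set.
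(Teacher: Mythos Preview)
Your guess-and-verify structure, the construction of $c$, the consistency check that $(B,\Sigma)$ solves \eqref{identification} under $\pi^\star$, and the appeal to Sections~\ref{sec:SDE}--\ref{sec:density} for the existence and density of $\mu$ all match the paper's route. The genuine divergence is in the optimality step.

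You argue optimality by a comparison principle: since the drift under $\P^\beta_{\pi^\star,\mu}$ is pointwise dominated by $B^\mu$ while the diffusion $\Sigma^\mu$ is common, the terminal law under $\P^\beta_{\pi^\star,\mu}$ should be stochastically dominated by that under $\P$, and monotone $U$ finishes. The paper does \emph{not} do this. Instead it runs a BSDE verification (Section~\ref{sec:proofmfg}): it writes the Hamiltonian $H$, introduces the BSDE \eqref{BSDE} with generator $F$, and the crux is Lemma~\ref{lem:BSDE} showing $Z\ge0$. Once $Z\ge0$, the generator vanishes, $Y_t=\E^\P[U(X_T)\mid\Fc_t]$, and Proposition~\ref{prop:optimization} is the standard verification inequality $J_{\pi^\star,\mu}(\beta)\le Y_0$ obtained by localizing and using the definition of $H$ as a supremum.

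The two approaches are morally related: $Z\ge0$ encodes precisely that $x\mapsto\E^\P[U(X_T^{t,x})]$ is nondecreasing, which is the infinitesimal shadow of your stochastic dominance. But your route has a real gap you only flag: a comparison theorem for one-dimensional SDEs with \emph{discontinuous} $\Sigma^\mu$ (a factor-two jump across $\{b+c=0\}$) and merely measurable drifts is not standard---Ikeda--Watanabe pathwise comparison needs a Yamada--Watanabe modulus on the diffusion, and comparison in law needs at least weak uniqueness, which with unbounded, quadratic-growth, discontinuous coefficients is not a one-line citation. The paper circumvents this entirely: to prove $Z\ge0$ it first treats the case where $\Sigma$ is smooth in $x$ via the Clark--Ocone representation $\overline Z_t=\E[U'(X_T^{t,x})J_T^{t,x}]\Sigma\ge0$ (tangent process $J\ge0$, $U'\ge0$), and then passes to the general $\Sigma$ by approximating with smooth $\Sigma^n$, using $\Wc_2$-convergence of the laws, and a careful limit in the identity $\E[\int_0^T \overline Z_t\varphi_t\,\mathrm{d}t]\ge0$ for nonnegative test processes $\varphi$. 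This approximation-plus-representation argument is the substantive technical content you would need to supply to make your comparison route rigorous; as written, ``rests on uniform ellipticity and the negligibility of the discontinuity set'' is not enough.
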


\begin{Remark} {\rm
{\rm  (i)} At first sight, the reader may think that $\pi^\star$ is entirely determined by $\mu.$ But, as mentioned in Remark \ref{rm:definition}, this 
is not the case. To see this, notice that the expression \eqref{identification} of the drift at the equilibrium $B$ involves the optimal control $\pi^\star$. Therefore, through its dependence on $B$, $\mu$ and $\pi^\star$ are closely related, consequently knowing one does not lead to knowing the 
other.

 {(ii)} The optimal control in Theorem \ref{thm:main} (ii) is bang-bang, and consists in investing only when $b(t,X_t,\mu_t)$ is above the threshold $-c(t,\mu_t)$, where $X$ represents the equilibrium state of the mean field game of mutual holding, and $c$ solves the equation mentioned in the theorem. Notice that this equilibrium behavior reduces the volatility by half on the region where the drift is positive. This may be interpreted 
as a stabilization effect due to risk diversification.
        
}
\end{Remark}

The proof of items (ii) and (iii) of this result is reported in the following section. For the sake of clarity, we justify at the end of the present section the existence of the function $c(t,m)$ satisfying \eqref{eq:charac_c}. Before this, we provide the justification of the equilibrium dynamics in (i) as a consequence of the optimal mutual holding strategy $\pi^*$. Plugging the expression of $\pi^*$ in \eqref{identification}, we see that
 \be\label{Bmu*}
 B^\mu(t,x)
 &=&
 \frac{b(t,x,\mu_t)+\int_{\R} B^\mu(t,y)^+\mu_t(\mathrm{d}y)}
        {1+\1_{\{B^\mu(t,x)\ge 0\}}}
 \ee
Multiplying both sides by $\1_{\{B^\mu(t,x)\ge 0\}}$, yields
 $$
 2B^\mu(t,x)\1_{\{B^\mu(t,x)\ge 0\}}
 =
 \1_{\{B^\mu(t,x)\ge 0\}}\Big[b(t,x,\mu_t)+\int_{\R} B^\mu(t,y)^+\mu_t(\mathrm{d}y)\Big],
 $$
which implies by integration with respect to $\mu_t$ and the fact that $\{x:\; (b+c)(t,x,\mu_t) \ge 0 \}=\{x:\; B^\mu(t,x) \ge 0 \},$ 
\b*
\int\!\! \1_{\{B^\mu(t,y)\ge 0\}}B^\mu(t,y)\mu_t(\mathrm{d}y)
&\!\!\!\!\!\!=&\!\!\!\!\!\!
\frac{\int_{\R} \!\1_{\{B^\mu(t,y)\ge 0\}}b(t,y,\mu_t)\mu_t(\mathrm{d}y)}{2-\int \!\1_{\{B^\mu(t,y)\ge 0\}}\mu_t(\mathrm{d}y)}
\\
&\!\!\!\!\!\!=&\!\!\!\!\!\!
\frac{\int (b\!+\!c)^+\!(t,y,\mu_t)\mu_t(\mathrm{d}y) \!-\! c(t,\mu_t)\!\int\! \1_{\{(b+c)(t,y,\mu_t)\ge 0\}}\mu_t(\mathrm{d}y)}{2-\int \!\1_{\{(b+c)(t,y,\mu_t)\ge 0\}}\mu_t(\mathrm{d}y)}
\\
&=&
\frac{2c(t,\mu_t) \!-\! c(t,\mu_t)\!\int\! \1_{\{(b+c)(t,y,\mu_t)\ge 0\}}\mu_t(\mathrm{d}y)}{2-\int \!\1_{\{(b+c)(t,y,\mu_t)\ge 0\}}\mu_t(\mathrm{d}y)}
\;=\;
c(t,\mu_t),
\e*
by the definition of $c(t,\mu_t)$. Substituting in \eqref{Bmu*}, we get $B^\mu(t,x)=(1+\1_{\{B^\mu\ge 0\}})^{-1}(b+c)(t,x,\mu_t)$, and this leads to the MFG equilibrium mean field SDE
 \be\label{MFG-SDE} 
 \mathrm{d}X_t
 &=&
 B(t,X_t,\mu_t)\mathrm{d}t+\Sigma(t,X_t,\mu_t)\mathrm{d}W_t,
 \ee
with $B$ and $\Sigma$ as in Theorem \ref{thm:main} (i).

Notice that the drift coefficient of the last SDE is continuous across the boundary $\{B=0\}$, while the diffusion coefficient is not continuous 
across this boundary. For this reason, we shall introduce a notion of weak solution for a general class of McKean-Vlasov SDEs which covers our setting. The following result states an existence result for the last SDE which holds despite the discontinuity of the diffusion coefficient.

\begin{Theorem}\label{thm:SDE}
Under Assumption \ref{assum:bsigma}, the mean field SDE \eqref{MFG-SDE} has at least one square integrable weak solution in the sense of Definition \ref{def:SDE}.
\end{Theorem}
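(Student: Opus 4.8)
The plan is to reduce the mean field SDE \eqref{MFG-SDE} to a fixed-point problem on the space of measure flows and to invoke the general existence result for McKean--Vlasov SDEs with irregular coefficients (Theorem stated in Section \ref{sec:SDE}), whose hypotheses we must verify for the coefficients $(B,\Sigma)$ displayed in Theorem \ref{thm:main}(i). First I would observe that, by Assumption \ref{assum:bsigma}, $\sigma$ is bounded below away from zero and has quadratic growth, and $c(t,m)\ge 0$ is well defined (this is justified at the end of Section \ref{sec:mainresults}); hence $B=\tfrac12(b+c)^+-(b+c)^-$ and $\Sigma=(1-\tfrac12\1_{\{b+c\ge 0\}})\sigma$ inherit quadratic growth in $(x,m)$ and, crucially, $\Sigma$ is still uniformly elliptic since $1-\tfrac12\1_{\{b+c\ge0\}}\in\{\tfrac12,1\}$ so $\Sigma^2\ge\tfrac14\sigma^2$ is bounded away from zero. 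The drift $B$ is continuous across the free boundary $\{b+c=0\}$ (both one-sided limits vanish), so the only genuine irregularity is the jump of $\Sigma$ across that boundary.

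Next I would check the continuity-in-measure hypothesis that makes the fixed-point argument go through. Here Assumption \ref{assum:bsigma}(i)-(ii) enters decisively: under (ii), for Lebesgue-a.e. $t$ the set of $x$ at which $(x',m')\mapsto\1_{\{b(t,x',m')+\eta\ge0\}}$ is continuous has full measure, for every $\eta\ge0$; applying this with $\eta=c(t,m)$ (and using continuity of $m\mapsto c(t,m)$, which follows from the defining equation \eqref{eq:charac_c} and Lipschitzness of $b$ in $m$) shows that the discontinuity set of $(x,m)\mapsto\Sigma(t,x,m)$ is, slicewise in $t$, Lebesgue-negligible in $x$. This is exactly the structural condition required to apply the general weak-existence theorem of Section \ref{sec:SDE}: uniform ellipticity plus a diffusion coefficient whose discontinuities are a Lebesgue-null set along the occupation measure of any candidate solution. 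Under alternative (i), $b<0$ a.e. forces $c$ and $b+c$ to have controlled sign structure and the argument is even simpler.

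The technical heart is therefore the verification that the occupation measure $\mathrm{d}t\otimes\mu_t(\mathrm{d}x)$ of any solution does not charge the discontinuity set of $\Sigma$; this is where I expect the main obstacle to lie, and it is precisely what Section \ref{sec:density} supplies: by uniform ellipticity, the time-marginals $\mu_t$ of the (prelimit particle system or candidate) solution admit densities for a.e. $t$, so $\mu_t$ is absolutely continuous and the null-in-$x$ discontinuity set from the previous paragraph is $\mu_t$-null for a.e. $t$. With this in hand, the diffusion coefficient is continuous $\mathrm{d}t\otimes\mu_t$-a.e. along the solution, and one can run the standard Skorokhod-type compactness and tightness argument: construct approximating SDEs with mollified coefficients, obtain tightness of the laws (uniform moment bounds from quadratic growth plus $\nu\in\Pc_p$, $p>2$), extract a weakly convergent subsequence, and identify the limit as a weak solution of \eqref{MFG-SDE} in the sense of Definition \ref{def:SDE} using the continuity points to pass to the limit in the martingale problem. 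Finally I would note that uniqueness is not claimed and not needed; existence of one square-integrable weak solution is all that Theorem \ref{thm:SDE} asserts, and it follows from the general theorem once the above hypotheses are confirmed.
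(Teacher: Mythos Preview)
Your plan is essentially the paper's own: Theorem \ref{thm:SDE} is stated as a direct consequence of the general existence result Theorem \ref{thm:weak_existence}, and your proposal correctly identifies and verifies the relevant hypotheses (uniform ellipticity of $\Sigma$, Lipschitz dependence on $m$ of the regular factors via Lemma \ref{lem:cvarphi}, and the Lebesgue-negligible discontinuity set from Assumption \ref{assum:bsigma}(ii)) together with the density argument of Section \ref{sec:density}. Two minor differences: the paper's approximation scheme in the proof of Theorem \ref{thm:weak_existence} is an Euler time-discretization of the coefficients rather than the spatial mollification you suggest, and the decomposition $\Sigma=\Gamma\cdot\Phi\circ\gamma$ required by Assumption \ref{assum:SDE} should be made explicit (take $\Gamma=\sigma$, $\gamma=b+c$, $\Phi(y)=1-\tfrac12\1_{\{y\ge0\}}$, so that Lipschitzness in $m$ of $\gamma$ follows from Lemma \ref{lem:cvarphi}); otherwise your outline matches the paper.
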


This result is a direct consequence of the more general wellposedness result, Theorem \ref{thm:weak_existence}, reported in Section \ref{sec:SDE} below. {\color{black}Given this result, we now report the following consequence of the verification argument in Proposition \ref{prop:optimization}, which is one of the main steps of the proof of Theorem \ref{thm:main}.

\begin{Corollary} \label{cor:MFG_sol}
Any weak solution $\mu$ of \eqref{MFG-SDE} is a solution of the mean field game of mutual holding with optimal control $\pi^\star(t,x,y):=\mathbf{1}_{\{B(t,y,\mu_t) \ge 0\}}$, for all $(t,x,y)\in[0,T] \x \R \x \R$.
\end{Corollary}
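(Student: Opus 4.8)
The plan is to verify, for an arbitrary weak solution $\mu$ of \eqref{MFG-SDE}, the two defining conditions of a mean field game equilibrium in Definition \ref{def:MFG} with the candidate pair $(\pi^\star,\mu)$, where $\pi^\star(t,x,y):=\mathbf{1}_{\{B(t,y,\mu_t)\ge 0\}}$. First I would check the consistency condition (i), i.e. $\mu\in\Pc_\Sc(\pi)$. The point is that $\mu$ solves the SDE \eqref{MFG-SDE} with the coefficients $(B,\Sigma)$ of Theorem \ref{thm:main}(i), and the computation already carried out in the excerpt (running from \eqref{Bmu*} to \eqref{MFG-SDE}) shows in reverse that these coefficients are precisely the ones obtained from the identification \eqref{identification} when $\pi=\pi^\star$. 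Concretely, with $\pi^\star(t,y,x)=\mathbf{1}_{\{B^\mu(t,x)\ge 0\}}$ one has $1+\int_\R\pi^\star(t,y,x)\mu_t(\mathrm{d}y)=1+\mathbf{1}_{\{B^\mu(t,x)\ge0\}}$, and $\int_\R\pi^\star(t,x,y)B^\mu(t,y)\mu_t(\mathrm{d}y)=\int_\R B^\mu(t,y)^+\mu_t(\mathrm{d}y)=c(t,\mu_t)$ by the fixed-point identity established right after \eqref{Bmu*}; substituting these into \eqref{identification} recovers $B^\mu=B(\cdot,\cdot,\mu_\cdot)$ and $\Sigma^\mu=\Sigma(\cdot,\cdot,\mu_\cdot)$ exactly. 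Hence $\mu$ is a weak solution of \eqref{MF-SDE} with $\pi=\pi^\star$, which is condition (i).

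Next I would establish the optimality condition (ii), namely $J_{\pi^\star,\mu}(\pi^\star)\ge J_{\pi^\star,\mu}(\beta)$ for every $\beta\in\Ac$. This is where I would invoke the verification argument of Proposition \ref{prop:optimization} (referenced in the excerpt as one of the main steps of the proof of Theorem \ref{thm:main}): the representative agent's deviation to $\beta$ changes only the drift, via the Girsanov density driven by $\psi^\beta_{\pi^\star,\mu}$ of \eqref{psi}, so that under $\P^\beta_{\pi^\star,\mu}$ the state $X$ has drift $b(t,X_t,\mu_t)+\widehat\E^\mu[\beta(t,X_t,\widehat X_t)B^\mu(t,\widehat X_t)]$ divided by $1+\widehat\E^\mu[\pi^\star(t,\widehat X_t,X_t)]$. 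Because $U$ is non-decreasing, maximizing $\E^{\P^\beta_{\pi^\star,\mu}}[U(X_T)]$ amounts to making the drift as large as possible pathwise, and since $B^\mu(t,\widehat X_t)=B(t,\widehat X_t,\mu_t)$ is already pinned down, the map $y\mapsto\beta(t,x,y)B^\mu(t,y)$ is pointwise maximized over $\beta(t,x,y)\in[0,1]$ by taking $\beta(t,x,y)=\mathbf{1}_{\{B^\mu(t,y)\ge0\}}=\pi^\star(t,x,y)$; the denominator $1+\widehat\E^\mu[\pi^\star(t,\widehat X_t,X_t)]$ does not depend on $\beta$. This pointwise comparison, combined with a stochastic-comparison/monotonicity statement for the terminal value $X_T$ (the content of Proposition \ref{prop:optimization}), yields $\E^{\P^\beta_{\pi^\star,\mu}}[U(X_T)]\le\E^{\P^{\pi^\star}_{\pi^\star,\mu}}[U(X_T)]=\widehat\E^\mu[U(\widehat X_T)]$, which is (ii) and incidentally also gives item (iii) of Theorem \ref{thm:main}.

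The main obstacle is the optimality step, and it is genuinely delicate for two reasons. First, the denominator $1+\widehat\E^\mu[\pi^\star(t,\widehat X_t,X_t)]=1+\mathbf{1}_{\{B(t,X_t,\mu_t)\ge0\}}$ depends on $X_t$ in a discontinuous way, so one must check that this denominator is unaffected by the deviation $\beta$ (it is, since the exogenous-looking holding $\pi$ in the second sum of \eqref{controlled} is fixed at $\pi^\star$ in the definition of $J_{\pi,\mu}$) and then argue the comparison despite the non-Lipschitz, discontinuous diffusion coefficient $\Sigma$. Second, the pathwise "larger drift implies larger terminal value" argument cannot rely on a classical comparison theorem because of this irregularity; one must instead use the specific structure — e.g. that under any $\beta$ the drift is bounded above by $B(t,X_t,\mu_t)$ and invoke a comparison principle tailored to the uniformly elliptic but discontinuous setting, or pass through the density/Krylov-type estimates of Section \ref{sec:density}. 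I would lean on Proposition \ref{prop:optimization} to discharge exactly this point, since the corollary is explicitly stated as a consequence of that verification argument; the remaining work here is simply to observe that a weak solution of \eqref{MFG-SDE} feeds the hypotheses of that proposition and that $\pi^\star$ as defined is the maximizer it produces.
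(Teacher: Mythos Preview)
Your overall structure is right: verify the consistency condition (i) of Definition~\ref{def:MFG} by reversing the computation \eqref{Bmu*}--\eqref{MFG-SDE}, and then invoke Proposition~\ref{prop:optimization} for the optimality condition (ii). That is exactly what the paper does, and your treatment of (i) is fine.

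Where you diverge is in your description of the \emph{mechanism} behind Proposition~\ref{prop:optimization}. You characterize it as a ``stochastic-comparison/monotonicity statement for the terminal value $X_T$'' and worry about needing a comparison principle for SDEs with discontinuous diffusion, or Krylov-type estimates from Section~\ref{sec:density}. None of this is used. The paper's verification argument is a BSDE argument: one introduces the Hamiltonian $H$ and the associated backward SDE \eqref{BSDE0}--\eqref{BSDE}, and Lemma~\ref{lem:BSDE} shows that its unique solution $(Y,Z)$ satisfies $Y_t=\E^\P[U(X_T)\,|\,\Fc_t]$ and, crucially, $Z\ge 0$. The non-negativity of $Z$ is the precise incarnation of the ``monotonicity'' you are reaching for; it is proved via the Clark--Ocone representation (tangent process positivity, $U$ non-decreasing) in the smooth case and by a stability/approximation argument in general. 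Once $Z\ge 0$, the Hamiltonian maximizer is $\widehat\beta(t,y,m,z)=\mathbf{1}_{\{zB(t,y,m)\ge 0\}}=\mathbf{1}_{\{B(t,y,\mu_t)\ge 0\}}=\pi^\star$, and the standard BSDE verification computation yields $J_{\pi^\star,\mu}(\beta)\le Y_0=\E^\P[U(X_T)]=J_{\pi^\star,\mu}(\pi^\star)$ directly, with a localization to handle integrability.

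So your concern that ``larger drift implies larger terminal value'' cannot be justified by a classical SDE comparison theorem is well-founded, but the resolution is not a bespoke comparison principle in the discontinuous setting: it is to replace the pathwise comparison entirely by the BSDE $Z\ge 0$ argument, which works without any regularity of $\Sigma$ beyond what is needed for the BSDE wellposedness.
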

}

\begin{Remark}
{\rm Let us examine the case when $b$ has a constant sign. 
\begin{itemize}
\item Let $b(\cdot,m) \ge 0,$ a.e. on $[0,T] \x \R$, for all $m$. then we compute that $c(t,m)=\int_\R b(t,y,m) m(\mathrm{d}y),$ $B=\frac{1}{2}(b+c)$, $\Sigma=\frac12\sigma$, and Theorem \ref{thm:weak_existence} applies.
\item Let $b(\cdot,m)< 0,$ a.e. on $[0,T] \x \R$,  for all $m$. Then, we immediately see that $c(t,m)=0$, so that $B=b$, and $\Sigma=\big(1-\frac12\1_{\{b\ge 0\}}\big)\sigma.$  Thanks to Theorem \ref{thm:weak_existence}, we define $\mu$ the distribution of the SDE solution associated to the coefficients $(b,\sigma),$ we 
can check that $\sigma=\big(1-\frac12\1_{\{b\ge 0\}}\big)\sigma,$ $\mu_t(\mathrm{d}x)\mathrm{d}t$--a.e. 
\end{itemize}}
\end{Remark}

We conclude this section with the justification of the existence and uniqueness of the  function $c:\R_+\times\Pc_2(\R)\longrightarrow\R$ in a general situation.

\begin{Lemma}\label{lem:cvarphi}
Let $\varphi:\R\times\Pc_2(\R)\longrightarrow\R$ be a function such that $\varphi(.,m)$ is $m$--integrable for all $m \in \Pc_2(\R).$ Then, there is a unique 
\b*
c^\varphi:\Pc_2(\R)\longrightarrow\R_+
&\mbox{such that}&
c^\varphi=\frac12\int_\R \big(c^\varphi+\varphi(x,m)\big)^+m(\mathrm{d}x).
\e* 
Moreover, $c^\varphi\le 2\int \varphi^+(x,m)m(\mathrm{d}x)$, and if in addition $\varphi$ is Lipschitz in $(x,m)$, then $c^\varphi$ is Lipschitz in $m$.
\end{Lemma}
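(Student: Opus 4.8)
The plan is to solve the fixed--point equation pointwise in $m$ by a contraction argument, and then upgrade the pointwise solution to a Lipschitz map in $m$. Fix $m\in\Pc_2(\R)$ and define $F_m:[0,\infty)\to\R$ by $F_m(c):=\tfrac12\int_\R\big(c+\varphi(x,m)\big)^+m(\mathrm{d}x)$, which is well defined since $\varphi(\cdot,m)$ is $m$--integrable and hence so is $\big(c+\varphi(\cdot,m)\big)^+$. First I would note that $F_m$ takes nonnegative values, so it maps the complete metric space $[0,\infty)$ into itself. Since $a\mapsto a^+$ is $1$--Lipschitz, for $c,c'\ge0$ one has $\big|(c+\varphi(x,m))^+-(c'+\varphi(x,m))^+\big|\le|c-c'|$, and integrating against the probability measure $m$ gives $|F_m(c)-F_m(c')|\le\tfrac12|c-c'|$. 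Thus $F_m$ is a $\tfrac12$--contraction on $[0,\infty)$, and the Banach fixed--point theorem yields a unique $c^\varphi(m)\in[0,\infty)$ with $c^\varphi(m)=F_m(c^\varphi(m))$, which is exactly the claimed existence and uniqueness.

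For the a priori bound, I would plug the fixed point into its own equation and use the elementary inequality $(c+a)^+\le c+a^+$ (valid for $c\ge0$) together with $m(\R)=1$, obtaining $c^\varphi(m)=\tfrac12\int_\R\big(c^\varphi(m)+\varphi(x,m)\big)^+m(\mathrm{d}x)\le\tfrac12 c^\varphi(m)+\tfrac12\int_\R\varphi^+(x,m)m(\mathrm{d}x)$, hence $c^\varphi(m)\le\int_\R\varphi^+(x,m)m(\mathrm{d}x)\le 2\int_\R\varphi^+(x,m)m(\mathrm{d}x)$, which is even stronger than the stated estimate.

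For the Lipschitz dependence on $m$, assume $\varphi$ is Lipschitz in $(x,m)$ with constant $L$, i.e. $|\varphi(x,m)-\varphi(x',m')|\le L\big(|x-x'|+\Wc_2(m,m')\big)$. Fix $m,m'\in\Pc_2(\R)$ and let $\gamma$ be an optimal coupling for $\Wc_2(m,m')$. Rewriting the two fixed--point identities as integrals against $\gamma$ via its marginals, subtracting, and using once more that $a\mapsto a^+$ is $1$--Lipschitz, I get $|c^\varphi(m)-c^\varphi(m')|\le\tfrac12\int\big|c^\varphi(m)-c^\varphi(m')+\varphi(x,m)-\varphi(x',m')\big|\,\gamma(\mathrm{d}x,\mathrm{d}x')\le\tfrac12|c^\varphi(m)-c^\varphi(m')|+\tfrac12 L\big(\int|x-x'|\,\gamma(\mathrm{d}x,\mathrm{d}x')+\Wc_2(m,m')\big)$. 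Since $\int|x-x'|\,\gamma(\mathrm{d}x,\mathrm{d}x')\le\Wc_2(m,m')$ by Cauchy--Schwarz (or Jensen), absorbing the first term on the left yields $|c^\varphi(m)-c^\varphi(m')|\le 2L\,\Wc_2(m,m')$.

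The argument is essentially routine. The only step requiring genuine care is the last one: one must pass from two pointwise fixed--point equations written against $m$ and $m'$ respectively to a single inequality, which is done by lifting both to a common optimal coupling before invoking the $\tfrac12$--contraction property. (Measurability of $m\mapsto c^\varphi(m)$, needed for it to be a bona fide function into $\R_+$, follows from its continuity, or more elementarily from $c^\varphi(m)=\lim_{n}F_m^{\circ n}(0)$ being a pointwise limit of measurable maps.)
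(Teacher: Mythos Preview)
Your proof is correct. The existence--uniqueness step differs mildly from the paper: you show that $c\mapsto\tfrac12\int(c+\varphi)^+\,\mathrm{d}m$ is a $\tfrac12$--contraction on $[0,\infty)$ and invoke Banach's fixed--point theorem, whereas the paper shows that $F(c):=c-\tfrac12\int(c+\varphi)^+\,\mathrm{d}m$ is strictly increasing (with $F'\ge\tfrac12$), negative at $0$, and tends to $+\infty$, hence has a unique root. These are two sides of the same coin, and your contraction constant $\tfrac12$ is exactly the paper's lower bound on $F'$. Your approach has the minor advantage that the same $\tfrac12$--Lipschitz estimate is reused verbatim in the Lipschitz--in--$m$ step, so no separate computation is needed there. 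The a~priori bound and the Lipschitz argument via an optimal coupling are essentially identical to the paper's (and, as you note, the argument actually yields $c^\varphi\le\int\varphi^+\,\mathrm{d}m$, which is sharper than the stated bound with the factor~$2$).
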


\proof
Define $F(c):=c-\frac{1}{2} \int \big( c + \varphi(x,m) \big)^+ m(\mathrm{d}x),$ for $c\ge 0$. One has $F(0)=-\frac{1}{2} \int\big(\varphi(x,m) \big)^+ m(\mathrm{d}x) \le 0,$ and $\lim_{c \to \infty} F(c)=\infty.$ 

We can check that $F'(c)=1-\frac{1}{2} \int\mathbf{1}_{c \ge \varphi(x,m) }m(\mathrm{d}x),$ then $F'(c) \ge 1/2 >0.$ Consequently, there exists a unique $c(m) \in \R_+$ verifying the required equation $F(c)=0$. 

As, $c(m)
        =
        \frac{1}{2} \int \big(c(m)+\varphi)^+(x,m) m(\mathrm{d}x)
        \le \frac{1}{2} \int(c(m)+ \varphi^+)(x,m) m(\mathrm{d}x)$, it follows that $c(m) \le 2\int\varphi^+(y,m)  m(\mathrm{d}y)$.
 Following the same argument, we see that
     \b*
        |c(m)-c(m')| 
        &\le& 
        2\;\inf_{\gamma \in \Pi(m,m' )} \int_{\R^2} \big| \varphi^+(x,m) - \varphi^+(x',m') \big| \gamma(\mathrm{d}x,\mathrm{d}x').
    \e*
This implies that $c$ inherits from $\varphi$ the required Lipschitz property.
\ep

\subsection{Intuitions from PDE arguments}
\label{sec:intuition}

The mean field game problems are commonly formulated via a system of coupled PDEs. Because of the new form of this model, we provide here such a PDE formulation in order to provide more insights of our problem. 

The characterization through PDEs consists in a backward Hamilton--Jacobi--Bellman (HJB) equation coupled with a forward Fokker--Planck (FP) equation, see \citeauthor*{lasry2007mean} \cite{lasry2007mean}, \citeauthor*{huang2006large} \cite{huang2006large}. In our situation, in addition to the HJB and FP equations, we have an equation which identifies the drift coefficient at the equilibrium. Precisely, solving our MFG problem is reduced to finding the triple $(v,\mu,B)$ such that: $v:[0,T] \x \R \longrightarrow \R,$ $\mu: [0,T]\longrightarrow\Pc_2(\R),$ $B:[0,T] \x \R \x \Pc_2(\R) \longrightarrow\R$ satisfying 
\begin{itemize}
\item the HJB equation: 
\b*
   v(T,x)=U(x),~\mbox{and}~
   0
   &\!\!=&\!\!
   \partial_t v(t,x) + \frac{1}{2} \frac{\sigma^2(t,x,\mu_t) D^2 v(t,x)}
                                                                            {(1 + \int_{\R}\pi(t,y,x) \mu_t(\mathrm{d}y))^2}
   \\
   &\!\!&\!\!+ \frac{D v(t,x) b(t,x,\mu_t) +\int \sup_{\beta\in [0,1]} \beta\;B(t,y,\mu_t)Dv(t,x) \mu_t(\mathrm{d}y)}
               {1 + \int_{\R}\pi(t,y,x) \mu_t(\mathrm{d}y)},
  \\
     &\!\!=&\!\!
   \partial_t v(t,x) + \frac{1}{2} \frac{\sigma^2(t,x,\mu_t) D^2 v(t,x)}
                                                                            {(1 + \int_{\R}\pi(t,y,x) \mu_t(\mathrm{d}y))^2}
   \\
   &\!\!&\!\!+ \frac{D v(t,x) b(t,x,\mu_t) +\int \big(Dv(t,x)B(t,y,\mu_t) \big)^{+} \mu_t(\mathrm{d}y)}
               {1 + \int_{\R}\pi(t,y,x) \mu_t(\mathrm{d}y)}
               ,
\e*
where the supremun is attained at the point $\pi(t,x,y)=\mathbf{1}_{\{Dv(t,x)B(t,y,\mu_t)\;\ge\; 0\}}$, 
\item the FP equation:  
\begin{align*}
\mu_0=\nu,~\mbox{and}~
    \partial_t \mu_t
        &=
        -D \big[ B(t,\cdot,\mu_t) \mu_t \big] 
        +
        \frac{1}{2} D^2 \Big[ \frac{\sigma^2 (t,\cdot,\mu_t)\mu_t}
                                               {(1 + \int_{\R}\pi(t,y,x) \mu_t(\mathrm{d}y))^2}  \Big] ,
\end{align*}
\item and finally the equation identifying the drift at the equilibrium
\begin{align*} 
    B(t,x,\mu_t)=\frac{\int_{\R} \Big( \pi(t,x,y) B(t,y,\mu_t) + b(t,x,\mu_t) \Big) \;\mu_t(\mathrm{d}y)}
                                {1+\int_{\R}\pi(t,y,x) \mu_t(\mathrm{d}y)},
    ~~\mu_t(\mathrm{d}x)\otimes\mathrm{d}t-\mbox{a.e.}
\end{align*}
\end{itemize}
This system goes beyond the classical system appearing in the literature. 
Moreover, the non--local feature of this system makes its study difficult. Our main result of Theorem \ref{thm:main} offers a solution to this system of PDE in some appropriate weak sense.

{\color{black}
\subsection{From the solution of the MFG to \\ a finite population approximate Nash equilibrium}

Our last main result, Theorem \ref{thm:nash_equilibria} below, provides an explicit construction of (approximate) Nash equilibria for the $N$--player game. We start by a precise formulation of the $N$--player game version of our problem. 

Throughtout this section, $(\Om,\F,\Fc,\P)$ is a filtered probability space supporting a sequence of independent Brownian motions $(W^i)_{i \ge 1}$, and we denote $\mathbf{W}:=(W^1,\ldots,W^N)$.

\paragraph*{Semi--martingale representation} Let $\Gamma:=(\gamma^{i,j})_{1 \le i,j \le N}$ be a $[0,1]^{N \x N}$--valued $\F$--predictable process, and $\mathbf{X}:=(X^1,\cdots,X^N)$ the solution of the SDE
\begin{align*}
    \mathrm{d}X^i_t
    =
    \frac{1}{N}\! \sum_{j=1}^N\! \gamma^{i,j}_t \mathrm{d}X^j_t 
    \!-\!
    \frac{1}{N}\! \sum_{j=1}^N\! \gamma^{j,i}_t \mathrm{d}X^i_t
    \!+\!
    b(t,X^i_t,\mu^N_t) \mathrm{d}t
    \!+\!
    \sigma(t,X^i_t,\mu^N_t) \mathrm{d}W^i_t,
    ~\mu_t^N:=\frac1N\!\sum_{i=1}^N\!\delta_{X^i_t},
\end{align*}
with $\Lc(X^1_0,\cdots,X^N_0)=\mu_0^{\otimes N}$, for some $\mu_0 \in \Pc_p(\R)$, and $p >2.$ By rewriting this equation in vector notation, we see that $M(\Gamma_t)\mathrm{d}\mathbf{X}_t=\vec{b}(t,\mathbf{X}_t,\mu^N_t)\mathrm{d}t+\mbox{\rm diag}[\vec{\sigma}(t,\mathbf{X}_t,\mu^N_t)]\mathrm{d}\mathbf{W}_t$, where $M(\Gamma_t)$ is a matrix depending on $\Gamma_t$, and for $\varphi=b$ or $\sigma$, we denoted $\vec{\varphi}(t,\mathbf{X}_t,\mu^N_t)$ the vector in $\R^N$ with $i-$th entry $\varphi(t,X^i_t,\mu^N_t)$, and $\mbox{\rm diag}[\vec{\varphi}]$ is the diagonal matrix with diagonal elements defined by the entries of the vector $\vec{\varphi}$. As $\Gamma_t$ has non--negative entries, it follows that $M(\Gamma_t)$ is a diagonally dominant matrix and is therefore invertible. Consequently $\mathbf{X}$ is an It\^o process defined by the drift and the diffusion coefficients $B=(B^i)_{1\le i\le N}$ and $\Sigma=(\Sigma^{i,j})_{1\le i,j\le N}$:
\begin{eqnarray} 
    \mathrm{d}\mathbf{X}_t
    =
    B_t \mathrm{d}t
    +
    \Sigma_t \mathrm{d}\mathbf{W}_t,
    &\!\!\mbox{with}&\!\!
    B_t
    =
    M(\Gamma_t)^{-1}\vec{b}(t,\mathbf{X}_t,\mu^N_t)
    =:
    \mathbf{B}(t,\Gamma_t,\mathbf{X}_t),
    \label{eq:def_coef}\\ &\!\!\mbox{and}&\!\!
    \Sigma_t
    =
    M(\Gamma_t)^{-1}\mbox{\rm diag}[\vec{\sigma}(t,\mathbf{X}_t,\mu^N_t)]
    =:
    \mathbf{\Sigma}(t,\Gamma_t,\mathbf{X}_t).
    \nonumber
\end{eqnarray}
For later use, we isolate the equations defining the coefficients $(B^i)_{1 \le i \le N}$ and $(\Sigma^{i,j})_{1 \le i,j \le N}$:
\begin{eqnarray} \label{def:drift_equation}
    B^i_t
    &=&
    \frac{1}{N} \sum_{j=1}^N \gamma^{i,j}_t B^j_t- \frac{1}{N} \sum_{j=1}^N \gamma^{j,i}_t B^i_t + b(t,X^i_t,\mu^N_t)
\\
\Sigma^{i,q}_t
&=&
\frac{1}{N} \sum_{j=1}^N \gamma^{i,j}_t \Sigma^{j,q}_t- \frac{1}{N} \sum_{j=1}^N \gamma^{j,i}_t \Sigma^{i,q}_t + \sigma(t,X^i_t,\mu^N_t) \mathbf{1}_{q=i}.
\label{def:vol_equation}
\end{eqnarray}

\paragraph*{Deviating player}
For any $[0,1]^N$--valued $\F$--predictable process $\beta:=(\beta^1,\cdots,\beta^N)^\intercal,$ we introduce the deviated matrix strategy defined by substituting $\beta^\intercal$ to the $i-$th line of $\Gamma$:
\begin{align*}
    \Gamma^{-i}(\beta)
    :=
    \Big(({\gamma^{1,\cdot}})^{^\intercal},\cdots,({\gamma^{i-1,\cdot}})^{^\intercal},\beta_t, ({\gamma^{i+1,\cdot}})^{^\intercal},\cdots,({\gamma^{N,\cdot}})^{^\intercal}\Big)^{^\intercal}\;\mbox{where}\;\gamma^{i,\cdot}:=(\gamma^{i,j})_{1 \le j \le N}.
\end{align*}
Following the same argument as in the mean field formulation of the mutual holding problem in Subsection \ref{sect:MFGformulation}, we now introduce the equivalent probability measure $\P^i_{\Gamma,\beta}$ defined by the Radon-Nykodim density:
$$
 \frac{\mathrm{d}\P^i_{\Gamma,\beta}}{\mathrm{d}\P}
 := Z^i_T:=
 e^{\int_0^T \psi^i_t\mathrm{d}W^i_t-\frac12| \psi^i_t|^2\mathrm{d}t},
 ~\mbox{on}~\Fc_T, ~\mbox{with}~\psi^i_t
    :=
    \frac{\mathbf{B}^{i}(t,\Gamma^{-i}_t(\beta_t),\mathbf{X}_t)-\mathbf{B}^{i}(t,\Gamma_t,\mathbf{X}_t)}{\mathbf{\Sigma}^{i,i}(t,\Gamma^{-i}_t(\beta_t),\mathbf{X}_t)},
$$
and we define $\widehat{W}^{i,i}_\cdot:=W^i_\cdot-\int_0^\cdot \psi^i_s \mathrm{d}s$. By the Girsanov Theorem,  the process $\widehat{\mathbf{W}}^i:=(W^1,\cdots,W^{i-1},\widehat{W}^{i,i},W^{i+1},\cdots,W^N)$ is a $\P^i_{\Gamma,\beta}$--Brownian motion. 

We now introduce the process $\mathbf{X}^i:=(X^{i,1},\cdots,X^{i,N})$ defined by initial conditions $X^{i,k}_0=X^k_0$, and the stochastic differential equation:
\begin{eqnarray}\label{def:Xi}
    \mathrm{d}\mathbf{X}^i_t
    &=&
    \mathbf{B}(t,\Gamma^{-i}_t(\beta_t),\mathbf{X}^i_t) \mathrm{d}t
    +
    \mathbf{\Sigma}(t,\Gamma^{-i}_t(\beta_t),\mathbf{X}^i_t) \mathrm{d}\widehat{\mathbf{W}}_t,
\end{eqnarray}
where the functions $\mathbf{B}$ and $\mathbf{\Sigma}$ are defined in \eqref{eq:def_coef}.  Given $\Gamma,$ notice that $\Xbb^i$ is well defined since the maps $\mathbf{B}$ and $\mathbf{\Sigma}$ are Lipschitz in $x$ (see Lemma \ref{lemma:coefficients} below). Also, as $B=(B^k)_{1 \le k \le N}$ and $\Sigma=(\Sigma^{k,q})_{1 \le k,q \le N}$ satisfy \eqref{def:drift_equation} and \eqref{def:vol_equation}, we can then easily check that the $\P^i_{\Gamma,\beta}-$ dynamics of the process $\mathbf{X}^i$ agrees with the mutual holding dynamics of Section \ref{eq_nagents-intuition}:
\begin{eqnarray*}
    \mathrm{d}X^{i,i}_t
    &\!\!\!\!=&\!\!\!\!
    \frac{1}{N} \sum_{j\neq i} \beta^{j}_t \mathrm{d}X^{i,j}_t 
    -
    \frac{1}{N} \sum_{j\neq i} \gamma^{j,i}_t \mathrm{d}X^{i,i}_t
    +
    \mathrm{d}P^{i,i}_t,
\\
    \mathrm{d}X^{i,k}_t
    &\!\!\!\!=&\!\!\!\!
    \frac{1}{N} \sum_{j=1}^N \gamma^{k,j}_t \mathrm{d}X^{i,j}_t 
    -
    \frac{1}{N} \bigg(\beta^{k}_t + \sum_{j \neq i} \gamma^{j,k}_t\bigg)\mathrm{d}X^{i,k}_t 
    +\mathrm{d}P^{i,k}_t
    ~\mbox{for}~k \neq i,
    \\
    \mathrm{d}P^{i,k}_t
    &\!\!\!\!=&\!\!\!\!
    b(t,X^{i,k}_t,\mu^{i,N}_t) \mathrm{d}t
    +
    \sigma(t,X^{i,k}_t,\mu^{i,N}_t) \mathrm{d}\widehat{W}^{i,k}_t,
    ~k=1,\ldots,N,
    ~\mbox{with}~
    \mu^{i,N}_t:=\frac1N\sum_{j=1}^N \delta_{X^{i,j}_t}.
\end{eqnarray*}
Consequently, we may define the $i-$th player reward from using the strategy $\beta$, given that the remaining agents stick to their strategies $\gamma^{k,\cdot}$, $k\neq i$, by
\begin{align*}
    J_i(\Gamma^{-i}(\beta))
    :=
    \E^{\P^i_{\Gamma,\beta}}[U(X^{i,i}_T)],
    ~~i=1,\ldots,N.
\end{align*}

\begin{Definition}\label{def:N-Nash}{\rm(Approximate Nash equilibrium)}
    For $\varepsilon \ge 0,$ we say that $\Gamma$ is an $\varepsilon$--Nash equilibrium if: 
    \begin{eqnarray*}
        J_i(\Gamma) &\ge&  \sup_{\beta} J_i(\Gamma^{-i}(\beta))  - \varepsilon,
        ~~\mbox{for all}~~i \in \{1,\cdots,N\}.
    \end{eqnarray*}
\end{Definition}

\begin{Remark}
{\rm 
The last definition may seem very involved when compared to our (formal) problem description in {\rm Section \ref{sec:mutualholding}}. It is however tailor-maid so as to mimick the spirit of our mean field formulation in {\rm Subsection \ref{sect:MFGformulation}}. Notice in particular that it is slightly different from the natural formal description of the deviating agent problem in \eqref{N-natural}, where the control of the $i$--th agent acts both on the drift and the diffusion coefficients, thus preventing from expressing the problem in terms of optimizing some change of measure. The discussion preceding Definition {\rm \ref{def:N-Nash}} shows that the deviating equity value process $\mathbf{X}^i$ defined in \eqref{def:Xi} has the same $\P^i_{\Gamma,\beta}$--dynamics than the process $\mathbf{X}$ involved in \eqref{eq_nagents-intuition}. Consequently, the corresponding $N$--players mutual holding problem are {\it essentially} the same.
}
\end{Remark}

\paragraph*{From MFG solution to finite population approximate Nash equilibria}
We now use the structure of the MFG solution in Theorem \ref{thm:main} in order to construct approximate Nash equilibria as in Definition \ref{def:N-Nash}.
 
Let $B:[0,T] \x \R \x \Pc(\R) \to \R$ and $c:[0,T] \x \Pc(\R) \to \R_+$ be the maps given in Theorem \ref{thm:main}, and define for $\mathbf{x}=(x^1,\ldots,x^N)\in\R^N$:
\begin{eqnarray}\label{piN}
    \pi(t,x^i,m^N)
    :=\pi^i(t,\mathbf{x}):=
    \mathbf{1}_{\{B(t,x^i,m^N) \ge 0 \}}
    &\mbox{where}&
    m^N:=\frac{1}{N} \sum_{j=1}^N \delta_{x^j}.
\end{eqnarray}
and for $i,j=1,\cdots, N$:
\begin{align*}
    \Sigma^{i,j}(t,\mathbf{x})
    :=
    \frac{\sigma(t,x^i,m^N) \mathbf{1}_{\{i=j\}}
            \!+\!\frac{1}{N}A^j(t,\mathbf{x})\sigma(t,x^q,m^N)}
           {1+\pi(t,x^i,m^N)},
    ~A^j(t,\mathbf{x})
    :=
    \frac{  \frac{\pi^j(t,\mathbf{x})}{1+\pi^j(t,\mathbf{x})}}
           {1\!-\! \frac{1}{N} \sum_{k=1}^N \frac{\pi^k(t,\mathbf{x})}{1+\pi^k(t,\mathbf{x})}}.
\end{align*}
By \citeauthor[Section 2--Part 6--Theorem 1]{KrylovControlledDiffusion} \cite[Section 2--Part 6--Theorem 1]{KrylovControlledDiffusion}, there exists a weak solution $(X^1,\cdots,X^N)$ to the stochastic differential equation:
\begin{eqnarray}
    \mathrm{d}X^i_t
    &=&
    B(t,X^i_t,\mu^N_t) \mathrm{d}t
    +
    \sum_{j=1}^N\Sigma^{i,j}(t,X^i_t,\mu^N_t) \mathrm{d}W^j_t\;\;\mbox{where}\;\;\mu^N_t:=\frac{1}{N} \sum_{i=1}^N \delta_{X^i_t}
    \nonumber\\
    &=&
    \frac{1}{N} \sum_{j=1}^N \pi^{i,j}_t \mathrm{d}X^j_t 
    -
    \frac{1}{N} \sum_{j=1}^N \pi^{j,i}_t \mathrm{d}X^i_t
    +
    b(t,X^i_t,\mu^N_t) \mathrm{d}t
    +
    \sigma(t,X^i_t,\mu^N_t) \mathrm{d}W^i_t,
    \label{def:nash_equilibria}
\end{eqnarray}
where the last equality follows by direct verification, and where we used the notation:  
\begin{eqnarray}\label{PiN}
\pi^{i,j}_t:=\pi^j_t:=\pi(t,X^j_t,\mu^N_t)
&\mbox{and}&
\Pi^N:=(\pi^{i,j})_{1 \le i,j \le N},
~~\mbox{for all}~~N\ge 1.
\end{eqnarray}
We are now able for the statement of our last result.

\begin{Theorem} \label{thm:nash_equilibria}
For all $N\ge 1$, the mutual holding strategy $\Pi^N$ is an $\varepsilon_N$--Nash equilibrium, for some sequence $\varepsilon_N\ge 0$ satisfying $\lim_{N \to \infty} \varepsilon_N =0$.
\end{Theorem}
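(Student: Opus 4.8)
The plan is to show that the mutual holding strategy $\Pi^N$, built from the MFG solution $(\pi^\star,\mu)$ of Theorem \ref{thm:main}, is an $\varepsilon_N$--Nash equilibrium by combining a propagation of chaos estimate with the verification argument that already underlies the proof of Theorem \ref{thm:main}. First I would establish the regularity of the finite--population coefficients: using Lemma \ref{lemma:coefficients} (referenced in the excerpt) and the diagonal dominance of $M(\Gamma_t)$, show that $\mathbf{B}(t,\Gamma_t,\mathbf{x})$ and $\mathbf{\Sigma}(t,\Gamma_t,\mathbf{x})$ are Lipschitz in $\mathbf{x}$ with constants uniform in $N$ and in the predictable matrix strategy $\Gamma$, and that $\mathbf{\Sigma}$ is uniformly elliptic. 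This gives wellposedness and uniform $L^p$ moment bounds for $\mathbf{X}$ and for each deviated process $\mathbf{X}^i$ defined in \eqref{def:Xi}, together with a bound on the exponential martingale $Z^i$ that makes $\P^i_{\Gamma,\beta}$ a genuine probability measure with controlled density.

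Next I would bound $J_i(\Pi^N)$ from below. Because the $i$--th coordinate $X^{i,i}$ under $\P^i_{\Gamma,\beta}$ solves exactly the mutual holding dynamics with the representative agent using $\beta$ against the fixed environment $\pi^\star$, I would compare $X^{i,i}$ with the solution $X$ of the mean field equation \eqref{controlled}/\eqref{MF-SDE} controlled by $(\beta,\pi^\star)$ driven by the same Brownian motion, with $\mu$ the MFG equilibrium measure. The key step is a propagation of chaos estimate: controlling $\sup_{t}\E\big[|X^{i,i}_t - X_t|^2\big]$ by $C\,\big(\Wc_2(\mu^{i,N}_\cdot,\mu_\cdot)^2 + \text{something}\big)$, which in turn is controlled by the convergence rate of the empirical measure of $N$ i.i.d. samples of $\mu$ to $\mu$ (a rate like $N^{-2/(d+4)}$ or the $L^2$ rate from \citeauthor{FournierGuillin}, here $d=1$). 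Crucially, when all players use $\pi^\star$ the environment seen by player $i$ is frozen — $\pi^\star(t,x,y)=\1_{\{B(t,y,\mu_t)\ge0\}}$ depends on the population only through $\mu_t$ — so the empirical--measure dependence only enters through $b(t,\cdot,\mu^{i,N}_t)$ and the diffusion normalization, both Lipschitz in the measure argument. I would then apply the optimality statement of Theorem \ref{thm:main}(ii) (equivalently Corollary \ref{cor:MFG_sol}): $\E^{\P^\beta_{\pi^\star,\mu}}[U(X_T)]\le J_{\pi^\star,\mu}(\pi^\star)=\widehat\E^\mu[U(\widehat X_T)]$, so that
\begin{align*}
    \sup_\beta J_i(\Gamma^{-i}(\beta))
    \le
    \widehat\E^\mu[U(\widehat X_T)] + o(1),
\end{align*}
uniformly in $i$, where the $o(1)$ comes from the Lipschitz continuity of $U$ applied to the $L^2$ distance between $X^{i,i}_T$ and $X_T$, together with a uniform bound on the Radon--Nikodym density making the change of measure from $\P^i_{\Pi^N,\beta}$ to the mean field measure $\P^\beta_{\pi^\star,\mu}$ harmless.

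Finally I would bound $J_i(\Pi^N)$ from below by the same quantity $\widehat\E^\mu[U(\widehat X_T)]$ up to $o(1)$: here $X^{i,i}$ with $\beta=\gamma^{i,\cdot}=\pi^\star$ coincides with the $i$--th coordinate of $\mathbf{X}$ itself, which is a near--i.i.d. sample from $\mu$ by the same propagation of chaos estimate (now without any change of measure), giving $\E[U(X^i_T)]\ge\widehat\E^\mu[U(\widehat X_T)]-o(1)$ by Lipschitz continuity of $U$. Combining the two bounds yields $J_i(\Pi^N)\ge\sup_\beta J_i(\Gamma^{-i}(\beta))-\varepsilon_N$ with $\varepsilon_N\to0$, uniformly in $i\in\{1,\dots,N\}$, which is the claim. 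The main obstacle I anticipate is the propagation of chaos estimate in the presence of the \emph{discontinuous} diffusion coefficient of the equilibrium dynamics \eqref{MFG-SDE}: the usual coupling/Gr\"onwall argument breaks down across the boundary $\{B(t,\cdot,\mu_t)=0\}$, so one must instead exploit the existence of a density for the limiting law (Theorem \ref{thm:weak_existence} / the density result of Section \ref{sec:density}) to show the indicator $\1_{\{B\ge0\}}$ is $\mu_t$--a.e. continuous, hence the set where the coefficients of $\mathbf{X}^i$ and $X$ differ has asymptotically negligible mass; controlling this requires care and is where most of the technical work lies, along with making all error terms uniform in the deviating control $\beta$ (which ranges over an infinite-dimensional set) and in the player index $i$.
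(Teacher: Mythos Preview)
Your strategy differs substantially from the paper's, and it contains a structural gap that would prevent it from going through as written.

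The paper does \emph{not} attempt a quantitative propagation--of--chaos estimate toward a fixed MFG equilibrium measure $\mu$. Instead it argues by compactness: define $\varepsilon^N=\sup_\beta J_1((\Pi^N)^{-1}(\beta))-J_1(\Pi^N)$ (using exchangeability), pick a near--maximizing $\beta^N$, and show that the averaged sequence $\mathrm{P}^N:=\frac1N\sum_i \P^i_{\Pi,\beta^N}\circ(X^{i,i},\mu^{i,N})^{-1}$ is tight in $\Wc_2$. Any limit point $\mathrm{P}^\infty$ is identified via a martingale--problem argument (Proposition \ref{prop:limit_property}): under $\mathrm{P}^\infty$, the random measure $\widehat\mu$ lies in $\Pc_\Sc(\pi)$ and the conditional law of $\widehat X$ given $\widehat\mu$ lies in $\Sc(\pi,\widehat\mu)$. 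Corollary \ref{cor:MFG_sol} then says that \emph{every} element of $\Pc_\Sc(\pi)$ is an MFG equilibrium, so the optimality inequality holds at the limit and $\limsup\varepsilon^N\le 0$.

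Your plan instead fixes one MFG measure $\mu$ and tries to couple $X^{i,i}$ to the mean field process with law $\mu$. This runs into two concrete problems. First, as emphasized in Remark \ref{rm:definition}, the equilibrium SDE \eqref{MFG-SDE} is not known to have a unique solution, so there is no canonical $\mu$ for the $N$--player system to converge to; subsequential limits may land on different solutions, and your coupling argument gives no control over which one. The paper's device of invoking Corollary \ref{cor:MFG_sol} for \emph{any} limit point is precisely what circumvents this. Second, your claim that ``the environment seen by player $i$ is frozen'' is not correct in this setup: by \eqref{piN}--\eqref{PiN} the $N$--player strategy is $\pi^{i,j}_t=\mathbf{1}_{\{B(t,X^j_t,\mu^N_t)\ge 0\}}$, so the indicator --- and hence the diffusion normalization $1+\pi(t,X^i_t,\mu^N_t)$ --- depends on the empirical measure $\mu^N_t$ through $c(t,\mu^N_t)$ inside $B$, and this dependence is discontinuous, not Lipschitz. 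Consequently a Gr\"onwall/coupling estimate with a Fournier--Guillin rate does not go through directly; the paper handles the discontinuity only at the level of weak limits, using the density result of Proposition \ref{prop:lebesgue} to make the indicator a.e.\ continuous and then applying the Portmanteau theorem, never producing a rate. Your acknowledgment of this obstacle is well placed, but the resolution you sketch is essentially the compactness argument the paper actually carries out, not the quantitative coupling you propose.
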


}

{\color{black}
\section{A first step towards numerical illustration} \label{sec:numerical} 

In this paragraph we illustrate briefly our main result in the simple situation where the provisions process, which represents the idiosyncratic risk, is defined by an Ornstein--Uhlenbeck process:
\begin{align*}
    b(t,x,m)
    =
    b(t,x)
    =
    \theta (\overline{m}-x)\;\;\mbox{and}\;\;
    \sigma(t,x,m)
    =
    \overline{\sigma},
\end{align*}
with given parameters $\theta > 0$ and $(\overline{\sigma}, \overline{m}) \in \R \x \R$. 
In order to emphasize the behavior generated by the optimal control, we aim at visualizing the distribution of the optimal path $X^\star$ and the distribution of the path corresponding to the non--interacting situation. In other words, we want to compare the optimal behavior with the no cross--holding situation where the equity value process is simply given by the provisions $P$, see \eqref{eq_nagents-intuition}. 

Let us recall that the solution of the mean field game of mutual holding provided in Theorem \ref{thm:main} does not depend on the choice of the utility function $U$, as long as it is nondecreasing and Lipschitz.

Given the involved nature of the equilibrium mean field McKean-Vlasov SDE with coefficients $B$ and $\Sigma$, as defined in Theorem \ref{thm:main} (i), a natural method for the numerical illustration would be based on the approximation of the mean field dynamics by the corresponding interacting particles system. The analysis of this method appears to be very challenging in our context because of the non--regularity of the problem and the equation defining the function $c.$ We therefore refrain from addressing a complete numerical study of this method, and 
\begin{center}
{\it we focus on a simple numerical illustration which highlights the main effects underlying our main results. The accurate numerical analysis is left for future research. }
\end{center}
Instead of the equilibrium mean field McKean-Vlasov SDE with coefficients $B$ and $\Sigma$, we consider a one--step illustration based on a one-step Euler approximation of the provisions and the equilibrium equity dynamics processes:
\b*
    P_T
    &=&
    X_0 
    +
    b(0,X_0) \Delta 
    +
    \overline{\sigma} \sqrt{\Delta} Z,
    \;\;\mbox{where}\;\;X_0 \sim \mu_0=\mathcal{N}(\mu,\sigma^2),
    \\
    X^\star_T
    &=&
    X_0 
    +
    B(0,X_0,\mu_0) \Delta 
    +
    \Sigma(0,X_0,\mu_0) \sqrt{\Delta} Z.
\e*
with $Z \sim \mathcal{N}(0,1),$ and $\Delta=T.$ We emphasize that by taking $\Delta=T$, we are not considering the Euler discretization of the McKean-Vlasov equilibrium SDE, as one typically would like to analyze the convergence towards the solution of our problem. Again, we are fixing $\Delta=T$, and we are modestly aiming at exploring the distribution induced by the above simplified dynamics.

Given the particular structure of our problem, the constant $c$ appearing in the optimal drift $B$ is defined by $H(c)=0$ where
\begin{align*}
    H(x)
    :=
    x
    -
    \frac{1}{2} \theta \sigma^2 f_0\Big(\frac{x}{\theta}+\overline{m} \Big) 
    -
    \frac{1}{2} (x-\theta(\mu-\overline{m})) F_0\Big(\frac{x}{\theta}+\overline{m}\Big),
\end{align*}
with $f_0$ and $F_0$ the density and the cumulative distribution functions of $X_0$. As the provisions process $P$ are defined by an Ornstein--Uhlenbeck process, we chose to start with the corresponding invariant distribution, namely $\mu_0$ is a Gaussian distribution with mean $\mu=\overline{m}$ and variance $\sigma^2=\frac{\overline{\sigma}^2}{2 \theta}$. 
\begin{figure}[!h]
    \centering
    \includegraphics[width=7cm]{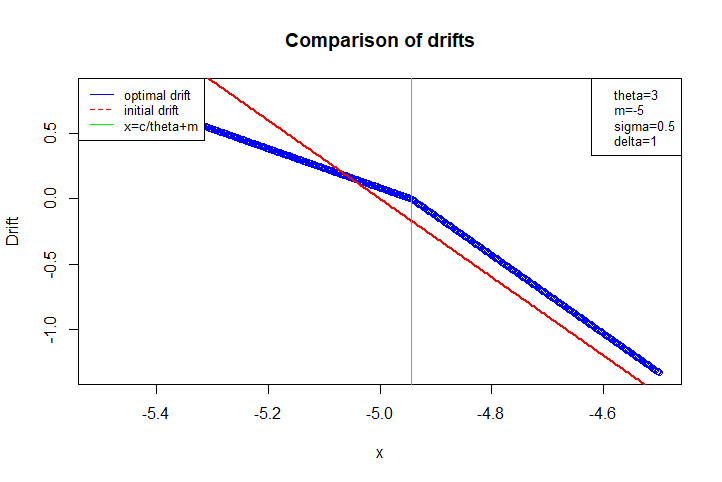}
    \vspace{-5mm}
    \caption{\small \it The equity process drift $B$ as an averaging of the provisions drift $b$}
    \label{fig:B-b}
\end{figure}

\noindent{\bf Equilibrium dynamics of the equity process.} Our main result in Theorem \ref{thm:main} states that the diffusion $\Sigma$ of the equity process at equilibrium is either equal to the provisions process diffusion, or cut by half, depending on the sign of its drift coefficient $B$. This illustrates the diversification effect of the optimal mutual holding between our continuum population of firms. Similarly the drift $B$ is related to the drift $b$ of the provisions process by the relation $B^-=(b+c)^-$ and $B^+=\frac12(b+c)^+$ with $c$ defined in Theorem \ref{thm:main}. Figure~\ref{fig:B-b} shows that the equilibrium equity value drift represents an averaging (in some sense) of the provisions drift $b$.

\vspace{3mm}
\noindent{\bf Effect of the provisions mean $\overline{m}$.} Figure \ref{fig:mean} compares the distributions of the provisions and the equilibrium equity value. The density of the equity value exhibits smaller variance than that of the provisions process. This illustrates the diversification effect of the cross holding between individual firms. A remarkable effect is that the density is pushed to the right when the provisions process has a negative mean. This indicates that the optimal cross holding policy of firms with positive drift induces an overall increase of the mean of the population's equity value.
\begin{figure}[!h]
    \centering
    \begin{subfigure}[b]{0.4\textwidth}
        \includegraphics[width=\textwidth]{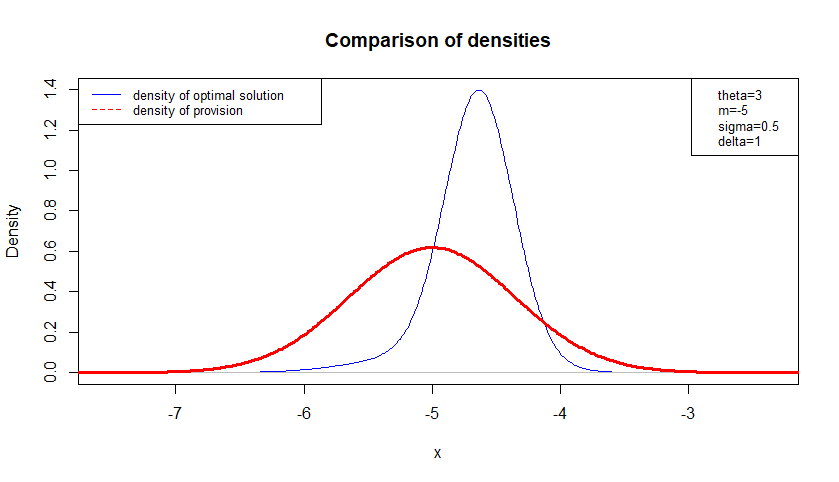}
    \end{subfigure}
    \begin{subfigure}[b]{0.4\textwidth}
        \includegraphics[width=\textwidth]{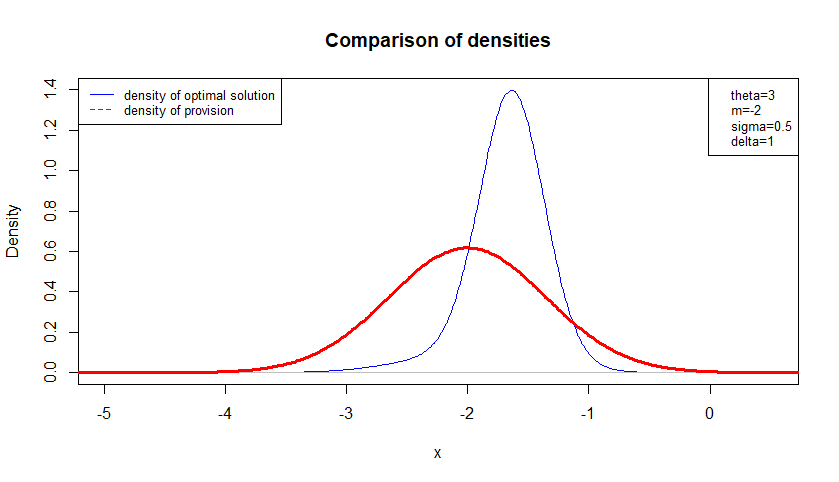}
    \end{subfigure}
    \begin{subfigure}[b]{0.4\textwidth}
        \includegraphics[width=\textwidth]{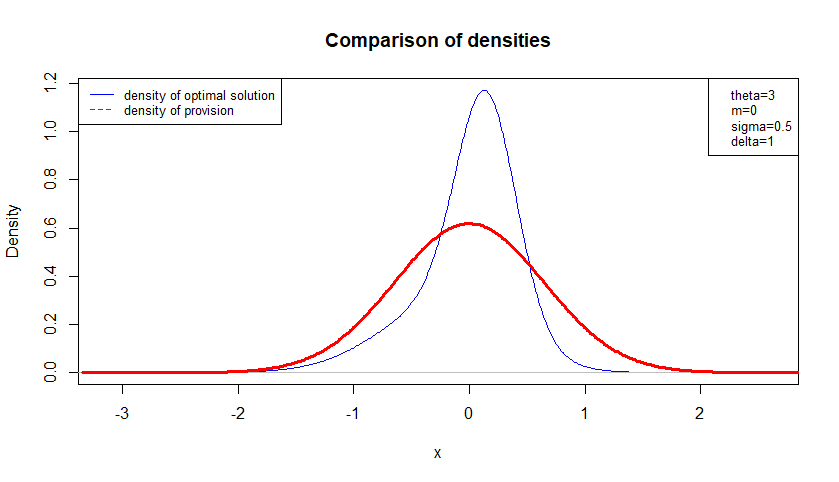}
    \end{subfigure}
    \begin{subfigure}[b]{0.4\textwidth}
        \includegraphics[width=\textwidth]{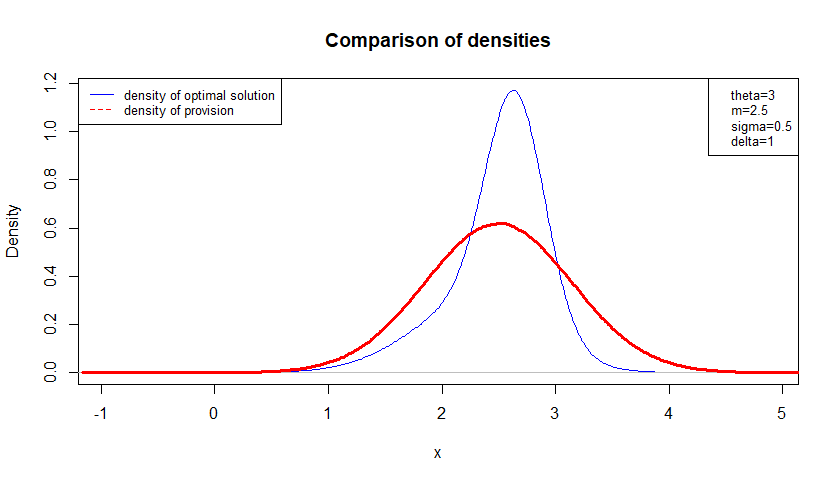}
    \end{subfigure}
    \begin{subfigure}[b]{0.4\textwidth}
        \includegraphics[width=\textwidth]{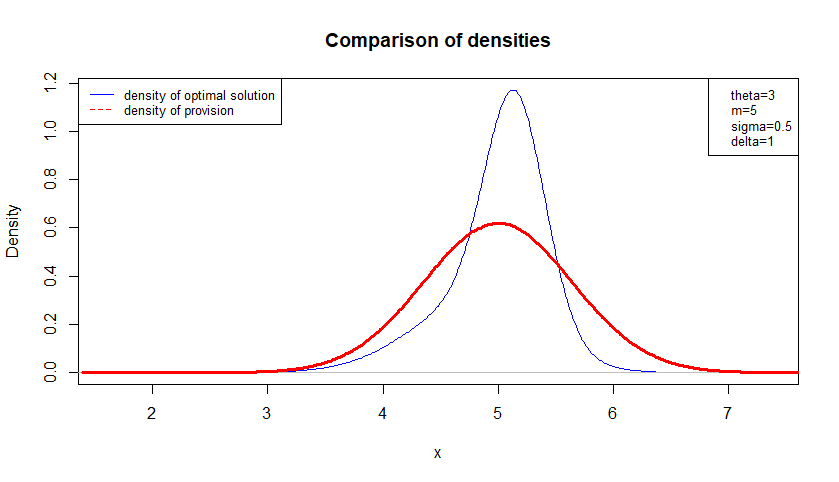}
    \end{subfigure}
\vspace{-5mm}
\caption{\small \it  Effect of $\overline{m}$: equilibrium equity value has smaller variance; moreover under a negative mean of the provisions, the equilibrium equity value process has a significantly larger mean.}
\label{fig:mean}
\end{figure}

\vspace{3mm}
\noindent{\bf Effect of the provisions volatility $\overline{\sigma}$.} Figure \ref{fig:vol} compares the distributions of the provisions and the equilibrium equity value. We set the mean $\overline{m}$ to zero. As expected from the result of Theorem \ref{thm:main}, the magnitude of the volatility reduction does not depend on the range of $\bar\sigma$, as $\Sigma$ either coincides with $\sigma$ or $\frac12\sigma$. Our numerical illustration shows an asymmetry effect on the distribution which is more visible under large volatility.
\begin{figure}[!h]
    \centering
    \begin{subfigure}[b]{0.4\textwidth}
        \includegraphics[width=\textwidth]{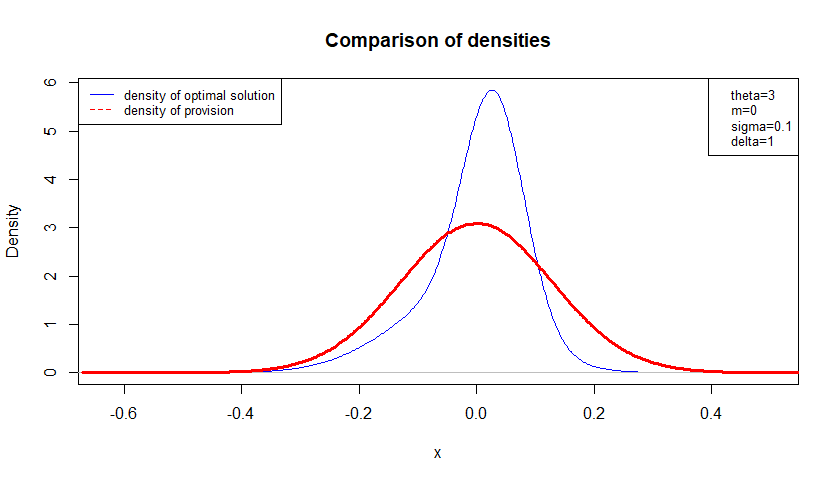}
    \end{subfigure}
    \begin{subfigure}[b]{0.4\textwidth}
        \includegraphics[width=\textwidth]{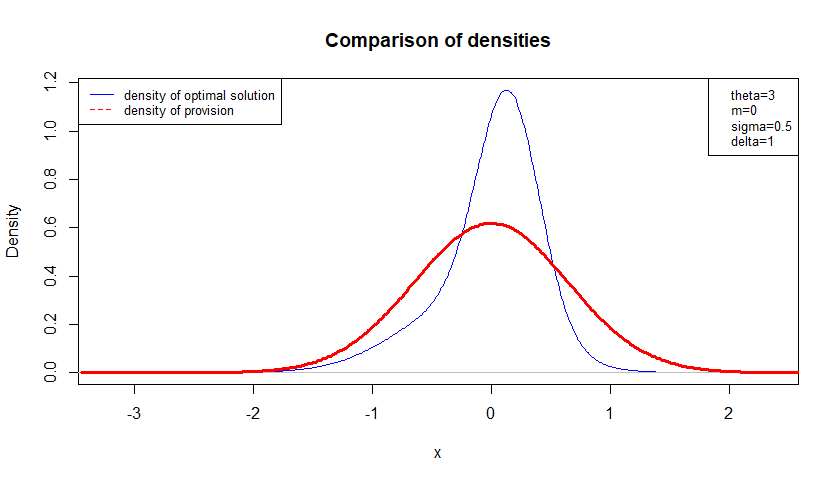}
    \end{subfigure}
    \begin{subfigure}[b]{0.4\textwidth}
        \includegraphics[width=\textwidth]{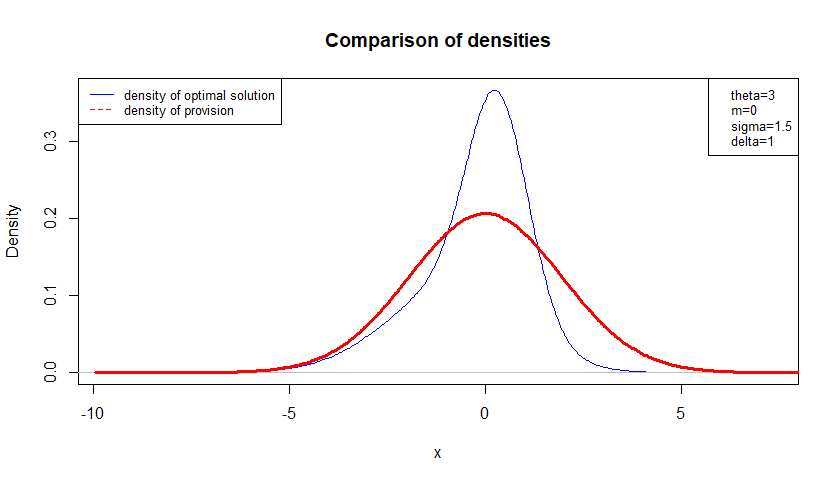}
    \end{subfigure}
    \vspace{-5mm}
    \caption{\small\it Effect of $\overline{\sigma}$: the variance of the equilibrium equity value relative to that of the provisions seems to have a similar magnitude for various values of $\overline{\sigma}$, but exhibits an asymmetry effect.}
    \label{fig:vol}
\end{figure}

\vspace{3mm}
\noindent{\bf Effect of the provisions mean reversion speed $\theta$.} As the speed of mean reversion get larger, Figure \ref{fig:meanreversion} below shows a remarkable asymmetry effect between the negative support and the positive one. Individual firms with negative drift essentially conserve their situation, while individual firms with positive drift tend to be more concentrated and thus have a smaller conditional variance.
\begin{figure}[!h]
    \centering
    \begin{subfigure}[b]{0.35\textwidth}
        \includegraphics[width=\textwidth]{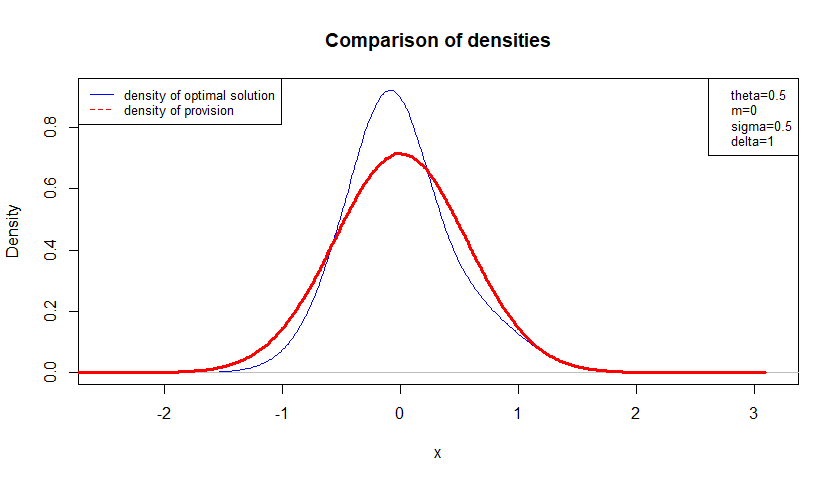}
    \end{subfigure}
    \begin{subfigure}[b]{0.35\textwidth}
        \includegraphics[width=\textwidth]{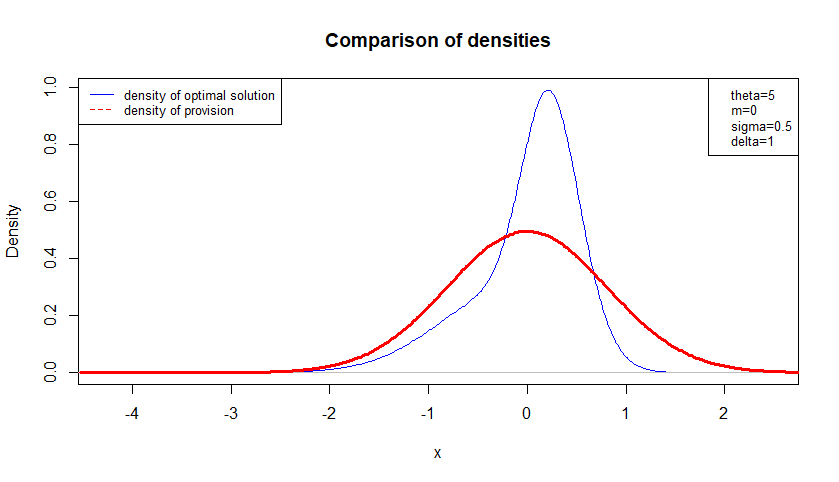}
    \end{subfigure}
    \begin{subfigure}[b]{0.35\textwidth}
        \includegraphics[width=\textwidth]{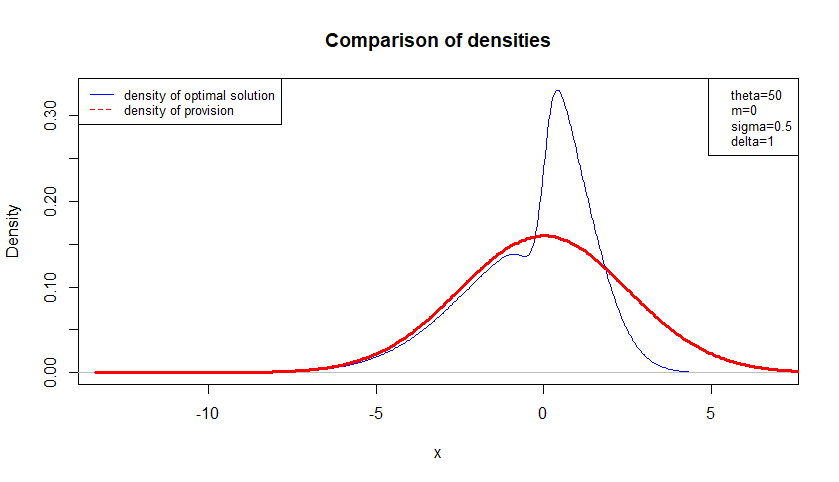}
    \end{subfigure}
    \begin{subfigure}[b]{0.35\textwidth}
        \includegraphics[width=\textwidth]{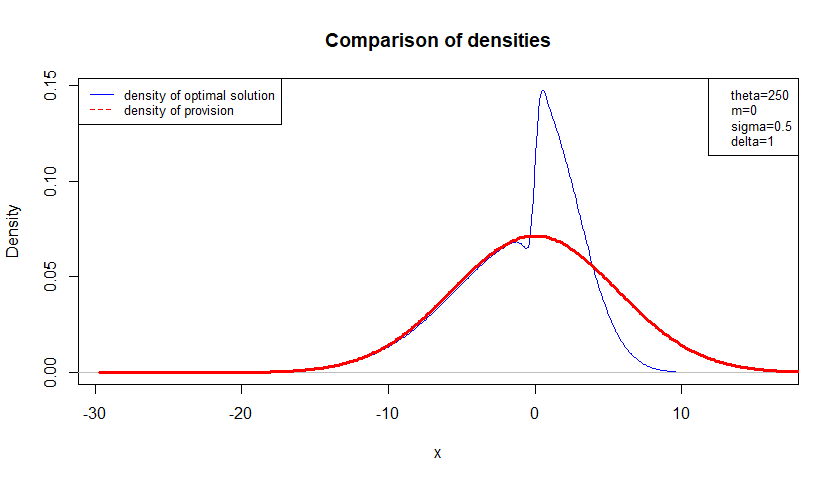}
    \end{subfigure}
    \begin{subfigure}[b]{0.35\textwidth}
        \includegraphics[width=\textwidth]{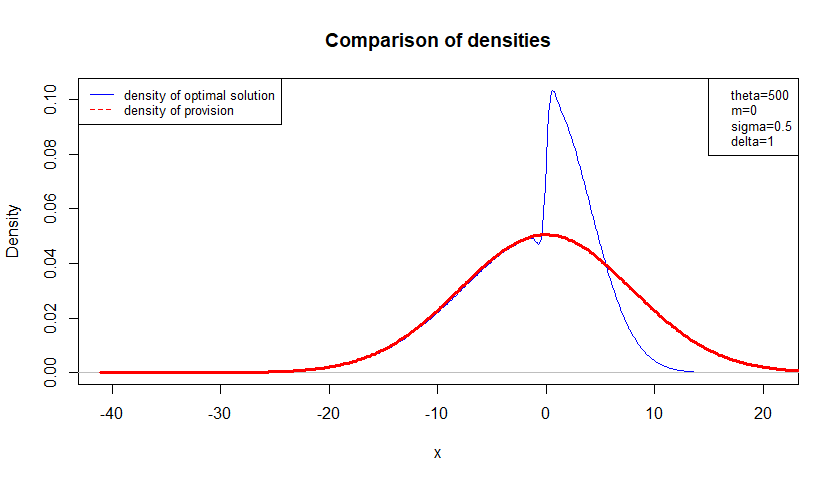}
    \end{subfigure}
    \caption{\small \it Effect of $\theta$: under large mean reversion the density of the equity value inherits that of the provisions on the negative support, while the positive part benefits from diversification. }
    \label{fig:meanreversion}
\end{figure}

}

\section{Deriving a solution of the mean field game equilibrium}
\label{sec:proofmfg}

Let $\mu\in\Pc_\Sc(\pi^*)$ be the distribution of a solution of the mean field SDE \eqref{MFG-SDE}, whose existence is guaranteed by Theorem \ref{thm:SDE}. In order to prove that $(\pi^*,\mu)$, is a mean field game equilibrium, we consider the individual representative agent optimization problem
 \b*
 V_0
 =
 \sup_{\beta\in\Ac} J_{\pi^*,\mu}(\beta),
 &\mbox{where}&
 J_{\pi^*,\mu}(\beta)
 :=
 \E^{\P_{\pi^*,\mu}^\beta}\big[U(X_T)\big],
 \e*
and the dynamics of the state process $X$ in terms of the $\P_{\pi^*,\mu}^\beta$--Brownian motion $W^{\pi^*,\mu,\beta}$ are given by:
 \b*
 \mathrm{d}X_t
 &=&
 \frac{\big[b(t,X_t,\mu_t)+\int_\R\beta(t,X_t,y)B(t,y,\mu_t)\mu_t(\mathrm{d}y)\big]\mathrm{d}t+\sigma(t,X_t,\mu_t)\mathrm{d}W^{\pi^*,\mu,\beta}_t}
        {1+\int_\R\pi^*(t,y,X_t)\mu_t(\mathrm{d}y)}
 \\
 &=&
 \frac{\big[b(t,X_t,\mu_t)+\int_\R\beta(t,X_t,y)B(t,y,\mu_t)\mu_t(\mathrm{d}y)\big]\mathrm{d}t+\sigma(t,X_t,\mu_t)\mathrm{d}W^{\pi^*,\mu,\beta}_t}
        {1+\1_{\{B(t,X_t,\mu_t)\ge 0\}}},
 ~~\P\mbox{--a.s.}
 \e*
with $B=\frac12(b+c)^+-(b+c)^-$, by Theorem \ref{thm:main} (i). In order to write the corresponding HJB equation in its backward SDE form, we introduce the corresponding Hamiltonian defined for all $(t,x,m)\in [0,T]\times\R\times\Pc_2(\R)$ by
 \b*
 H_t(x,m,z)
 &:=&
 \frac{ z\,b(t,x,m)+\sup_{\beta(.)} z\int_\R\beta(y)B(t,y,m)m(\mathrm{d}y)}
        {1+\1_{\{B(t,x,m)\ge 0\}}}
 \\
 &=&
 \frac{ z\,b(t,x,m)+\int_\R(zB(t,y,m))^+m(\mathrm{d}y)}
        {1+\1_{\{B(t,x,m)\ge 0\}}},
 \e*
with maximizer 
 $$
 \widehat\beta(t,y,m,z):=\1_{\{zB(t,y,m)\ge 0\}}.
 $$
Consider the backward stochastic differential equation
 \be\label{BSDE0}
 Y_T=U(X_T),
 &\mbox{and}&
 \mathrm{d}Y_t=Z_t \mathrm{d}X_t-H_t(X_t,\mu_t,Z_t)\mathrm{d}t, ~~t\in[0,T],~~\mu\mbox{--a.s.}
 \ee
Substituting the expression of the Hamiltonian $H$, and the equilibrium dynamics of $X$ corresponding to the optimal holding $\pi^*$ of Theorem \ref{thm:main} (ii), we rewrite the dynamics of the process $Y$ of the backward SDE \eqref{BSDE} in terms of the Brownian motion $W^\mu=W$ as:
 $$
 \mathrm{d}Y_t
 =
 \frac{ \int_\R \big[Z_tB(t,y,\mu_t)^+-(Z_tB(t,y,\mu_t))^+ \big]\mu_t(\mathrm{d}y)\;\mathrm{d}t+Z_t\sigma(t,X_t,\mu_t)\mathrm{d}W_t}
        {1+\1_{\{B(t,X_t,\mu_t)\ge 0\}}}
 $$
which induces, after substituting the expression of $B$ in Theorem \ref{thm:main} (i):
 \be\label{BSDE}
 Y_T=U(X_T),~~\mbox{and}~~
 \mathrm{d}Y_t
 &=&
 F_t(X_t,Z_t)\mathrm{d}t
 +Z_t\Sigma(t,X_t,\mu_t)\mathrm{d}W_t,~\P\mbox{--a.s.}
 \ee
where the generator $F$ is given by
 \b*
 F_t(x,z)
 &:=&
 \frac{ \int_\R[zB^+-(z\;B)^+](t,y,\mu_t)\mu_t(\mathrm{d}y)}
        {1+\1_{\{B(t,x,\mu_t)\ge 0\}}}
 ,~~t\le T,~\mbox{and}~x,z\in\R.  
 \e*

\begin{Lemma}\label{lem:BSDE}
The backward SDE \eqref{BSDE} has a unique solution $(Y,Z)\in\H^2(\P)\times\H^2(\P)$, with 
 \b*
 Z\ge 0
 &\mbox{and}&
 Y_t = \E^\P\big[U(X_T)|\Fc_t\big],~t\le T,
 \e*
\end{Lemma}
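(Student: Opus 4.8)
The plan is to first observe that the generator $F_t(x,z)$ enjoys enough structure to apply the standard Pardoux--Peng theory. Indeed, the diffusion coefficient $\Sigma(t,x,\mu_t)=\big(1-\tfrac12\1_{\{b+c\ge 0\}}\big)\sigma(t,x,\mu_t)$ is bounded (by Assumption \ref{assum:bsigma}, $\sigma$ has quadratic growth and $\mu$ is square integrable so along the solution it is $\H^2$), and the driver $F_t(x,z)$ is Lipschitz in $z$: writing $F_t(x,z)=\big(1+\1_{\{B(t,x,\mu_t)\ge0\}}\big)^{-1}\int_\R\big[zB^+(t,y,\mu_t)-(zB(t,y,\mu_t))^+\big]\mu_t(\mathrm{d}y)$, the map $z\mapsto zB^+-(zB)^+$ is globally Lipschitz with constant $|B|$, and $\int_\R |B(t,y,\mu_t)|\mu_t(\mathrm{d}y)<\infty$ since $B\in\H^2_\mu$ along the solution. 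Since $U$ is Lipschitz and $X_T\in L^2$, the terminal condition $U(X_T)$ is square integrable. Hence the classical existence and uniqueness theorem for Lipschitz BSDEs gives a unique solution $(Y,Z)\in\H^2(\P)\times\H^2(\P)$.

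Next I would establish the two additional properties. For the representation $Y_t=\E^\P\big[U(X_T)\mid\Fc_t\big]$, the key observation is that the driver vanishes: for $z\ge 0$ one has $zB^+=(zB)^+$ pointwise, so $F_t(x,z)=0$ whenever $z\ge 0$. Therefore, if we can show $Z\ge 0$, then the BSDE reduces to $\mathrm{d}Y_t=Z_t\Sigma(t,X_t,\mu_t)\mathrm{d}W_t$ with $Y_T=U(X_T)$, i.e. $Y$ is the $\P$--martingale $\E^\P\big[U(X_T)\mid\Fc_t\big]$, which is the claimed identity. So everything hinges on the sign of $Z$.

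To prove $Z\ge 0$, the cleanest route is a comparison/monotonicity argument via the Markovian structure. Since $(X,\mu)$ is (by the weak solution of \eqref{MFG-SDE}) a Markov process, write $Y_t=v(t,X_t)$ for the candidate value function $v(t,x):=\E^\P\big[U(X_T)\mid X_t=x\big]$; then $Z_t=\Sigma(t,X_t,\mu_t)^{-1}\cdot$(martingale integrand) and, formally, $Z_t=D_xv(t,X_t)\,\Sigma(t,X_t,\mu_t)/\Sigma(t,X_t,\mu_t)$, so the sign of $Z$ is the sign of $D_xv$. Monotonicity of $v(t,\cdot)$ follows from the fact that $U$ is nondecreasing together with the monotonicity of the SDE flow $x\mapsto X_T^{t,x}$ in a one--dimensional diffusion (a stochastic comparison theorem for SDEs with the same driving Brownian motion — here one must be slightly careful since $\Sigma$ is discontinuous, but $B$ is continuous and the comparison result of, e.g., Krylov applies to the weak solutions). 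A more robust alternative, avoiding regularity of $v$ entirely, is: first solve the BSDE with the \emph{truncated} driver $\tilde F_t(x,z):=F_t(x,z^+\wedge n)$ which is globally Lipschitz and nonpositive, then use the BSDE comparison theorem against the BSDE with zero driver to get $Y_t\le\E^\P[U(X_T)\mid\Fc_t]$, and against the solution with driver $z\mapsto -|B|_{L^1(\mu_t)}|z|$ to control it below; iterating / passing to the limit and using that $F\le 0$ always (since $zB^+\le(zB)^+$ fails only the other way — in fact $zB^+-(zB)^+\le 0$ for all $z$) one concludes $Y_t\le\E^\P[U(X_T)\mid\Fc_t]$, and then a reverse inequality plus uniqueness forces $F_t(X_t,Z_t)=0$ $\mathrm{d}t\otimes\mathrm{d}\P$--a.e., which in turn forces $Z_t\ge 0$ on $\{B(t,X_t,\mu_t)<0\}$ and is automatic elsewhere.

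The main obstacle is precisely the sign of $Z$: the driver $F$ is not monotone in the ``wrong'' direction in any way that makes the bare comparison theorem immediately yield $Z\ge0$, so one genuinely needs the economic structure (nondecreasing $U$ propagating through the one--dimensional flow) or a self--consistent fixed--point/truncation argument to close the loop. The discontinuity of $\Sigma$ in $x$ is a mild additional nuisance here but is already handled by Theorem \ref{thm:SDE}, so it does not obstruct the BSDE analysis once a weak solution $\mu$ is fixed; all BSDE arguments are performed on the fixed probability space $(\Omega,\Fc,\F,\P)$ carrying that solution.
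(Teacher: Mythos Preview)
Your setup is correct and matches the paper: existence/uniqueness via Pardoux--Peng, the observation that $F_t(x,z)=0$ for $z\ge 0$, and the reduction of the whole lemma to proving $Z\ge 0$ for the martingale-representation pair $(\bar Y,\bar Z)$ with $\bar Y_t=\E^\P[U(X_T)\mid\Fc_t]$, after which uniqueness closes the argument.

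However, your Route B does not close. From $F\le 0$ (which is correct: $zB^+-(zB)^+\le 0$ for all $z$), the BSDE gives $Y_t=\bar Y_t-\E\big[\int_t^T F_s\,\mathrm{d}s\,\big|\,\Fc_t\big]\ge \bar Y_t$, so you obtain the inequality in the direction opposite to the one you state, and in any case only one inequality. The ``reverse inequality'' you invoke is never produced; it is precisely what needs to be proved, and it is equivalent to $\bar Z\ge 0$ --- so Route B is circular. Moreover, your last sentence is confused: $F_t(X_t,z)=0$ is an integral condition in the $y$-variable and says nothing about the sign of $B$ at $X_t$; it forces $z\ge 0$ only on the event $\mu_t(\{B(t,\cdot,\mu_t)>0\})>0$, not on $\{B(t,X_t,\mu_t)<0\}$.

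Your Route A is the right idea and is essentially what the paper does, but you underestimate the main difficulty. The diffusion coefficient $\Sigma$ is genuinely discontinuous in $x$ across $\{b+c=0\}$, so a pathwise comparison/flow-monotonicity argument is not available off the shelf; invoking ``Krylov's comparison'' is not enough here. The paper instead proves $\bar Z\ge 0$ in two steps: first, in the case where $\Sigma$ is smooth, via the Clark--Ocone representation $\bar Z_t=\E[U'(X^{t,x}_T)J^{t,x}_T]\,\Sigma(t,X_t)$ with $J^{t,x}$ the (positive) tangent process, which is nonnegative since $U'\ge 0$; second, in the general case, by approximating $\Sigma$ by smooth $\Sigma^n$, obtaining $\bar Z^n\ge 0$ from the first step, and then passing to the limit via a weak-convergence argument to show $\E\big[\int_0^T \bar Z_t\varphi_t\,\mathrm{d}t\big]\ge 0$ for all nonnegative test processes $\varphi$. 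This approximation step is the actual technical content of the lemma and is missing from your sketch.
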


\proof Recall that $X$ is square integrable by Theorem \ref{thm:SDE}. Then,  it follows from the Lipschitz property of $U$ that the terminal condition $U(X_T)\in\L^2(\P)$. Also, notice that $F$ is Lipschitz in $z$ by Lemma \ref{lem:cvarphi}, and that $F_t(X_t,0)=0$. We have thus verified the standard conditions of \citeauthor*{pardoux1990adapted}
\cite{pardoux1990adapted} for the existence of a unique solution for the backward SDE \eqref{BSDE}. 

We next prove that $Z \ge 0$, $\P$--a.e. Notice that under the guess $Z\ge 0$, the generator of the last backward SDE vanishes, and we are reduced 
to the candidate solution of 
    \b*
    \overline{Y}_T=U(X_T),
    &\mbox{and}&
    \mathrm{d}\overline{Y}_t
    =
    \overline{Z}_t\Sigma(t,X_t,\mu_t)
        \,\mathrm{d}W_t,~~\P\mbox{--a.s.}
    \e*
The $\overline{Y}$--component of the unique solution of this equation is the $\H^2(\mu)$--process $\overline{Y}_t=\E^{\P}\big[U(X_T)|\Fc_t\big]$, $t\le T$, and we now prove that $\overline{Z}\ge 0$, $\mathrm{d}t\otimes \mathrm{d}\P$--a.e. This would imply that $(\overline{Y},\overline{Z})$ is also the solution of the backward SDE \eqref{BSDE}, and that $(Y,Z)=(\overline{Y},\overline{Z})$ by uniqueness.

\medskip    
\no {\it The smooth case.}
Let us first assume that $\Sigma$ admits bounded derivative w.r.t. $x$. By the expression of $B$ in Theorem \ref{thm:main} (i) together with Lemma 
\ref{lem:cvarphi}, $B$ is uniformly Lipsichtz in $x$, and therefore has a 
bounded weak derivative in $x$ by Rademacher Theorem. By a direct density 
argument, we may use a slightly extended version of the Clark-Ocone formula, see e.g. \citeauthor*{nualart2006malliavin} \cite[Proposition 1.5]{nualart2006malliavin}, to obtain the explicit representation of the $\overline{Z}-$component:
\begin{align*}
    \overline{Z}_t=\vartheta (t,X_t) \Sigma(t,X_t),\;\P\mbox{--a.e.}\;\mbox{for all}\;t \in [0,T],\;\mbox{with}\;\vartheta(t,x):=\E^{\P}[U'(X^{t,x}_T)J^{t,x}_T],
\end{align*}
where $U'$ is the weak derivative of the Lipschitz function $U$, and
    \b*
        X^{t,x}_s
        &=& 
        x+\int_t^s B(r,X^{t,x}_r,\mu_r) \mathrm{d}r 
        + \int_t^s\Sigma(r,X^{t,x}_r,\mu_r) \mathrm{d}W_r
         \\ 
 J^{t,x}_s
 &=& 
 1+\int_t^s \partial_x B(r,X^{t,x}_r,\mu_r) J^{t,x}_r \mathrm{d}r 
 + \int_t^s\partial_x\Sigma(r,X^{t,x}_r,\mu_r) J^{t,x}_r \mathrm{d}W_r.
    \e*
Notice the tangent process $J^{t,x}$ is positive and $U$ is nondecreasing, we deduce that $\overline{Z}_t \ge 0,$ $\P$--a.e. for all $t \in [0,T]$.

\medskip
\no {\it The general case.} We now relax the smoothness of $\Sigma$ assumed in the previous step. By an easy extension of \citeauthor*{MFD-2020} \cite[Lemma 4.2.]{MFD-2020}, for each $n \ge 1,$ there exists smooth $\Sigma^n,$ with $X^n$ solution of 
    \begin{align*} 
        X^n_\cdot=X_0 + \int_0^\cdot B(t,X^n_t,\mu_t) \mathrm{d}t + \int_0^\cdot \Sigma^n(t,X^n_t,\mu_t) \mathrm{d}W_t\;\P\;\mbox{--a.e. with}\;\mu^n=\Lc^{\P}(X^n),
    \end{align*}
    such that $\lim_{n \to \infty} \mu^n=\mu$ in $\Wc_2.$
    Let $(\overline{Y}^n,\overline{Z}^n)$ be the unique solution of
\begin{align*}
    \overline{Y}^n_t
    =
    U(X^n_T) - \int_t^T \overline{Z}^n_s \Sigma^n(s,X^n_s,\mu_s) \mathrm{d}W_s,\;\mbox{for all}\;t \in [0,T],\;\P\mbox{--a.e.}
\end{align*}  
By the previous smooth case, we know that $Z^n \ge 0$. In order to complete the proof, we now show that 
    \begin{eqnarray*}
        \E^{\P} \bigg[\int_0^T \overline{Z}_t\;\varphi_t \;\mathrm{d}t \bigg] 
        \ge 0
        &\mbox{for all non--negative valued bounded $\F^X-$predictable}&
        \varphi,
    \end{eqnarray*}
    where $\F^X:=\big( \sigma\{X_s,\;s \le t \} \big)_{t \in [0,T]}.$
    It is enough to show this result for processes of type $\varphi_t=\phi_t(X_{t \wedge \cdot}) \Sigma(t,X_t,\mu_t)^2$ s.t. there exists $t_0=0 \le t_1 \le \dots \le t_q=T,$ with $\phi_t(X_{t \wedge \cdot})=\phi_{t_k}(X_{t_k \wedge \cdot})$ for each $t \in [t_k,t_{k+1})$, and for all 
$t \in [0,T],$ $\phi(t,\cdot)$ is continuous. Denoting simply $\Sigma_t:=\Sigma(t,X_t,\mu_t)$, we compute that 
     \begin{align*}
        &\E^{\P} \bigg[\int_0^T \overline{Z}_t\phi_t(X_{t \wedge \cdot}) \Sigma_t^2 \mathrm{d}t \bigg] 
        =
        \E^{\P} \bigg[\int_0^T \overline{Z}_t\Sigma_t\mathrm{d}W_t \;\int_0^T\phi(t, X_{t \wedge \cdot}) \Sigma_t \mathrm{d}W_t \bigg]
        \\
        &=
        \E^{\P} \bigg[\big( U(X_T) - \overline{Y}_0 \big) \;\int_0^T\phi(t, X_{t \wedge \cdot}) \Sigma_t \mathrm{d}W_t \bigg]
        \\
        &=
        \E^{\P} \bigg[ U(X_T) \;\int_0^T\phi(t, X_{t \wedge \cdot}) \Sigma_t \mathrm{d}W_t \bigg]
        \\
        &=
        \E^{\P} \bigg[ U(X_T) \;\int_0^T\phi(t, X_{t \wedge \cdot}) \Big( 
\mathrm{d}X_t- B(t,X_t,\mu_t) \mathrm{d}t \Big)\bigg]
        \\
        &=
        \E^{\P} \bigg[ U(X_T) \;\sum_{k=0}^{q-1}\phi(t_k, X_{t_k \wedge 
\cdot}) \Big( X_{t_{k+1}}-X_{t_k}- \int_{t_k}^{t_{k+1}} B(t,X_{t},\mu_{t}) \mathrm{d}t \Big)\bigg]
        \\
        &=
        \lim_{n \to \infty}\E^{\P} \bigg[ U(X^n_T) \;\sum_{k=0}^{q-1}\phi(t_k, X^n_{t_k \wedge \cdot}) \Big( X^n_{t_{k+1}}-X^n_{t_k}- \int_{t_k}^{t_{k+1}} B(t,X^n_{t},\mu_{t}) \mathrm{d}t  \Big)\bigg]
        \\
        &=
        \lim_{n \to \infty} \E^{\P} \bigg[ U(X^n_T) \;\int_0^T\phi(t, X^n_{t \wedge \cdot}) \Big( \mathrm{d}X^n_t- B(t,X^n_t,\mu_t) \mathrm{d}t \Big)\bigg],
    \end{align*}
by the convergence in distribution of $X^n$ towards $X$, and the fact that $\phi$ is a simple process. Then:
\begin{eqnarray*}
        \E^{\P} \bigg[\int_0^T \overline{Z}_t\phi_t(X_{t \wedge \cdot}) \Sigma_t^2 \mathrm{d}t \bigg]
        &\!\!\!\!=&\!\!\!\!
        \lim_{n \to \infty} \E^{\P} \bigg[ \big(U(X^n_T)-\overline{Y}^n_0 
\big) \;\int_0^T\phi(t, X^n_{t \wedge \cdot}) \Sigma^n(t,X^n_t,\mu_t) \mathrm{d}W_t\bigg]
        \\
        &\!\!\!\!=&\!\!\!\!
        \lim_{n \to \infty} \E^{\P} \bigg[\int_0^T \overline{Z}^n_t\phi(t, X_{t \wedge \cdot}) \Sigma^n(t,X^n_t,\mu_t)^2 \mathrm{d}t \bigg] \;\ge\;0. 
    \end{eqnarray*}
    As $\overline{Z}$ is an $\F^X-$predictable process, by taking $\varphi_t=\mathbf{1}_{\overline{Z}_t \le 0},$ we find $\overline{Z}_t\mathbf{1}_{\overline{Z}_t \le 0}=0$ $\mathrm{d}t \otimes \mathrm{d}\P$--a.e. We deduce that $\overline{Z}_t \ge 0$ $\mathrm{d}t \otimes \mathrm{d}\P$--a.e. We can conclude as in the smooth situation i.e. $(\overline{Y},\overline{Z})=(Y,Z)$ and $Z \ge 0.$
\ep

The next result shows that the pair $(\pi^*,\mu)$ is a mean field game equilibrium of the mutual holding problem in the sense of Definition \ref{def:MFG}, thus completing the proof of Theorem \ref{thm:main}.

\begin{Proposition} \label{prop:optimization}
We have $\E^\P[U(X_T)] = J_{\pi^*,\mu}(\pi^*) \ge J_{\pi^*,\mu}(\beta)$ 
for all $\beta\in\Ac$. 
\end{Proposition}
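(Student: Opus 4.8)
The plan is to run a standard verification (martingale optimality principle) argument built on the backward SDE \eqref{BSDE} of Lemma \ref{lem:BSDE}. Fix an arbitrary admissible deviation $\beta\in\Ac$. First I would write down the $\P^\beta_{\pi^*,\mu}$--dynamics of the state $X$ under the deviation: using the change of measure of Subsection \ref{sect:MFGformulation} and the fact that $\pi^*$ is frozen, these are
\b*
\mathrm{d}X_t
=
\frac{\big[b(t,X_t,\mu_t)+\int_\R\beta(t,X_t,y)B(t,y,\mu_t)\mu_t(\mathrm{d}y)\big]\mathrm{d}t+\sigma(t,X_t,\mu_t)\mathrm{d}W^{\pi^*,\mu,\beta}_t}{1+\1_{\{B(t,X_t,\mu_t)\ge 0\}}}.
\e*
Then, along $(Y,Z)$ the solution of \eqref{BSDE}, consider the process $Y$ evaluated on these deviated dynamics. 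Since $\mathrm{d}Y_t=Z_t\,\mathrm{d}X_t - H_t(X_t,\mu_t,Z_t)\,\mathrm{d}t$ by construction \eqref{BSDE0}, substituting the deviated $\mathrm{d}X_t$ gives
\b*
\mathrm{d}Y_t
=
\frac{Z_t\big[b(t,X_t,\mu_t)+\int_\R\beta(t,X_t,y)B(t,y,\mu_t)\mu_t(\mathrm{d}y)\big]-H_t(X_t,\mu_t,Z_t)}{1+\1_{\{B(t,X_t,\mu_t)\ge 0\}}}\,\mathrm{d}t
+ \frac{Z_t\sigma(t,X_t,\mu_t)}{1+\1_{\{B(t,X_t,\mu_t)\ge 0\}}}\,\mathrm{d}W^{\pi^*,\mu,\beta}_t.
\e*
By the definition of the Hamiltonian $H$ as a supremum over $\beta(\cdot)$, the drift term is $\le 0$ for every choice of $\beta$, with equality exactly when $\beta(t,X_t,y)=\widehat\beta(t,y,\mu_t,Z_t)=\1_{\{Z_tB(t,y,\mu_t)\ge0\}}$. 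Since $Z\ge0$ $\P$--a.e. by Lemma \ref{lem:BSDE}, this maximizer reduces to $\1_{\{B(t,y,\mu_t)\ge0\}}=\pi^*(t,X_t,y)$, so $\beta=\pi^*$ achieves equality and the drift vanishes identically.

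With this in hand, the $\P^\beta_{\pi^*,\mu}$--local martingale part being a true martingale (here I would invoke the $\H^2$ integrability of $Z$ from Lemma \ref{lem:BSDE}, boundedness of $1+\1_{\{B\ge0\}}$, and linear growth of $\sigma$, plus square integrability of $X$), taking $\P^\beta_{\pi^*,\mu}$--expectations yields
$$
\E^{\P^\beta_{\pi^*,\mu}}[U(X_T)]
=
\E^{\P^\beta_{\pi^*,\mu}}[Y_T]
\le
Y_0
=
\E^\P[U(X_T)],
$$
where the last equality uses $Y_0=\E^\P[U(X_T)\,|\,\Fc_0]=\E^\P[U(X_T)]$ from Lemma \ref{lem:BSDE} together with the fact that $Y_0$ is deterministic (initial law fixed to $\nu$), and the fact that $\P^\beta_{\pi^*,\mu}$ agrees with $\P$ on $\Fc_0$. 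For $\beta=\pi^*$ the drift is zero, so $\E^{\P^{\pi^*}_{\pi^*,\mu}}[U(X_T)]=Y_0=\E^\P[U(X_T)]$; but under $\pi^*$ the change of measure is trivial ($\psi\equiv0$), hence $J_{\pi^*,\mu}(\pi^*)=\E^\P[U(X_T)]$. Combining the two displays gives $J_{\pi^*,\mu}(\pi^*)=\E^\P[U(X_T)]\ge J_{\pi^*,\mu}(\beta)$ for all $\beta$, and the identity $\E^\P[U(X_T)]=\widehat\E^\mu[U(\widehat X_T)]$ of Theorem \ref{thm:main}(iii) follows since $\mu=\Lc^\P(X)$.

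The main obstacle I anticipate is justifying the true--martingale property and the admissibility/integrability of the change of measure uniformly in $\beta$: one needs that $\psi^\beta_{\pi^*,\mu}$ in \eqref{psi} satisfies a Novikov-type or Girsanov condition so that $\P^\beta_{\pi^*,\mu}$ is a genuine probability equivalent to $\P$, and that $X$ remains square integrable under the new measure so that $\int_0^\cdot Z_t\Sigma_t\,\mathrm{d}W^{\pi^*,\mu,\beta}_t$ is a martingale rather than merely a local one. This is where the standing hypotheses are used: $\sigma$ is bounded away from zero and of quadratic growth, $\pi^*,\beta$ are $[0,1]$--valued, $B^\mu$ is $\mu$--square integrable, and $U$ is Lipschitz; a localization by stopping times together with uniform integrability estimates (exploiting that the denominator $1+\1_{\{B\ge0\}}$ is bounded in $[1,2]$) closes the gap. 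Everything else is the routine verification bookkeeping sketched above.
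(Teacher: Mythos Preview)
Your approach is essentially the paper's verification argument: use the BSDE from Lemma~\ref{lem:BSDE}, write $\mathrm{d}Y_t=Z_t\,\mathrm{d}X_t-H_t\,\mathrm{d}t$ under $\P^\beta_{\pi^*,\mu}$, observe the drift is nonpositive by definition of $H$ with equality at $\beta=\pi^*$ (since $Z\ge 0$), and conclude by taking expectations. Two small points. First, in your display for $\mathrm{d}Y_t$ the term $H_t$ should not sit inside the fraction: the correct drift is $\dfrac{Z_t\big[b+\int\beta B\,\mathrm{d}\mu_t\big]}{1+\1_{\{B\ge 0\}}}-H_t$, which of course is still $\le 0$ because $H$ already carries the same denominator. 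Second, $Y_0=\E^\P[U(X_T)\mid\Fc_0]$ is in general $\Fc_0$--measurable, not deterministic; your parenthetical ``$\P^\beta_{\pi^*,\mu}$ agrees with $\P$ on $\Fc_0$'' is the right salvage, giving $\E^{\P^\beta_{\pi^*,\mu}}[Y_0]=\E^\P[Y_0]=\E^\P[U(X_T)]$.

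On the integrability issue you correctly flag: the paper does not try to show the stochastic integral is a true $\P^\beta_{\pi^*,\mu}$--martingale directly from $Z\in\H^2(\P)$ (which would not suffice, since one needs integrability under the \emph{new} measure). Instead it localizes via a sequence $(\tau^n)$, obtains $\E^{\P^\beta_{\pi^*,\mu}}[Y_{T\wedge\tau^n}]\le \E^\P[U(X_T)]$, and passes to the limit by dominated convergence using $\sup_{t\le T}|Y_t|\in L^1(\P^\beta_{\pi^*,\mu})$; this last point follows because the Girsanov kernel $\psi$ is bounded (so the density has moments of all orders) and $\sup_{t\le T}|Y_t|\in L^2(\P)$, then H\"older. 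This is exactly the ``localization plus uniform integrability'' fix you anticipate in your final paragraph.
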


\proof For an arbitrary $\beta\in\Ac$, we compute that
 \b*
 J_{\pi^*,\mu}(\beta)
 &=&
 \E^{\P_{\pi^*,\mu}^\beta}\big[ U(X_T) \big]
 \;=\;
 Y_0
 + \E^{\P_{\pi^*,\mu}^\beta}\bigg[\int_0^T Z_t \mathrm{d}tX_t-H_t(X_t,\mu_t,Z_t) \mathrm{d}t \bigg],
 \e*
with $Y_t=\E^\P[U(X_T)|\Fc_t]$, for all $t\in[0,T]$, by Lemma \ref{lem:BSDE}. Let $(\tau^n)_n$ be a localizing sequence for the $\P_{\pi^*,\mu}^\beta-$local martingale $\int_0^. Z_t\Sigma(t,X_t,\mu_t)\mathrm{d}W^{\pi^*,\mu,\beta}$. Then, it follows by direct substitution of the the dynamics of $X$ in terms of the $\P_{\pi^*,\mu}^\beta-$Brownian motion:
  \b*
 \E^{\P_{\pi^*,\mu}^\beta}\big[ Y_{T\wedge\tau^n} \big]
 &=&
 Y_0
 + \E^{\P_{\pi^*,\mu}^\beta}\bigg[\int_0^{T\wedge\tau^n}\!\!\Big(Z_t\frac{ b(t,X_t,\mu_t)+\int_\R\beta(t,X_t,y)B(t,y,\mu_t)\mu_t(\mathrm{d}y)}
    {1+\1_{\{B(t,X_t,\mu_t)\ge 0\}}}
    \\
    &&\hspace{80mm} - H_t(X_t,\mu_t,Z_t)                     \Big)\mathrm{d}t                         \bigg]
 \\
 &\le&
 Y_0 \;=\; \E^\P\big[U(X_T)\big],
 \e*
where the last inequality follows from the definition of $H$. Notice that 
the process $\psi$, defining the density of $\P_{\pi^*,\mu}^\beta$ with respect to $\mu$, is bounded. Then, as $\sup_{t\le T} |Y_t|\in L^2(\P)$, it follows from H\"older's inequality that $\sup_{t\le T} |Y_t|\in L^1(\P_{\pi^*,\mu}^\beta)$. We may then use the dominated convergence theorem to 
obtain that 
 \b*
 J_{\pi^*,\mu}(\beta)
 &=&
 \lim_{n\to\infty}\E^{\P_{\pi^*,\mu}^\beta}\big[ Y_{T\wedge\tau^n} \big]
 \;\le\;
 \E^\P\big[U(X_T)\big].
 \e*
By the same argument, we see that the control process $\pi^*\in\Ac$ allows to reach the last upper bound $J_{\pi^*,\mu}(\pi^*)=\E^\P\big[U(X_T)\big]$, thus completing the proof.
\ep

{\color{black}

\section{Justification of the the approximate Nash equilibrium} \label{sec:approx-nash}

This section is devoted to the proof of Theorem \ref{thm:nash_equilibria}. We start by providing some properties on the coefficients $(\mathbf{B},\mathbf{\Sigma}).$ We recall that $\Pi:=\Pi^N$ is defined in \eqref{piN} and \eqref{PiN}. In the next Lemma, for any $\xb:=(x^1,\cdots,x^N) \in \R^N,$ we denote $m^N(\xb):=\frac{1}{N} \sum_{j=1}^N \delta_{x^j}.$

\begin{Lemma} \label{lemma:coefficients}
    $\mathbf{(i)}$ There exists a constant $C>0$ (independent of $N$) s.t. for all $t\in[0,T],\mathbf{x}, \mathbf{y} \in \R^N,$ and $\beta\in[0,1]^N$, we have
    \begin{align*}
        &\big| \mathbf{\Sigma}^{k,k} (t,\Pi^{-i}_t(\beta),\mathbf{x}) 
        -
        \mathbf{\Sigma}^{k,k} (t,\Pi_t,\mathbf{x})\big|
        +
        \big| \mathbf{B}^k (t,\Pi^{-i}_t(\beta),\mathbf{x}) 
        -
        \mathbf{B}^k (t,\Pi_t,\mathbf{x})\big| 
        \le \frac{C}{N}\;\;\mbox{for all }k\neq i
        \\
        &\big| \mathbf{\Sigma}^{k,k} (t,\Pi_t,\mathbf{x}) 
        -
        \mathbf{\Sigma}^{k,k} (t,\Pi_t,\mathbf{y})\big|
        +
        \big| \mathbf{B}^k (t,\Pi_t,\mathbf{x}) 
        -
        \mathbf{B}^k (t,\Pi_t,\mathbf{y})\big| 
        \le
        C \Big(\phi^k+\frac{1}{N} \sum_{j=1}^N \phi^j\Big)(t,\mathbf{x},\mathbf{y}),
    \end{align*}
  where $\phi^j(t,\mathbf{x},\mathbf{y}):=\big|(b,\sigma)(t,x^j,m^N(\mathbf{x}))-(b,\sigma)(t,y^j,m^N(\mathbf{y}))\big|$, and
    \begin{align*}
        &\sup_{1 \le q \neq e \le N}\big| \mathbf{\Sigma}^{e,q}(t,\Pi_t,\mathbf{x}) \Big|+\sup_{1 \le k \le N}\bigg|\mathbf{\Sigma}^{k,k}(t,\Pi_t,\mathbf{x}) - \frac{\sigma(t,x^k,m^N(\xb)) }{1+\pi^k_t}\bigg| \le \frac{C}{N},
        \\
        &\big| \mathbf{\Sigma}^{k,k} (t,\Pi_t,\mathbf{x}) 
        \big|
        +
        \big| \mathbf{B}^k (t,\Pi_t,\mathbf{x}) 
        \big| 
        \le
        C \bigg[ \big|(b,\sigma)(t,x^k,m^N(\mathbf{x}))\big| + \frac{1}{N} \sum_{j=1}^N \big|(b,\sigma)(t,x^j,m^N(\mathbf{x})) \big| \bigg].
    \end{align*}
\end{Lemma}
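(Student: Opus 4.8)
The whole statement rests on one structural observation: the strategy matrix $\Pi=\Pi^N$ has the ``column--only'' form $\pi^{i,j}_t=\pi^j_t=\pi(t,x^j,m^N(\xb))$, and this collapses the linear systems \eqref{def:drift_equation}--\eqref{def:vol_equation} to scalar fixed--point equations. Indeed, writing $\bar B:=\frac1N\sum_j\pi^jB^j$ and $\bar\Sigma^q:=\frac1N\sum_j\pi^j\Sigma^{j,q}$, the quantities $\frac1N\sum_j\gamma^{j,k}=\pi^k$ and $\frac1N\sum_j\gamma^{k,j}B^j=\bar B$ no longer depend on the row $k$, so \eqref{def:drift_equation}--\eqref{def:vol_equation} become $B^k(1+\pi^k)=b(t,x^k,m^N(\xb))+\bar B$ and $\Sigma^{k,q}(1+\pi^k)=\sigma(t,x^k,m^N(\xb))\1_{\{k=q\}}+\bar\Sigma^q$. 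Multiplying by $\pi^k/(1+\pi^k)=\tfrac12\pi^k$ (using $\pi^k\in\{0,1\}$) and averaging gives the closed forms $\bar B=\big[\frac1N\sum_j\tfrac12\pi^jb(t,x^j,m^N(\xb))\big]/D$ and $\bar\Sigma^q=\tfrac1{2N}\pi^q\sigma(t,x^q,m^N(\xb))/D$ with $D:=1-\frac1N\sum_j\tfrac12\pi^j\ge\tfrac12$. In particular $|\bar\Sigma^q|\le C/N$; combined with $\Sigma^{e,q}(1+\pi^e)=\bar\Sigma^q$ for $e\neq q$ and $\mathbf\Sigma^{k,k}(t,\Pi_t,\xb)=\frac{\sigma(t,x^k,m^N(\xb))}{1+\pi^k}+\frac{\bar\Sigma^k}{1+\pi^k}$, this yields the third displayed estimate, and the last (growth) estimate follows immediately from these formulas and $D\ge\tfrac12$. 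Moreover, since $\pi^j=\1_{\{b(t,x^j,m^N(\xb))+c(t,m^N(\xb))\ge0\}}$, the fixed--point equation for $\bar B$ is exactly the equation $z=\frac12\int(z+b(t,y,m^N(\xb)))^+m^N(\xb)(\mathrm dy)$ solved by $c(t,m^N(\xb))$, so the uniqueness in Lemma \ref{lem:cvarphi} forces $\bar B=c(t,m^N(\xb))$, whence $\mathbf B^k(t,\Pi_t,\xb)=B(t,x^k,m^N(\xb))$ and $\mathbf\Sigma^{k,k}(t,\Pi_t,\xb)=\Sigma(t,x^k,m^N(\xb))\,(1+O(1/N))$ with $B,\Sigma$ as in Theorem \ref{thm:main}\,(i).

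For the first displayed estimate, passing from $\Pi$ to $\Pi^{-i}(\beta)$ only modifies row $i$. For $k\neq i$ the row $\gamma^{k,\cdot}=\pi^{\cdot}$ is untouched, so the same reduction gives $B'^k(1+\pi^k)=b(t,x^k,m^N(\xb))+\bar B'+O(1/N)$, where $\bar B':=\frac1N\sum_j\pi^jB'^j$ (here one uses the a priori bound $|B'^k|\le C|\vec b|_\infty$ coming from diagonal dominance, together with the quadratic growth of $b$, to control the remainders). Isolating the single modified term $j=i$, which contributes $O(1/N)$, one finds that $\bar B'$ solves the same contracting equation as $\bar B$ up to an $O(1/N)$ error, and since that equation has derivative $\ge\tfrac12$ (Lemma \ref{lem:cvarphi}), $|\bar B'-\bar B|\le C/N$; consequently $|\mathbf B^k(t,\Pi^{-i}_t(\beta),\xb)-\mathbf B^k(t,\Pi_t,\xb)|\le C/N$ for $k\neq i$. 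The identical computation with $\bar\Sigma^q$ (already $O(1/N)$) gives the diffusion part.

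For the second displayed estimate, writing $g(r):=\tfrac12 r^+-r^-$, which is $1$--Lipschitz, we have $\mathbf B^k(t,\Pi_t,\xb)=g\big(b(t,x^k,m^N(\xb))+c(t,m^N(\xb))\big)$, so it remains to bound $|c(t,m^N(\xb))-c(t,m^N(\yb))|$; applying the sharp Lipschitz bound of Lemma \ref{lem:cvarphi} with the diagonal coupling $x^j\leftrightarrow y^j$ of $m^N(\xb)$ and $m^N(\yb)$ gives $|c(t,m^N(\xb))-c(t,m^N(\yb))|\le\frac2N\sum_j\phi^j$, hence the bound for $\mathbf B^k$. The main obstacle is the diffusion part: modulo an $O(1/N)$ term one has $\mathbf\Sigma^{k,k}(t,\Pi_t,\xb)=\sigma(t,x^k,m^N(\xb))\big(1-\tfrac12\1_{\{b(t,x^k,m^N(\xb))+c(t,m^N(\xb))\ge0\}}\big)$, and the indicator factor is discontinuous, so this is not Lipschitz in the $(b,\sigma)$--values without further input. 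I would therefore split according to whether the two indicators $\1_{\{b(t,x^k,m^N(\xb))+c(t,m^N(\xb))\ge0\}}$ and $\1_{\{b(t,y^k,m^N(\yb))+c(t,m^N(\yb))\ge0\}}$ agree: on the agreement set the map is genuinely Lipschitz in $(b,\sigma)$ and the preceding estimates close the bound; the disagreement set is controlled using Assumption \ref{assum:bsigma}\,(ii), i.e.\ that for a.e.\ $t$ and a.e.\ $x^k$ the point $(x^k,m^N(\xb))$ is a continuity point of $(x',m')\mapsto\1_{\{b(t,x',m')+\eta\ge0\}}$ (with $\eta=c(t,m^N(\xb))$), together with continuity of $c(t,\cdot)$ --- the same mechanism used to accommodate the discontinuous diffusion coefficient $\Sigma$ in the weak--existence argument of Section \ref{sec:SDE}. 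This is where the genuine work lies; the remaining estimates are elementary consequences of the closed forms above.
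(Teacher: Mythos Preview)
Your reduction of the linear systems \eqref{def:drift_equation}--\eqref{def:vol_equation} to scalar fixed--point equations via the column--only structure of $\Pi$ is exactly the mechanism the paper uses (their $A^i,Q^{i,q}$ are your $\bar B,\bar\Sigma^q$ up to normalisation), and your treatment of the first, third and fourth displayed estimates is correct and essentially identical to theirs.

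The gap is in the second displayed estimate, and it comes from a misreading of the statement. In $\mathbf B^k(t,\Pi_t,\mathbf x)$ and $\mathbf B^k(t,\Pi_t,\mathbf y)$ the matrix $\Pi_t$ is the \emph{same} in both terms: it is the process value $\pi^j_t=\pi(t,X^j_t,\mu^N_t)$, a fixed random element of $\{0,1\}^{N\times N}$, and is \emph{not} recomputed from the dummy variable $\mathbf x$ via $\1_{\{b(t,x^j,m^N(\mathbf x))+c(t,m^N(\mathbf x))\ge 0\}}$. This is clear from the definition $\mathbf B(t,\Gamma,\mathbf x):=M(\Gamma)^{-1}\vec b(t,\mathbf x,m^N(\mathbf x))$ in \eqref{eq:def_coef}, where the second and third arguments are independent inputs, and from the paper's proof, which stresses that the constant is ``independent of $(N,i,\Pi,\beta)$''. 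With $\Pi_t$ frozen, your own closed forms show that $\mathbf B^k,\mathbf\Sigma^{k,k}$ are \emph{affine} in the data $(b^j,\sigma^j)_{1\le j\le N}$ with coefficients bounded uniformly in $\pi^j_t\in\{0,1\}$ and $N$; the second estimate is then a one--line consequence and there is no indicator discontinuity to handle. Correspondingly, your identity $\bar B=c(t,m^N(\mathbf x))$ and $\mathbf B^k(t,\Pi_t,\mathbf x)=B(t,x^k,m^N(\mathbf x))$ holds only at the special point $\mathbf x=\mathbf X_t$ (this is exactly how it is used later, in Step~1 of the proof of Proposition~\ref{prop:limit_property}), not for general $\mathbf x$.

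Finally, the proposed repair via Assumption~\ref{assum:bsigma}\,(ii) could not rescue the estimate as you stated it anyway: that assumption gives almost--everywhere continuity in $(x,m)$, which cannot yield a bound required to hold for \emph{all} $\mathbf x,\mathbf y\in\R^N$. Once you read $\Pi_t$ as fixed, this whole detour disappears.
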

\begin{proof}
We organise the proof in three steps.
\\
{\it Step 1:} We first derive the estimates for the drift coefficients $\mathbf{B}^k$. For simplicity of notation, we denote $B^{i,k}_t:=  \mathbf{B}^k (t,\Pi^{-i}_t(\beta),\mathbf{x})$, $b^k_t:=b(t,x^k,m^N)$, $k=1,\cdots,N,$ and we recall that
$$
       B^{i,i}_t=\frac{1}{N} \!\sum_{ j \neq i}\! \beta^j_t B^{i,j}_t
        - \frac{1}{N} \!\sum_{j \neq i} \pi^i_t B^{i,i}_t+ b^i_t
        =
        \frac{1}{N} \!\sum_{j \neq i}\! \beta^j_t B^{i,j}_t
        \!-\! \frac{N\!-\!1}{N}\!\pi^i_t B^{i,i}_t \!+\! b^i_t.
$$
This provides
    \begin{eqnarray}\label{Bii}
        B^{i,i}_t
        =
        \frac{1}{r^i_t} \Big(\frac{1}{N} \sum_{j \neq i} \beta^j_t B^{i,j}_t+ b^i_t\Big),
        &\mbox{where}&
        r^j_t:= 1 + \frac{N-1}{N} \pi^j_t.
    \end{eqnarray}
Similarly, we have for  $k \neq i$ that
    \begin{align*}
        B^{i,k}_t&=\frac{1}{N} \sum_{j=1}^N \pi^j_t B^{i,j}_t
        - \frac{1}{N} \big(\beta^k_t B^{i,k}_t+ \sum_{j \neq i} \pi^k_t B^{i,k}_t \big) 
        + b^k_t\\
        &=\frac{1}{N} \sum_{j=1}^N \pi^j_t B^{i,j}_t
        -\frac{1}{N} \beta^k_t B^{i,k}_t - \frac{N-1}{N} \pi^k_t B^{i,k}_t + b^k_t,
        \\
        &=
        \frac{1}{N} \sum_{j \neq i} \pi^j_t B^{i,j}_t 
         +\frac{1}{N}  \frac{\pi^i_t}{r^i_t } \bigg[ \frac{1}{N} \sum_{j \neq i}  \beta^j_t B^{i,j}_t+ b^i_t  \bigg] 
        - \frac{\beta^k_t}{N} B^{i,k}_t - \frac{N-1}{N} \pi^k_t B^{i,k}_t + b^k_t,
    \end{align*}
    by substituting the expression of $B^{i,i}$ from \eqref{Bii}. Then, we get
    \begin{align}\label{Bik}
        B^{i,k}_t
        =
        \frac{A^{i}(t,\Pi^{-i}_t(\beta),\mathbf{x})
        + \frac{1}{N} \frac{\pi^i_t}{r^i_t }b^i_t \!+\! b^k_t}{r^k_t + \frac{\beta^k_t}{N}},
    \end{align}
where we denoted $A^{i}(t,\Pi^{-i}_t(\beta),\mathbf{x})
        :=
        \frac{1}{N} \sum_{j \neq i} 
        (\pi^j_t \!+\! \frac{\beta^j_t\pi^i_t}{Nr^i_t } ) B^{i,j}_t$. Multiplying the left hand side by $(\pi^k_t \!+\! \frac{\beta^k_t\pi^i_t}{Nr^i_t } )$ and summing over $k\neq i$, we recover the expression of $A^{i,N}$:
    \begin{eqnarray}\label{AiN}
        A^{i}(t,\Pi^{-i}_t(\beta),\mathbf{x})
        =
        \frac{\frac1N\sum_{j\neq i}a^{i,j}(b^j+\frac1N\frac{\pi^ib^i}{r^i})
               }
               {1-\frac1N\sum_{j\neq i}a^{i,j}},
       &\mbox{with}&
       a^{i,j}:=\frac{\pi^j_t \!+\! \frac{\beta^j_t\pi^i_t}{Nr^i_t }}{r^j+\frac{\beta^j}{N}}.
    \end{eqnarray}
From this expression, we directly see the existence of a constant $C$ independent of $(N,i,\Pi,\beta)$ such that for all $(\mathbf{x},\mathbf{y}) \in \R^N \x \R^N,$
    \begin{eqnarray*}
        \big|A^{i}(t,\Pi^{-i}_t(\beta),\mathbf{x}) - A^{i}(t,\Pi^{-i}_t(\beta),\mathbf{y}) \big| 
        &\le& 
        C  \frac{1}{N} \sum_{k=1}^N \big|b(t,x^k,m^N(\mathbf{x}))-b(t,y^k,m^N(\mathbf{y})) \big|
    \\
    \mbox{and}~~
    \big|A^{i}(t,\Pi^{-i}_t(\beta),\mathbf{x}) \big| 
    &\le& 
    C  \frac{1}{N} \sum_{k=1}^N \big|b(t,x^k,m^N(\mathbf{x})) \big|.
    \end{eqnarray*}
The corresponding estimates for $(B^{i,k})_{1 \le i, k \le N}$ are then immediately inherited by using the expressions \eqref{Bii} and \eqref{Bik}.

\vspace{3mm}
\noindent {\it Step 2.} We next estimate the difference $\mathbf{B}^k(t,\Pi^{-i}_t(\beta),\mathbf{x})-\mathbf{B}^k(t,\Pi_t,\mathbf{x}).$ Denoting $B^{k}_t:=\mathbf{B}^k(t,\Pi_t,\mathbf{x}),$ it follows from \eqref{Bii} and \eqref{Bik} that
    \begin{eqnarray*}
    B^{i,i}_t-B^{i}_t
    &=&
    \frac{1}{r^i_t} \frac{1}{N} \sum_{j \neq i} \Big((\beta^j_t-\pi^j_t)B^j_t 
                                                                          +\beta^j_t(B^{i,j}_t- B^j_t)\Big)
    \\
        B^{i,k}_t-B^{k}_t
        &=&
        \frac{A^{i}(t,\Pi^{-i}_t(\beta),\mathbf{x})-A^{i}(t,\Pi_t,\mathbf{x})}
               {r^k_t + \frac{\beta^k_t}{N}},
        ~~\mbox{for}~~k\neq i.
    \end{eqnarray*}
As $\pi$ and $\beta$ are valued in $[0,1]$, it follows from the explicit expression \eqref{AiN} that the exists a constant $C$ independent of $(N, i, \pi,\beta)$ such that
         $$
         |B^{i,k}_t-B^k_t| \le \frac{C}{N}\Big[1 +|B^{i,k}_t(\pi^i_t+\beta^k_t)| \Big],
         ~~\mbox{for all}~~k\neq i.
         $$
{\it Step 3.} We next focus on the estimates of the diffusion coefficient. Denote similarly
$\Sigma^{i,k,q}_t:=  \mathbf{\Sigma}^{k,q} (t,\Pi^{-i}_t(\overline{\beta}),\mathbf{x})$, $\sigma_t^k:=\sigma(t,x^k,m^N)$, and recall that
    \begin{eqnarray}\label{Sigma(i)}
    \Sigma^{i,i,q}_t
    &=&
    \frac{1}{N} \sum_{j \neq i}\beta^j_t\Sigma^{i,j,q}_t
        - \frac{1}{N} \sum_{j \neq i} \pi^i_t \Sigma^{i,i,q}_t+ \sigma_t^i \mathbf{1}_{\{q=i\}},
        \\
        \Sigma^{i,k,q}_t
        &=&
        \frac{1}{N} \sum_{j=1}^N \pi^j_t \Sigma^{i,j,q}_t
        - \frac{1}{N} \big( \sum_{j \neq i}^N \pi^k_t \Sigma^{i,k,q}_t + \beta^k_t \Sigma^{i,k,q}_t \big) + \sigma_t^k \mathbf{1}_{\{q=k\}},\;\;k \neq i,~1\le q\le N.
    \nonumber
    \end{eqnarray}
Similar calculations as in Step 1 provide
    \begin{eqnarray}\label{Sigma(ii)}
        \Sigma^{i,k,q}_t
        &=&
        \frac{Q^{i,q}(t,\Pi^{-i}_t(\beta),\mathbf{X}_t) 
                + \sigma_t^k\mathbf{1}_{\{q=k\}}
                +\frac1N\frac{\pi^i_t}{r^i_t} \sigma_t^i\mathbf{1}_{\{q=i\}}}
        {r^k_t + \frac{\beta^k_t}{N}}, ~~\mbox{for}~~k\neq q,
    \end{eqnarray}
    where
    \begin{eqnarray}\label{Sigma(iii)}
        Q^{i,q}(t,\Pi^{-i}_t(\beta),\mathbf{x})
        &=&
        \frac{\frac1N\sum_{j\neq i}a^{i,j}(\sigma_t^j\mathbf{1}_{\{q=j\}}
                +\frac1N\frac{\pi^i_t}{r^i_t}\sigma_t^i \mathbf{1}_{\{q=i\}})
               }
               {1-\frac1N\sum_{j\neq i}a^{i,j}},
    \end{eqnarray}
with the same $r^j$ and $a^{i,j}$ as those defined in Step 1. We then directly estimate that
    \begin{eqnarray*}
        \big|Q^{i,q}(t,\Pi^{-i}_t(\overline{\beta}),\mathbf{x}) - Q^{i,q}(t,\Pi^{-i}_t(\overline{\beta}),\mathbf{y}) \big|
        &\le&  \frac{C}{N} \big|\sigma(t,x^q,m^N(\mathbf{x}))\mathbf-\sigma(t,y^q,m^N(\mathbf{y})) \big| ,
        \\
        \big|Q^{i,q}(t,\Pi^{-i}_t(\overline{\beta}),\mathbf{x}) \big| 
        &\le&   \frac{C}{N}  \big|\sigma(t,x^q,m^N(\mathbf{x})) \big| ,
    \end{eqnarray*}
which induce the required corresponding estimates for the diffusion coefficients..
     
We finally estimate the difference $\mathbf{\Sigma}(t,\Pi^{-i}_t(\beta),\mathbf{x})-\mathbf{\Sigma}(t,\Pi_t,\mathbf{x})$ by following the same argument as in Step 2, based on the calculations of Step 3.    
\qed
\end{proof}

\vspace{3mm}
{\it In the following, we assume that the coefficients $(\sigma,\mbox{b})$ are bounded, in order to avoid unnecessary technical details.}

\vspace{3mm}

Let us consider the sequence of processes $(\beta^N)_{N \in \N^*}:=(\beta^{N,1},\cdots,\beta^{N,N})_{N \in \N^*}.$ Recall the $\P-$dynamics of the corresponding $i$--deviated equity process $(X^{i,1},\cdots,X^{i,N})$ satisfies: 
\begin{align*}
    \mathrm{d}X^{i,k}_t
    =
    \mathbf{B}^{k}(t,\Gamma_t^{-i}(\beta^N_t),\mathbf{X}^{i}_t) \mathrm{d}t
    &+
    \sum_{q\neq i}\mathbf{\Sigma}^{k,q}(t,\Gamma_t^{-i}(\beta^N_t),\mathbf{X}^{i}_t) \mathrm{d}W^q_t
    \\
    &+\mathbf{\Sigma}^{k,i}(t,\Gamma_t^{-i}(\beta^N_t),\mathbf{X}^{i}_t)(\mathrm{d}W^i_t-\psi^i_t \mathrm{d}t),~~k =1,\cdots,N.
\end{align*}

\begin{Lemma} \label{lemma:deviate_player}
For all $k=1,\cdots,N$, we have
    \begin{align*}
        \E^{\P} \bigg[ \sup_{t \in [0,T]} |X^{i,k}_t-X^k_t|^2 \bigg] 
        \le
        \frac{C}{N}.
    \end{align*}
    Consequently, denoting by $\mu^{i,N}:=\frac1N\sum_{j=1}^N\delta_{X^{i,j}}$, $\mu^N :=\frac1N\sum_{j=1}^N\delta_{X^{j}}$ the corresponding empirical measures, we have
    \begin{align*}
        \lim_{N \to \infty}\frac{1}{N} \sum_{i=1}^N \E^{\P}\bigg[ \sup_{t \in [0,T]} \big|X^i_t-X^{i,i}_t \big|^2 \bigg]
        =
        0\;\;\mbox{and}\;\;\lim_{N \to \infty} \frac{1}{N}\sum_{i=1}^N\E^{\P} \Big[\Wc_2 \big( \mu^{i,N}, \mu^N\big) \Big]=0.
    \end{align*}
\end{Lemma}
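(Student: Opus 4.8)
The plan is to prove the per--component bound $\E^{\P}\big[\sup_{s\le t}|X^{i,k}_s-X^k_s|^2\big]\le C/N$ by a Gronwall argument on the $\P$--dynamics of the difference $X^{i,k}-X^k$, using the estimates of Lemma \ref{lemma:coefficients}, and then to read off the two limits by averaging over $i$ together with the elementary coupling bound for empirical measures. Write $\Gamma:=\Pi^N$, and for the fixed index $i$ and the given sequence $\beta^N=(\beta^{N,1},\dots,\beta^{N,N})$ set
\[
\Delta B^k_t:=\mathbf{B}^k(t,\Gamma^{-i}_t(\beta^N_t),\mathbf{X}^i_t)-\mathbf{B}^k(t,\Gamma_t,\mathbf{X}_t)-\mathbf{\Sigma}^{k,i}(t,\Gamma^{-i}_t(\beta^N_t),\mathbf{X}^i_t)\,\psi^i_t,\quad
\Delta\Sigma^{k,q}_t:=\mathbf{\Sigma}^{k,q}(t,\Gamma^{-i}_t(\beta^N_t),\mathbf{X}^i_t)-\mathbf{\Sigma}^{k,q}(t,\Gamma_t,\mathbf{X}_t),
\]
so that, since $X^{i,k}_0=X^k_0$, the $\P$--dynamics recalled just before the statement give $X^{i,k}_t-X^k_t=\int_0^t\Delta B^k_s\,\mathrm{d}s+\sum_{q=1}^N\int_0^t\Delta\Sigma^{k,q}_s\,\mathrm{d}W^q_s$. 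I first record that $\psi^i$ is bounded uniformly in $N$: under the standing boundedness of $(b,\sigma)$ the numerator $\mathbf{B}^i(t,\Gamma^{-i}_t(\beta^N_t),\mathbf{X}_t)-\mathbf{B}^i(t,\Gamma_t,\mathbf{X}_t)$ is bounded (see the formulas \eqref{Bii}--\eqref{AiN}), while $\mathbf{\Sigma}^{i,i}(t,\Gamma^{-i}_t(\beta^N_t),\mathbf{X}_t)=\sigma(t,X^i_t,\mu^N_t)/r^i_t+O(1/N)$ with $1\le r^i_t<2$ and $\sigma$ bounded away from $0$, hence it is bounded away from $0$ for $N$ large.

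The crux is the uniform estimate: for every $k$,
\[
|\Delta B^k_t|^2+\sum_{q=1}^N|\Delta\Sigma^{k,q}_t|^2\ \le\ \frac{C}{N}+C\,|X^{i,k}_t-X^k_t|^2+\frac{C}{N}\sum_{j=1}^N|X^{i,j}_t-X^j_t|^2 .
\]
For $k\neq i$, split each of $\mathbf{B}^k,\mathbf{\Sigma}^{k,k}$ into a ``change of strategy at frozen state $\mathbf{X}_t$'' piece, bounded by $C/N$ by the first inequality of Lemma \ref{lemma:coefficients}, and a ``change of state at frozen strategy $\Gamma^{-i}_t(\beta^N_t)$'' piece, bounded by $C\big(\phi^k_t+\tfrac1N\sum_j\phi^j_t\big)$ via the Lipschitz--in--$\mathbf{x}$ estimate of Lemma \ref{lemma:coefficients} — which, by the explicit formulas \eqref{Bii}--\eqref{AiN} and \eqref{Sigma(i)}--\eqref{Sigma(iii)}, holds for the deviated strategy as well — where $\phi^j_t:=|(b,\sigma)(t,X^{i,j}_t,\mu^{i,N}_t)-(b,\sigma)(t,X^j_t,\mu^N_t)|\le L\big(|X^{i,j}_t-X^j_t|+\Wc_2(\mu^{i,N}_t,\mu^N_t)\big)$; the remaining off--diagonal terms $\mathbf{\Sigma}^{k,q}_t$ ($q\neq k$) and $\mathbf{\Sigma}^{k,i}_t\psi^i_t$ are $O(1/N)$. \textbf{The one genuinely delicate case is $k=i$}, for which the raw drift difference $\mathbf{B}^i(t,\Gamma^{-i}_t(\beta^N_t),\mathbf{X}_t)-\mathbf{B}^i(t,\Gamma_t,\mathbf{X}_t)$ is only $O(1)$; this is precisely where the definition of $\psi^i$ enters, since $\mathbf{\Sigma}^{i,i}(t,\Gamma^{-i}_t(\beta^N_t),\mathbf{X}_t)\psi^i_t$ cancels it, leaving
\[
\Delta B^i_t=\big[\mathbf{B}^i(t,\Gamma^{-i}_t(\beta^N_t),\mathbf{X}^i_t)-\mathbf{B}^i(t,\Gamma^{-i}_t(\beta^N_t),\mathbf{X}_t)\big]+\psi^i_t\big[\mathbf{\Sigma}^{i,i}(t,\Gamma^{-i}_t(\beta^N_t),\mathbf{X}_t)-\mathbf{\Sigma}^{i,i}(t,\Gamma^{-i}_t(\beta^N_t),\mathbf{X}^i_t)\big],
\]
in which both brackets are now ``frozen strategy'' state differences, bounded by $C\big(\phi^i_t+\tfrac1N\sum_j\phi^j_t\big)$ (using again that $\psi^i$ is bounded); the diffusion row $(\Delta\Sigma^{i,q}_t)_q$ is treated the same way ($\mathbf{\Sigma}^{i,q}=O(1/N)$ for $q\neq i$, and $\Delta\Sigma^{i,i}_t$ is a frozen--strategy state difference up to $O(1/N)$, since $1/r^i_t-1/(1+\pi^i_t)=O(1/N)$). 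Squaring and using $\Wc_2(\mu^{i,N}_t,\mu^N_t)^2\le\tfrac1N\sum_l|X^{i,l}_t-X^l_t|^2$ yields the displayed estimate.

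To conclude, put $h_k(t):=\E^{\P}\big[\sup_{s\le t}|X^{i,k}_s-X^k_s|^2\big]$. By the Burkholder--Davis--Gundy inequality and Cauchy--Schwarz applied to the difference equation, together with the estimate above and the standing boundedness (so all $\Delta\Sigma^{k,q}$ are bounded and the stochastic integrals are true martingales), $h_k(t)\le \tfrac{CT}{N}+C\int_0^t\big(h_k(s)+\tfrac1N\sum_{j}h_j(s)\big)\mathrm{d}s$. Averaging over $k$ and setting $H(t):=\tfrac1N\sum_k h_k(t)$ gives $H(t)\le\tfrac{CT}{N}+2C\int_0^tH(s)\,\mathrm{d}s$, hence $H\le C'/N$ by Gronwall; plugging this back, $h_k(t)\le\tfrac{CT(1+C')}{N}+C\int_0^t h_k(s)\,\mathrm{d}s$, so a second Gronwall gives $h_k(T)\le C''/N$ for every $k$, which is the claimed bound. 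The first limit is then immediate, since $\tfrac1N\sum_{i}\E^{\P}\big[\sup_t|X^i_t-X^{i,i}_t|^2\big]\le C''/N$. For the second, the coupling $X^{i,j}\leftrightarrow X^j$ on path space gives $\Wc_2(\mu^{i,N},\mu^N)^2\le\tfrac1N\sum_j\sup_t|X^{i,j}_t-X^j_t|^2$, so $\E^{\P}\big[\Wc_2(\mu^{i,N},\mu^N)\big]\le\big(\tfrac1N\sum_j h_j(T)\big)^{1/2}\le(C''/N)^{1/2}$ by Cauchy--Schwarz, and therefore $\tfrac1N\sum_i\E^{\P}\big[\Wc_2(\mu^{i,N},\mu^N)\big]\le(C''/N)^{1/2}\to0$. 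As indicated, the main obstacle is the $k=i$ case — recognising that the Girsanov drift correction $-\mathbf{\Sigma}^{i,i}\psi^i$ is exactly what turns the $O(1)$ drift discrepancy into an $O\big(|X^{i,i}_t-X^i_t|+\Wc_2(\mu^{i,N}_t,\mu^N_t)\big)$ one — together with the bookkeeping of extracting from the proof of Lemma \ref{lemma:coefficients} the frozen--strategy Lipschitz bounds for $\Gamma^{-i}_t(\beta^N_t)$.
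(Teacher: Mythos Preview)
Your proof is correct and follows essentially the same route as the paper: write the $\P$--dynamics of $X^{i,k}-X^k$, use the Girsanov drift correction $-\mathbf{\Sigma}^{k,i}\psi^i$ to cancel the $O(1)$ discrepancy in the $k=i$ row (the paper writes this as $-\frac{\Sigma^{i,i,i}_t(\Xbb^i_t)-\Sigma^{i,i,i}_t(\Xbb_t)}{\Sigma^{i,i,i}_t(\Xbb_t)}(B^{i,i}_t(\Xbb_t)-B^i_t(\Xbb_t))$, which is algebraically your expression), invoke the estimates of Lemma~\ref{lemma:coefficients}, and close by a two--pass Gronwall via the empirical Wasserstein bound. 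Your explicit remark that the Lipschitz--in--$\mathbf{x}$ estimates of Lemma~\ref{lemma:coefficients} must also hold at the deviated strategy $\Gamma^{-i}_t(\beta^N_t)$ (which indeed follows from the formulas \eqref{Bii}--\eqref{AiN}, \eqref{Sigma(i)}--\eqref{Sigma(iii)}) is a point the paper uses but does not spell out.
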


\begin{proof}
Denote $B^{i,k}_t(\mathbf{x}):=\mathbf{B}^{k}(t,\Gamma_t^{-i}(\beta^N),\mathbf{x})$, $B^{k}_t(\mathbf{x}):=\mathbf{B}^{k}(t,\Gamma_t,\mathbf{x})$, and similarly $\Sigma^{i,k,q}_t(\mathbf{x}):=\mathbf{\Sigma}^{k,q}(t,\Gamma_t^{-i}(\beta^N),\mathbf{x})$, $\Sigma^{k,q}_t(\mathbf{x}):=\mathbf{\Sigma}^{k,q}(t,\Gamma_t,\mathbf{x})$. Direct calculation provides   
    \begin{align*}
    \mathrm{d}(X^{i,i}_t-X^i_t)
        =&
        (B^{i,i}_t(\Xbb^i_t)-B^{i,i}_t(\Xbb_t)) \mathrm{d}t +\sum_{q=1}^N (\Sigma^{i,i,q}_t(\Xbb^i_t)-\Sigma^{i,q}_t(\Xbb_t)) \mathrm{d} W^q_t
        \\
        &-\frac{\Sigma^{i,i,i}_t(\Xbb^i_t)-\Sigma^{i,i,i}_t(\Xbb_t)}{\Sigma^{i,i,i}_t(\Xbb_t)} (B^{i,i}_t(\Xbb_t)-B^i_t(\Xbb_t)) \mathrm{d}t,
        \\
        \mathrm{d}(X^{i,k}_t-X^k_t)
        =&
        (B^{i,k}_t(\Xbb^i_t)-B^k_t(\Xbb_t)) \mathrm{d}t +\sum_{q=1}^N (\Sigma^{i,k,q}_t(\Xbb^i_t)-\Sigma^{k,q}_t(\Xbb_t)) \mathrm{d} W^q_t
        \\
        &-\Sigma^{i,k,i}_t(\Xbb^i_t)\frac{B^{i,i}_t(\Xbb_t)-B^i_t(\Xbb_t)}{\Sigma^{i,i,i}_t(\Xbb_t)} \mathrm{d}t,~~k \neq i.
    \end{align*}
By the properties proved in Lemma \ref{lemma:coefficients}, together with the Gronwall Lemma, we show the existence of a constant $C$ (independent of $N$) such that for any $k=1,\cdots,N$ and $t \in [0,T],$
    \begin{align*}
        \E^{\P} \bigg[ \sup_{s \in [0,t]} |X^{i,k}_s-X^k_s|^2 \bigg] 
        \le
        C \bigg[ \frac{1}{N} + \int_0^t  \E^{\P} \big[\Wc_2 (\mu^N_s, \mu^{i,N}_s)^2  \big] \mathrm{d}s  \bigg].
    \end{align*}
    It is then straightforward that
    \begin{align*}
        \E^\P \big[\Wc_2 (\mu^N_t, \mu^{i,N}_t)^2 \big]
        \le 
        \frac{1}{N} \sum_{k=1}^N \E^{\P} \big[ |X^{i,k}_t-X^k_t|^2 \big] \le
        C \bigg[ \frac{1}{N} + \int_0^t  \E^{\P} \big[\Wc_2 (\mu^N_s, \mu^{i,N}_s)^2  \big] \mathrm{d}s  \bigg],
    \end{align*}
which implies by the Gronwall Lemma the existence of a constant, still denoted $C$ for simplicity, such that
    \begin{align*}
        \sup_{t \in [0,T]}\E^\P \big[\Wc_2 (\mu^N_t, \mu^{i,N}_t)^2 \big]
        \le 
        \frac{C}{N},
    \end{align*}
which implies the remaining required claims.
    \qed
\end{proof}

\vspace{3mm}

For any $\mu \in \Pc_{\Sc}(\pi),$ we introduce the set
\begin{align*}
    \Sc(\pi,\mu)
    :=
    \big\{ \P^\beta_{\pi,\mu} \circ (X)^{-1},\;\; \beta \in \Ac  \big\}.
\end{align*}
and the sequence of probability distributions $(\mathrm{P}^N)_{N \in \N^*} \subset \Pc \big( \widehat{\Om} \x \Pc(\widehat{\Om}) \big)$
\begin{align*}
    \mathrm{P}^N
    :=
    \frac{1}{N} \sum_{i=1}^N \P^i_{\Pi,\beta^N} \circ \big( X^{i,i}, \mu^{i,N} \big)^{-1}.
\end{align*}
We denote by $(\widehat{X}, \widehat{\mu})$ the canonical variable on $\Pc \big( \widehat{\Om} \x \Pc(\widehat{\Om}) \big).$ 
\begin{Proposition}\label{prop:limit_property}
    The sequence $(\mathrm{P}^N)_{N \in \N^*}$ is relatively compact in $\Wc_2$, and for any limit point $\mathrm{P}^\infty$, we have
    \begin{align*}
        \muh \in \Pc_{\Sc}(\pi)\;\;\mbox{and}\;\;\Lc^{\mathrm{P}^\infty} (\widehat{X} | \muh ) \in \Sc(\pi,\muh),
        ~~\mathrm{P}^\infty\mbox{--a.e.}~\mbox{on}~ \widehat{\Om} \x \Pc(\widehat{\Om}).
    \end{align*}
\end{Proposition}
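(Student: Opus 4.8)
The plan is to obtain relative compactness from uniform moment bounds, and then to identify any limit point through a controlled martingale--problem argument, using the density result of Section~\ref{sec:density} to pass to the limit through the discontinuities of $B$ and $\Sigma$ along $\{B=0\}$. First I would establish relative compactness in $\Wc_2$: since $(b,\sigma)$ are assumed bounded and, by Lemma~\ref{lemma:coefficients}, the coefficients $\mathbf{B}^k(t,\Pi^{-i}_t(\beta^N),\cdot)$ and $\mathbf{\Sigma}^{k,q}(t,\Pi^{-i}_t(\beta^N),\cdot)$ are bounded uniformly in $N$, the Burkholder--Davis--Gundy inequality and Gronwall's lemma, together with the $p$--integrability of $\mu_0$ ($p>2$), yield $\sup_N\frac1N\sum_{i=1}^N\E^{\P^i_{\Pi,\beta^N}}\!\big[\sup_{t\le T}|X^{i,i}_t|^p\big]<\infty$ and a uniform Kolmogorov modulus--of--continuity estimate. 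From these, tightness of the $\widehat{\Omega}$-- and $\Pc(\widehat{\Omega})$--marginals of $(\mathrm{P}^N)$ follows, hence tightness of $(\mathrm{P}^N)$ on $\widehat{\Omega}\times\Pc(\widehat{\Omega})$, and the exponent $p>2$ upgrades it to relative compactness in $\Wc_2$ (tightness plus uniform square--integrability).

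\textbf{Identification of $\muh$.} Fix a subsequence with $\mathrm{P}^{N_k}\to\mathrm{P}^\infty$. For the measure component I would write the martingale problem for $\mu^{i,N}$ under $\P^i_{\Pi,\beta^N}$: for $\varphi\in C^2_b$, the process $\langle\varphi,\mu^{i,N}_t\rangle-\langle\varphi,\mu^{i,N}_0\rangle-\int_0^t\frac1N\sum_k\big(\mathbf{B}^k\varphi'(X^{i,k}_s)+\tfrac12(\mathbf{\Sigma}\mathbf{\Sigma}^{\top})^{kk}\varphi''(X^{i,k}_s)\big)\mathrm{d}s$ is a martingale with quadratic variation $O(1/N)$. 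By Lemma~\ref{lemma:coefficients}, the verification underlying \eqref{def:nash_equilibria}, and Lemma~\ref{lemma:deviate_player} (whose $L^2(\P)$ estimates transfer through the bounded densities $Z^i_T$, giving $X^{i,k}\approx X^k$ and $\mu^{i,N}\approx\mu^N$), the summands equal $B(s,X^{i,k}_s,\mu^{i,N}_s)\varphi'+\tfrac12\Sigma(s,X^{i,k}_s,\mu^{i,N}_s)^2\varphi''$ up to $O(1/N)$ and one negligible ($k=i$) term. Passing to the limit---exactly as in the proof of Theorem~\ref{thm:weak_existence}, where the density of the limiting marginals $\muh_t$ (Section~\ref{sec:density}, via uniform ellipticity) combined with Assumption~\ref{assum:bsigma} forces $\muh_t(\{B(t,\cdot,\muh_t)=0\})=0$, so that $B(t,\cdot,\muh_t)$ and $\Sigma(t,\cdot,\muh_t)$ are $\muh_t$--a.e.\ continuous---shows that $\muh$ is, $\mathrm{P}^\infty$--a.e., the law of a weak solution of \eqref{MFG-SDE}, that is $\muh\in\Pc_\Sc(\pi)$.

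\textbf{Identification of the conditional law.} For the tagged deviated particle the analogous martingale problem for $X^{i,i}$ has diffusion $(\mathbf{\Sigma}\mathbf{\Sigma}^{\top})^{ii}=\big(\sigma(s,X^{i,i}_s,\mu^{i,N}_s)/(1+\pi^i_s)\big)^2+O(1/N)$ and drift $\mathbf{B}^i=\big(b(s,X^{i,i}_s,\mu^{i,N}_s)+\frac1N\sum_{j\neq i}\beta^{N,j}_sB(s,X^{i,j}_s,\mu^{i,N}_s)\big)/(1+\pi^i_s)+O(1/N)$, with $\pi^i_s=\mathbf{1}_{\{B(s,X^i_s,\mu^N_s)\ge0\}}$. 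Encoding the deviation into the relaxed--control measures $\Lambda^N_s:=\frac1N\sum_{j\neq i}\delta_{(X^{i,j}_s,\beta^{N,j}_s)}$ on $\R\times[0,1]$ (first marginal $\mu^{i,N}_s$), extracting a limit $\Lambda^\infty_s(\mathrm{d}y,\mathrm{d}a)=\muh_s(\mathrm{d}y)\kappa_s(y,\mathrm{d}a)$ and setting $\bar\beta_s(y):=\int a\,\kappa_s(y,\mathrm{d}a)\in[0,1]$, the limit martingale problem---again using the density of $\muh_s$ so that $\pi^i_s\to\mathbf{1}_{\{B(s,\widehat{X}_s,\muh_s)\ge0\}}$ $\mathrm{d}s\otimes\muh$--a.e.---identifies $\Lc^{\mathrm{P}^\infty}(\widehat{X}\,|\,\muh)$ as the law of a weak solution of
\[
\mathrm{d}\widehat{X}_s=\frac{b(s,\widehat{X}_s,\muh_s)+\int_\R\bar\beta_s(y)B(s,y,\muh_s)\,\muh_s(\mathrm{d}y)}{1+\mathbf{1}_{\{B(s,\widehat{X}_s,\muh_s)\ge0\}}}\,\mathrm{d}s+\frac{\sigma(s,\widehat{X}_s,\muh_s)}{1+\mathbf{1}_{\{B(s,\widehat{X}_s,\muh_s)\ge0\}}}\,\mathrm{d}W_s,
\]
which is precisely the $\P^\beta_{\pi,\muh}$--dynamics; a measurable selection realizing $\bar\beta$ as a map in $\Ac$ (or, equivalently, the closedness of $\Sc(\pi,\muh)$ in $\Wc_2$, itself a consequence of uniform ellipticity and of stability of the martingale problem under weak convergence of bounded drifts) then yields $\Lc^{\mathrm{P}^\infty}(\widehat{X}\,|\,\muh)\in\Sc(\pi,\muh)$.

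\textbf{Main obstacle.} I expect the crux to be the limit passage through the discontinuous maps $\mathbf{1}_{\{B(\cdot)\ge0\}}$, equivalently through $\Sigma$, across the critical set $\{B=0\}$: this only works because the limiting marginals admit a density (Section~\ref{sec:density}) and Assumption~\ref{assum:bsigma} makes that set $\muh_t$--negligible---without it, neither the empirical--measure nor the tagged--particle limit could be identified. A secondary difficulty is that the competing deviations $\beta^N$ are merely $\F$--predictable, so one must work with relaxed controls and recover a genuine feedback $\beta\in\Ac$ in the limit; here again the uniform ellipticity is what renders the conditional law insensitive to the relaxation.
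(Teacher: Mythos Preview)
Your overall strategy is correct and aligns with the paper's: relative compactness from uniform $p$--moment bounds, identification through a martingale--problem argument with relaxed controls, and use of the density result of Section~\ref{sec:density} together with Assumption~\ref{assum:bsigma} to pass to the limit across the discontinuity set $\{B=0\}$. The paper, however, organizes the argument differently in one substantive respect. Rather than work under the changed measures $\P^i_{\Pi,\beta^N}$ as you propose, the paper stays under the reference measure $\P$ throughout and carries the density process $Z^i$ as an additional state variable: it studies the empirical distribution $\mub^N:=\frac1N\sum_i\delta_{(X^i,Z^i,W^i,\Lambda^N,\mu^N)}$, lifts to a two--level structure on $\Pc(\Pc(\widetilde\Omega)\times\Pc(\Cc))$, and phrases the limit as a controlled martingale problem driven by a \emph{martingale measure} $\widetilde M(\mathrm{d}q,\mathrm{d}t)$ whose quadratic variation $\widetilde\Lambda$ is a $\Pc(\Pc([0,1]\times\R))$--valued process (so a measure over the empirical control--state measures $q$). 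Only after identifying the joint limit $(\Xt,\Zt,\Wt,\widetilde\Lambda,\mut)$ does it apply Girsanov at the limit level to recover the dynamics under the tilted law $\mub^\circ$. Your route---working directly under $\P^i_{\Pi,\beta^N}$ with the simpler one--level relaxation $\Lambda^N_s\in\Pc(\R\times[0,1])$---avoids tracking $Z^i$ and the final Girsanov step, which is a genuine simplification; it works here because the drift depends linearly on $q$, so the extra layer of relaxation the paper introduces is not strictly needed for the marginal dynamics of $\widehat X$. The trade--off is that the paper's approach keeps all particles under a common Brownian filtration, making the martingale--problem limit and the transfer of Lemma~\ref{lemma:deviate_player} cleaner, whereas yours must handle the $i$--dependent Brownian shifts via H\"older against the bounded densities $Z^i_T$. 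Both arguments conclude with the same closing step you flag---recovering a genuine feedback $\beta\in\Ac$ from the relaxed limit---which the paper dispatches by a brief appeal to Markovian projection, matching your ``closedness of $\Sc(\pi,\muh)$'' alternative.
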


Before starting the proof of this Proposition, let us show briefly how we use this result to prove Theorem \ref{thm:nash_equilibria}.
\paragraph*{Proof of Theorem \ref{thm:nash_equilibria}} We set
\begin{align*}
    \varepsilon^N
    :=
    \sup_{\beta} J_i\big((\Pi^N)^{-i}(\beta)\big)
    -
    J_i(\Pi^N)
    =
    \sup_{\beta} J_1\big((\Pi^N)^{-1}(\beta)\big)
    -
    J_1(\Pi^N).
\end{align*}
Let $(\beta^N)_{N \in \N^*}$ be a sequence satisfying $ J_1((\Pi^N)^{-1}(\beta^N))-J_1(\Pi^N) \ge \varepsilon^N -2^{-N},$ for all $N \ge 1.$ By construction, $\varepsilon^N \ge 0$ for all $N \ge 1,$ and  
\begin{align*}
    \frac{1}{N} \sum_{i=1}^N J_i\big((\Pi^N)^{-i}(\beta^N)\big)
    -
    J_i(\Pi^N)
    &=
    \frac{1}{N} \sum_{i=1}^N \big( \E^{\P^i_{\Pi,\beta^N}}[U(X^{i,i}_T)]
    -
    \E^{\P}[U(X^{i}_T)] \big)
    \\
    &=
    \frac{1}{N} \sum_{i=1}^N \E^{\P^i_{\Pi,\beta^N}}[U(X^{i,i}_T)]
    -
    \E^{\P}[\langle U, \mu^N_T \rangle].
\end{align*}
By combining Lemma \ref{lemma:deviate_player}, this provides 
\begin{align*}
    &\limsup_{N \to \infty} \frac{1}{N} \sum_{i=1}^N
    \frac{1}{N} \sum_{i=1}^N J_i\big((\Pi^N)^{-i}(\beta^N)\big)
    -
    J_i(\Pi^N)
    \\
    &=
    \limsup_{N \to \infty} \frac{1}{N} \sum_{i=1}^N \big(\E^{\P^i_{\Pi,\beta^N}}[U(X^{i,i}_T)]
    -
    \E^{\P^i_{\Pi,\beta^N}}[\langle U, \mu^{i,N}_T \rangle] \big)
    \\
    &=
    \limsup_{N \to \infty} \big(\E^{\Pr^N}[U(\Xh_T)-\langle U, \muh_T \rangle] \big)
    \\
    &=
    \E^{\mathrm{P}^\infty}\bigg[ U(\widehat{X}_T) - \int_{\widehat{\Om}} U(x_T) \muh(\mathrm{d}x) \bigg]    =
    \E^{\mathrm{P}^\infty}\bigg[ \E^{\mathrm{P}^\infty}\big[ U(\widehat{X}_T) \big| \muh \big] - \int_{\widehat{\Om}} U(x_T) \muh(\mathrm{d}x) \bigg],
\end{align*}
where $\Pr^\infty$ is some limit of sequence $(\Pr^N)_{N \ge 1}$ in $\Wc_2$ sense, see Proposition \ref{prop:limit_property}. Here, we also used the fact that $U$ is continuous with linear growth. 
By Corollary \ref{cor:MFG_sol}, any element of $\Pc_{\Sc}(\pi)$ is a solution of the mean field game of mutual holding associated with control $\pi.$ Moreover, by Proposition \ref{prop:limit_property}, we have that for $\mathrm{P}^\infty$ a.e. $\om \in \widehat{\Om} \x \Pc(\widehat{\Om}),$ $\muh \in \Pc_{\Sc}(\pi)\;\;\mbox{and}\;\;\Lc^{\mathrm{P}^\infty} (\widehat{X} | \muh ) \in \Sc(\pi,\muh)$. Therefore
\begin{align*}
    \E^{\mathrm{P}^\infty}\big[ U(\widehat{X}_T) \big| \muh \big] - \int_{\widehat{\Om}} U(x_T) \muh(\mathrm{d}x) \le 0,\;\mathrm{P}^\infty\mbox{--a.s}.
\end{align*}
and we finally deduce from the definition of $\beta^N$ together with the non--negativity of $\varepsilon^N$ that 
\begin{eqnarray*}
0 \;\le\; \liminf_{N \to \infty} \varepsilon^N\;\le\; \limsup_{N \to \infty} \varepsilon^N  \;\le\;
    \limsup_{N \to \infty} \frac{1}{N} \sum_{i=1}^N J_i\big((\Pi^N)^{-i}(\beta^N)\big)-J_i(\Pi^N) &\le& 0,
\end{eqnarray*}
which induces the required result.
\qed

\paragraph*{Proof of Proposition \ref{prop:limit_property}}
    We recall that $\frac{\mathrm{d}\P^i_{\Pi,\beta^N}}{\mathrm{d}\P}= Z^i_T.$ Therefore, in order to obtain the asymptotics of the distributions $\Lc^{\P^i_{\Pi,\overline{\beta}^N}}(X^{i,i},\mu^{i,N})$, we focus on the distributions $\Lc^{\P}(Z^i,X^{i,i},\mu^{i,N}).$ Using Lemma \ref{lemma:deviate_player}, one can verify that
    \begin{align} \label{eq:same_limit}
        \lim_{N \to \infty} \Wc_2 \Big( \frac{1}{N} \sum_{i=1}^N \Lc^{\P}(Z^i,X^{i,i},\mu^{i,N}), \frac{1}{N} \sum_{i=1}^N \Lc^{\P}(Z^i,X^{i},\mu^{N}) \Big) =0.
    \end{align}
Consequently, we may instead focus on the sequence $\big(\frac{1}{N} \sum_{i=1}^N \Lc^{\P}(Z^i,X^{i},\mu^{N}) \big)_{N \ge 1}.$ Recall the dynamics of $(Z^i,X^i)$: 
    \begin{eqnarray}
        \frac{\mathrm{d} Z^i_t}{Z^i_t }
        &=&
        \frac{\mathbf{B}^{i}(t,\Pi^{-i}_t(\beta^N),\mathbf{X}_t)-\mathbf{B}^{i}(t,\Pi_t,\mathbf{X}_t)}
                        {\mathbf{\Sigma}^{i,i}(t,\Pi^{-i}_t(\beta^N),\mathbf{X}_t)} \mathrm{d}W^i_t
        \label{Xi}\\
        \mathrm{d}X^i_t
        &=&
        B(t,X^i_t,\mu^N_t) \mathrm{d}t
        +
        \sum_{j=1}^N\Sigma^{i,j}(t,X^i_t,\mu^N_t) \mathrm{d}W^j_t.
        \label{Zi}
    \end{eqnarray}
{\it Step 1: Coefficients asymptotics.} By \eqref{eq:def_coef} and \eqref{def:nash_equilibria}, notice that $\mathbf{B}^{i}(t,\Pi_t,\mathbf{X}_t)= B(t,X^i_t,\mu^N_t)$. Then denoting 
 \begin{eqnarray}\label{qN}
 q^N_t(\mathrm{d}u,\mathrm{d}x)
        &:=&
        \frac{1}{N} \sum_{i=1}^N \delta_{(\beta^{N,j}_t,X^j_t)}(\mathrm{d}u,\mathrm{d}x),
\end{eqnarray}
it follows from \eqref{Bii} that
        \begin{align*}
        &\Big( 1 + \frac{N-1}{N} \pi^i_t \Big)\mathbf{B}^{i}(t,\Pi^{-i}_t(\beta^N),\mathbf{X}_t)
        \\
        &=
        \frac{1}{N} \sum_{j\neq i} \beta^{N,j}_t \mathbf{B}^j(t,\Pi^{-i}_t(\beta^N),\mathbf{X}_t) + b(t,X^i_t,\mu^N_t) 
        \\
        &=
        \frac{1}{N} \sum_{j\neq i} \beta^{N,j}_t B(t,X^j_t,\mu^N_t) + b(t,X^i_t,\mu^N_t)
        +\frac{1}{N} \sum_{j\neq i} \beta^{N,j}_t (\mathbf{B}^j(t,\Pi^{-i}_t(\beta^N),\mathbf{X}_t)-\mathbf{B}^j(t,\Pi_t,\mathbf{X}_t))
        \\
        &=
        \int u B(t,x,\mu^N_t) q^N_t(\mathrm{d}u,\mathrm{d}x) + b(t,X^i_t,\mu^N_t)
        +\frac{1}{N} \sum_{j\neq i} \beta^{N,j}_t (\mathbf{B}^j(t,\Pi^{-i}_t(\beta^N),\mathbf{X}_t)-\mathbf{B}^j(t,\Pi_t,\mathbf{X}_t)),
    \end{align*}
Then, it follows from Lemma \ref{lemma:coefficients} that
    \begin{eqnarray} \label{eq:delta}
        \delta_N
        \;:=\;
        \Big|\mathbf{B}^{i}(t,\Pi^{-i}_t(\beta),\mathbf{X}_t)
        -\frac{\int u B(t,x,\mu^N_t) q^N_t(\mathrm{d}u,\mathrm{d}x) + b(t,X^i_t,\mu^N_t)}
                {1 + \pi(t,X^i_t,\mu^N_t)}
        \Big|
        &\stackrel{N \to \infty}{\longrightarrow}& 0.~~~~~
    \end{eqnarray}
Similarly, it follows from \eqref{Sigma(i)}-\eqref{Sigma(ii)}-\eqref{Sigma(iii)} together with Lemma \ref{lemma:coefficients} that
    \begin{equation} \label{eq:r}
        r_N
        :=
        \sup_{1 \le i \le N}\bigg|\mathbf{\Sigma}^{i,i}(t,\Pi^{-i}_t(\beta^N),\mathbf{X}_t)
        -
        \frac{\sigma(t,X^i_t,\mu^N) }{1+\pi(t,X^i_t,\mu^N_t)} \bigg| 
        + \sup_{q \neq j}
        \Big|\Sigma^{q,j}(t,X^i_t,\mu^N_t) \Big|
        \stackrel{N \to \infty}{\longrightarrow} 0.
    \end{equation}
 \noindent {\it Step 2: Formulation of the martingale problem.} On a filtered probability space $(\Om,\F,\P),$ for a continuous function $m: [0,T] \longrightarrow \Pc(\R),$ let $\Mc[\Om,\F,\P](m)$ be the set of $\Pc(\Pc([0,1] \x \R))$--valued $\F$--predictable processes $\Lambda:=(\Lambda_t)_{t \in [0,T]}$ satisfying 
    $$
        \Lambda_t \big( \{ q \in \Pc([0,1] \x \R):\;\;q([0,1],\mathrm{d}x)=m(\mathrm{d}x) \} \big)=1\;\;\mathrm{d}t \otimes \mathrm{d}\P\mbox{--a.e.}
    $$
    The set $\Mc[\Om,\F,\P](m)$ has to be seen as a set of controls. Now, let $(X,Z,W,M, \Lambda)$ be variables such that: 
    \\
    $\bullet$ $\Lambda \in \Mc[\Om,\F,\P](\mu),$ 
    \\
    $\bullet$ $M(\mathrm{d}q,\mathrm{d}t)$ is a martingale measure with quadratic variation $\Lambda$, so that $W_t\!=\!M\big(\Pc(\Pc([0,1] \!\x\! \R)) \x [0,t]\big)$, $t\in[0,T]$, defines a Brownian motion  (see \citeauthor*{el1990martingale} \cite{el1990martingale} for an overview of martingale measure), 
    \\
    $\bullet$ and $(X,Z,W)$ is a weak solution of 
    \begin{eqnarray*}
        \mathrm{d} X_t
        &\!\!\!=&\!\!\!\!
        B(t,X_t,\mu_t) \mathrm{d}t
        + \frac{\sigma(t,X_t,\mu_t)}{1+\pi(t,X_t,\mu_t)} \mathrm{d}W_t,
        ~\mbox{with}~\mu_t=\Lc(X_t),~t\in[0,T],
        \\
        \frac{\mathrm{d}Z_t}{Z_t}
        &\!\!\!=&\!\!\!\!
         \int \frac{1+\pi(t,X_t,\mu_t)}{\sigma(t,X_t,\mu_t)}\bigg[\frac{\int u B(t,x,\mu_t) q(\mathrm{d}u,\mathrm{d}x) + b(t,X_t,\mu_t)}{1 + \pi(t,X_t,\mu_t)}  - B(t,X_t,\mu_t)\bigg] M(\mathrm{d}q,\mathrm{d}t).
    \end{eqnarray*}
In order to formulate the martingale problem associated with processes $(X,Z,W,\Lambda)$, we introduce the generator $\Lc=\Lc_s^{\mu_s,q}$ of $(X,Z,W)$. Then, $(X,Z,W, \Lambda)$ is a weak solution of the last SDE if for any $f \in C_b^2(\R \x \R \x \R),$ the process
    \begin{align*}
        M^f_t(X,Z,W,\Lambda,\mu):= f(X_t,Z_t,W_t)-\int_0^t \int\Lc_s^{\mu_s,q} f(X_s,Z_s,W_s) \Lambda_s(\mathrm{d}q) \mathrm{d}s,~t \in [0,T],
    \end{align*}
is a $(\F,\P)$--martingale.

 \medskip
 \noindent {\it Step 3: Identification of the limit.} We now show that all the possible limits of our sequence are related to a distribution of type $\P\circ(X, Z, W, \Lambda)^{-1}$. Let $E:=\Pc([0,1] \x \R)$, and $\M(E)$ the space of all Borel measures $q( \mathrm{d}t,  \mathrm{d}e)$ on $[0,T] \x E$, whose marginal distribution on $[0,T]$ is the Lebesgue measure $ \mathrm{d}t$, i.e. $q( \mathrm{d}t, \mathrm{d}e)=q_t(\mathrm{d}e) \mathrm{d}t$ for some family $(q_t)_{t \in [0,T]}$ of Borel probability measures on $E$.
	We denote by $\Lambda$ the canonical element of $\M(E)$ and set
	\begin{align*}
	\Lambda_{t \wedge \cdot}(\mathrm{d}s, \mathrm{d}e) :=  \Lambda(\mathrm{d}s, \mathrm{d}e) \big|_{ [0,t] \x E} + \delta_{e_0}(\mathrm{d}e) \mathrm{d}s \big|_{(t,T] \x E},\; \text{for some fixed $e_0 \in E$.}
	\end{align*}
Let $\widetilde{\Om}:=C([0,T];\R^3) \x \M(E) \x \Pc(\Cc)$,  and denote the corresponding  canonical process and canonical filtration by $(\widetilde{X},\widetilde{Z},\widetilde{W},\widetilde{\Lambda},\widetilde{\mu})$ and $\widetilde{\F}:=(\widetilde{\Fc}_t)_{t \in [0,T]}$, $\Fct_t:=\sigma \{\Xt_{t \wedge \cdot},\Zt_{t \wedge \cdot}, \Wt_{t \wedge \cdot}, \widetilde{\Lambda}_{t \wedge \cdot}, \mut \circ (\Xh_{t \wedge \cdot})^{-1} \}$. We shall also denote by $(\mub,\mu)$ the canonical variable on $\Pc(\widetilde{\Om}) \x \Pc(\Cc).$ 

Recall the random measures $q^N$ defined in \eqref{qN}, and let $(\widehat{\mathrm{P}}^N)_{N \in \N^*} \subset \Pc\big( \Pc(\widetilde{\Om}) \x \Pc(\Cc) \big)$ be the sequence defined by
    \begin{align*}
        \widehat{\mathrm{P}}^N
        :=
        \frac{1}{N} \sum_{i=1}^N \Lc^{\P}(\mub^N,\mu^{N})\;\;\mbox{where}\;\;\mub^N
        :=
        \frac{1}{N} \sum_{i=1}^N \delta_{(X^i,Z^i,W^i,\Lambda^N,\mu^N)},
        ~\mbox{and}~
        \Lambda^N:=\delta_{q^N_t}(\mathrm{d}q)\mathrm{d}t.
    \end{align*}
Under Assumption \ref{assum:bsigma}, together with with the boundedness of $(b,\sigma)$, we see that $(B^i,\Sigma^{k,q})$ are also bounded uniformly in $N$, and we easy verify that $(\widehat{\mathrm{P}}^N)_{N \in \N^*}$ is relatively compact in $\Wc_2.$ Let $\widehat{\mathrm{P}}^\infty$ be the limit of a sub--sequence. For simplicity, we use the same notation for the sequence and its sub--sequence. We now show that for $\widehat{\mathrm{P}}^\infty$ a.e. $\om \in \Pc(\widetilde{\Om}) \x \Pc(\Cc)$: 
    \begin{align} \label{eq:martingale_property}
        \big(M^f_t(\widetilde{X},\widetilde{Z},\widetilde{W},\widetilde{\Lambda},\widetilde{\mu}) \big)_{t \in [0,T]}\;\;\mbox{is a}\;(\widetilde{\F},\mub(\om))\mbox{--martingale for all }f\in C_b^2(\R^3).
    \end{align}
To prove this result, we introduce $M^{f,i}$ the martingale associated to $(X^i,Z^i,W^i)$ whose dynamics are recalled in \eqref{Xi}-\eqref{Zi}:
    \begin{align*}
        M^{f,i}_t
        :=&
        f(X^i_t,Z^i_t,W^i_t)
        -
        \bigg[\int_0^t \nabla_x f(X^i_s,Z^i_s,W^i_s) B(s,X^i_s,\mu^N_s) \mathrm{d}s + \frac{1}{2} \nabla_x^2 f(X^i_s,Z^i_s,W^i_s) \mathrm{d} \langle X^i \rangle_s
        \\
        &+
        \frac{1}{2} \nabla_z^2 f(X^i_s,Z^i_s,W^i_s) \mathrm{d} \langle Z^i \rangle_s
        +
        \frac{1}{2} \nabla_w^2 f(X^i_s,Z^i_s,W^i_s) \mathrm{d}s
        +
        \nabla_{xz}^2 f(X^i_s,Z^i_s,W^i_s) \mathrm{d} \langle X^i,Z^i \rangle_s
        \\
        &+\nabla_{xw}^2 f(X^i_s,Z^i_s,W^i_s) \mathrm{d} \langle X^i,W^i \rangle_s
        +
        \nabla_{wz}^2 f(X^i_s,Z^i_s,W^i_s) \mathrm{d} \langle W^i,Z^i \rangle_s \bigg].
    \end{align*}
In order to handle the singularity apprearing in the diffusion of $X^i$ in \eqref{Xi}, we introduce the subsets of $\Omt$ 
    $$
    \begin{array}{cc}
        C_1(t)
        :=
        \Big\{ \omt \in \Omt:\;\mbox{the map}\;\omt' \longmapsto \mathbf{1}_{\{B(t,\Xt_t(\omt'),\mut_t(\omt')) \ge 0\}}\;\mbox{is continuous at the point }\omt \Big\},
        \\
    C_2(t)
        :=
        \Big\{ \omt \in \Omt:\;\mbox{the map}\;\omt' \longmapsto M^f_t(\Xt,\Zt,\Wt,\widetilde{\Lambda},\mut)(\omt')\;\mbox{is continuous at the point }\omt
        \Big\}.
    \end{array}
    $$
Denoting $\mub_t:=\mub \circ (\Xt_t)^{-1}$, $t \in [0,T],$ it follows from Proposition \ref{prop:lebesgue} that for $\widehat{\Pr}^\infty$--a.e. $\om,$ $\mub_t(\om)(\mathrm{d}x)\mathrm{d}t$ has a density w.r.t. the Lebesgue measure on $\R \x [0,T].$ As $B(t,x,m) \ge 0$ is equivalent to $b(t,x,m)+c(t,m) \ge 0$, we obtain by Assumption \ref{assum:bsigma} that
    \begin{align} \label{cond:portemanteau}
    \mub ( C_1(t) )=1,\;\mathrm{d}\widehat{\Pr}^\infty \otimes \mathrm{d}t-\mbox{a.e. and therefore}
    ~~\mub ( C_2(t) )=1,\;\mathrm{d}\widehat{\Pr}^\infty \otimes \mathrm{d}t-\mbox{a.e.} 
    \end{align}
    Let $\Phi: \Omt \longrightarrow \R$ be a bounded continuous function.  Recall from \eqref{eq:delta}-\eqref{eq:r} that $ (\delta_N,r_N)\longrightarrow (0,0)$ and, from Lemma \ref{lemma:deviate_player}, $\sup_{1 \le q \neq e \le N}\big| \Sigma^{e,q}(t,\Pi_t,\mathbf{x}) \Big| \le \frac{C}{N}$. Then, in view of \eqref{cond:portemanteau}, it follows from the Portemanteau Theorem, together with the proof of Proposition \ref{prop:lebesgue}, that
    \begin{align*}
        &\E^{\widehat{\Pr}^\infty} \Big[ \E^{\mub} \Big[\Big( M^f_t(\Xt,\Zt,\Wt,\widetilde{\Lambda},\mut) - M^f_s(\Xt,\Zt,\Wt,\widetilde{\Lambda},\mut) \Big) \Phi \big( \Xt_{s \wedge \cdot},\Zt_{s \wedge \cdot},\Wt_{s \wedge \cdot},\widetilde{\Lambda}_{s \wedge \cdot},\mut_{s \wedge \cdot} \big) \Big]^2 \Big]
        \\
        &=
        \lim_{N \to \infty} \E^{\P} \Big[ \Big| \frac{1}{N} \sum_{i=1}^N \Big( M^{f,i}_t - M^{f,i}_s \Big) \Phi \big( X^i_{s \wedge \cdot},Z^i_{s \wedge \cdot},W^i_{s \wedge \cdot},\Lambda^N_{s \wedge \cdot},\mu^N_{s \wedge \cdot} \big) \Big|^2 \Big].
    \end{align*}
    Notice that
    \begin{align*}
        M^{f,i}_\cdot= f(X^i_0,Z^i_0,W^i_0) &+ 
        \int_0^\cdot \nabla_x f(X^i_t,Z^i_t,W^i_t) \sum_{j=1}^N \Sigma^{i,j}(t,X^i_t,\mu^N_t) \mathrm{d}W^j_t 
        \\
        &+ \int_0^\cdot \nabla_z f(X^i_t,Z^i_t,W^i_t) \mathrm{d}Z^i_t 
        + \nabla_w f(X^i_t,Z^i_t,W^i_t) \mathrm{d}W^i_t.
    \end{align*}
    As $(W^i)_{1 \le i \le N}$ are independent Brownian motions and $\sup_{1 \le q \neq e \le N}\big| \Sigma^{e,q}(t,\Pi_t,\mathbf{x}) \Big| \le \frac{C}{N},$ we deduce by a similar argument as in \citeauthor*{lacker2017limit} \cite[Proof of Proposition 5.1.]{lacker2017limit} and \citeauthor*{djete2019general} \cite[Proof of Proposition 4.17]{djete2019general} that
    \begin{align*}
        \lim_{N \to \infty} \E^{\P} \Big[ \Big| \frac{1}{N} \sum_{i=1}^N \Big( M^{f,i}_t - M^{f,i}_s \Big) \Phi \big( X^i_{s \wedge \cdot},Z^i_{s \wedge \cdot},W^i_{s \wedge \cdot},\Lambda^N_{s \wedge \cdot},\mu^N_{s \wedge \cdot} \big) \Big|^2 \Big]=0.
    \end{align*}
    By taking a countable sequence of $f \in C^2_b(\R^3),$ we deduce that \eqref{eq:martingale_property} is verified. By similar techniques as \citeauthor*{MFD-2020} \cite{MFD-2020}, we find that ,
    \begin{align} \label{eq:mu}
        \mub(\om)\Big[\mut=\mu(\om)=\Lc^{\mub(\om)}(\Xt) \Big]=1,
        ~\mbox{for}~ \widehat{\Pr}^\infty-\mbox{a.e.}~\om \in \Pc(\Omt) \x \Pc(\Cc),
     \end{align}
and 
    $$
        \widetilde{\Lambda}_t \big( \big\{ q \in \Pc([0,1] \x \R):q([0,1],\mathrm{d}x)=\mut_t(\mathrm{d}x) \big\} \big)=1,
        \mathrm{d}t \otimes \mathrm{d}\mub(\om)-\mbox{a.e.}
    $$ 
which means that $\widetilde{\Lambda} \in \Mc[\Omt,\Ft,\mub(\om)]((\mut_t)_{t \in [0,T]})$, $\mathrm{d}t \otimes \mathrm{d}\mub(\om)-$a.e. 
Consequently, by Step 1, for $\om \in \Pc(\Omt) \x \Pc(\Cc),$  on $\big(\Omt \x [0,1],(\Fct_t \otimes \Bc([0,1]))_{t \in [0,T]}, \mub(\om) \otimes \lambda \big)$ an extension of $\big(\Omt,(\Fct_t)_{t \in [0,T]}, \mub(\om) \big),$ there exists a martingale measure $\widetilde{M}(\mathrm{d}q,\mathrm{d}t)$ with quadratic variation $\widetilde{\Lambda}$ such that for $\widehat{\Pr}^\infty$--a.e. $\om,$ the process $(\Xt,\Zt,\Wt,\widetilde{\Lambda})$ satisfies $\mut_t=\Lc^{\mub(\om)}(\Xt_t),$ $\widetilde{\Lambda} \in \Mc[\Omt,\Ft,\mub(\om)]((\mut_t)_{t \in [0,T]}),$
    \begin{eqnarray*}
        \mathrm{d} \Xt_t
        &\!\!=&\!\!
        B(t,\Xt_t,\mut_t) \mathrm{d}t
        + \frac{\sigma(t,\Xt_t,\mut_t)}{1+\pi(t,\Xt_t,\mut_t)} \mathrm{d}\Wt_t,\;\;\Wt_t=\widetilde{M}(E \x [0,t]),
        \\
        \frac{\mathrm{d}\Zt_t}{\Zt_t}
        &\!\!=&\!\!
         \int \frac{1+\pi(t,\Xt_t,\mut_t)}{\sigma(t,\Xt_t,\mut_t)}\bigg[\frac{\int u B(t,x,\mut_t) q(\mathrm{d}u,\mathrm{d}x) + b(t,\Xt_t,\mut_t)}{1 + \pi(t,\Xt_t,\mut_t)}  - B(t,\Xt_t,\mut_t)\bigg] \widetilde{M}(\mathrm{d}q,\mathrm{d}t).
    \end{eqnarray*}
In view of the last dynamics of $\Xt$, this shows that $\mu(\om) \in \Pc_{\Sc}(\pi)$, for $\widehat{\Pr}^\infty$--a.e. $\om$. 
 
 \medskip
 \noindent {\it Step 4:} For all $\om \in \Pc(\Omt) \x \Pc(\Cc),$ let us define the probability
    \begin{align*}
        \frac{\mathrm{d}\mub^\circ(\om)}{\mathrm{d}\mub(\om)}:=\Zt_T
    \end{align*}
    and the process $(\widetilde{N}_t)_{t \in [0,T]}$ by
    $$
        \widetilde{N}_\cdot
        :=
        \int_0^\cdot \int \frac{1+\pi(t,\Xt_t,\mut_t)}{\sigma(t,\Xt_t,\mut_t)}\bigg[\frac{\int u B(t,x,\mut_t) q(\mathrm{d}u,\mathrm{d}x) + b(t,\Xt_t,\mut_t)}{1 + \pi(t,\Xt_t,\mut_t)}  - B(t,\Xt_t,\mut_t)\bigg] \widetilde{M}(\mathrm{d}q,\mathrm{d}t)
    $$
    By Girsanov Theorem, $\Wt^\circ_\cdot:=\Wt_\cdot- \langle \Wt, \widetilde{N} \rangle_\cdot$ is a $\mub^\circ(\om)$--Brownian motion. It is straightforward that
    \begin{align*}
        \mathrm{d} \Xt_t
        =
        \frac{\int u B(t,x,\mu_t(\om)) q(x)(\mathrm{d}u)\mu_t(\om)(\mathrm{d}x) \widetilde{\Lambda}_t(\mathrm{d}q) + b(t,\Xt_t,\mu_t(\om))}{1 + \pi(t,\Xt_t,\mu_t(\om))} \mathrm{d}t
        +
        \frac{\sigma(t,\Xt_t,\mu_t(\om))}{1+\pi(t,\Xt_t,\mu_t(\om))} \mathrm{d}\Wt^\circ_t,
    \end{align*}
    where $\R \ni x \longmapsto q(x)(\mathrm{d}u) \in \Pc([0,1])$ is a Borel map s.t. $q(\mathrm{d}u,\mathrm{d}x)=q(x)(\mathrm{d}u)q([0,1],\mathrm{d}x).$
    Using some Markovian projection techniques, we can verify that $\Lc^{\mub^\circ(\om)}(\Xt) \in \Sc(\pi,\mu(\om)).$ By combining all the results, for any bounded continuous function $\Phi,$ we get that
    \begin{align*}
        \Lim_{N \to \infty}\frac{1}{N} \sum_{i=1}^N \E^{\P^i_{\Pi,\beta^N}} \big[ \Phi (X^{i,i},\mu^{i,N}) \big]
        &=
        \Lim_{N \to \infty}\frac{1}{N} \sum_{i=1}^N \E^{\P} \big[ Z^i_T \Phi (X^{i,i},\mu^{i,N}) \big]
        \\
        &= \E^{\widehat{\Pr}^{\infty}} \big[ \E^{\mub} \big[ \Zt_T \Phi (\Xt,\mu) \big] \big]
        = \E^{\widehat{\Pr}^{\infty}} \big[ \E^{\mub^\circ} \big[ \Phi (\Xt,\mu) \big] \big].    \end{align*}
    Therefore,
    \begin{align*}
         \Lim_{N \to \infty}\frac{1}{N} \sum_{i=1}^N \P^i_{\Pi,\beta^N} \circ \big( X^{i,i}, \mu^{i,N} \big)^{-1}
         =
        \widehat{\Qr}^\infty
        :=\int_{\Pc(\Omt) \x \Pc(\Cc)}\mub^\circ(\om) \big( \Xt, \mut \big)^{-1}  \widehat{\Pr}^\infty(\mathrm{d}\om).
    \end{align*}
    Notice that $\widehat{\Qr}^\infty \in \Pc(\Omh \x \Pc(\Omh)).$
    Let $Q^\infty \in \Pc(\Omt )$ be defined by
    \begin{align*}
        Q^\infty
        :=
        \int_{\Pc(\Omt) \x \Pc(\Cc)}\mub^\circ(\om) \big( \Xt,\Wt^\circ,\Zt,\Lambda, \mut \big)^{-1}  \widehat{\Pr}^\infty(\mathrm{d}\om).
    \end{align*}
    As $\widehat{\Pr}^\infty$--a.s. $\om,$ $\mut=\mu(\om),$ $\mub(\om)$ a.e.,  $\Wt^\circ$ and $\mut$ are $Q^\infty$--independent. Therefore, the conditional dynamic of $\Xt$ given $\mut$ under $Q^\infty$ is 
    \begin{align*}
        \mathrm{d} \Xt_t
        =
        \frac{\int u B(t,x,\mut_t(\om)) q(x)(\mathrm{d}u)\mut_t(\om)(\mathrm{d}x) \widetilde{\Lambda}_t(\mathrm{d}q) + b(t,\Xt_t,\mut_t(\om))}{1 + \pi(t,X_t,\mut_t(\om))} \mathrm{d}t
        +
        \frac{\sigma(t,\Xt_t,\mut_t(\om))}{1+\pi(t,X_t,\mut_t(\om))} \mathrm{d}\Wt^\circ_t.
    \end{align*}
 This shows that $\Lc^{Q^\infty}(\Xt | \mut) \in \Sc(\pi,\mut)$ $Q^\infty$--a.e. and, since $\widehat{\Qr}^\infty=\Lc^{Q^\infty}(\Xt,\mut),$ it follows that $\Lc^{\widehat{\Qr}^\infty}(\Xh | \muh) \in \Sc(\pi,\muh)$ $\widehat{\Qr}^\infty$--a.s.

    \qed

}
\section{Existence for the mean field SDE of mutual holding}
\label{sec:SDE}

This section provides a wellposedness result for a class of McKean-Vlasov 
SDEs taking values in $\R^d$, for some fixed dimension $d \in \N^*$, with
singular diffusion coefficient. We consider the general form
\begin{equation} \label{eq:weak_McK-irregular}
	    X_t
	    =
	    X_0
	    +
	    \int_0^t \Lambda(\Theta_s) \mathrm{d}s
	    +
	    \int_0^t \Gamma(\Theta_s) \Phi \big(\gamma(\Theta_s) \big) \mathrm{d}W_s,
	    t\in[0,T],~\mbox{with}~
	    \Theta_s=(s,X_s,\mu_s),
\end{equation}
for some given $\Fc_0-$measurable r.v. $X_0$ with values in $\R^d$, where 
$\mu_s=\Lc(X_s)$ denotes the law of $X_s,$ and $W$ is a $\R^d$--valued $\F$--Brownian 
motion on a filtered probability space $(\Om,\F,\P)$. 

Our main concern is about the irregularity of the diffusion coefficient which is illustrated here by the general function
  $$\Phi:\R^d\longrightarrow \S^d,~~\mbox{Borel measurable bounded},$$
where $\S_d$ denote the collection of all $d\x d$-–dimensional matrices
with real entries. 

Our restrictions on the coefficients $\Lambda,\Gamma,\gamma$ allow to cover our application to the mutual holding problem.

\begin{Assumption}\label{assum:SDE}
The functions $(\Lambda,\Gamma,\gamma): (t,x,m)\in [0,T] \x \R^d \x \Pc(\R^d) 
\longrightarrow \R^d \x \S_d \x  \R^d$ are Borel maps with quadratic growth in $(x,m)$ uniformly in $t$ satisfying in addition: 
\begin{enumerate}

\item[{\rm (i)}] The diffusion coefficient is uniformly elliptic, i.e. $\inf_{\theta,u\neq 0}\frac{u^\intercal(\Gamma\Phi\circ\gamma)(\theta) u}{|u|^2}>0;$

\item[{\rm (ii)}] $\Lambda,\Gamma,\gamma$ are uniformly Lipschitz in $m$, i.e. there exists $C >0,$ such that 
	\begin{eqnarray*}
	    \sup_{[0,T] \x \R^d} \big| (\Lambda,\Gamma,\gamma)(.,m)
			            -(\Lambda,\Gamma,\gamma)(.,m')
			   \big |
	\;\le\;
	C\;\Wc_2 (m,m'),
	&\mbox{for all}&
	m,m'\in\Pc(\R^d);
	\end{eqnarray*}
\item[{\rm (iii)}] {\color{black} For all $m \in \Pc(\R),$ and Lebesgue-a.e. $t \in [0,T],$
\begin{align*}
    &\Jc(t,m)
    :=
    \big\{ 
        x \in \R: \Phi \circ \gamma (t,\cdot)\;\mbox{is continuous at the point}\;(x,m)
    \big\}
\end{align*}
has full Lebesgue measure, i.e. $\Jc(s,m)^c$ is Lebesgue--negligible.
}
\end{enumerate}
\end{Assumption}

\begin{Definition}\label{def:SDE}
A weak solution of the SDE \eqref{eq:weak_McK-irregular} is a six-tuple $(\widetilde{\Om},\widetilde{\Fc},\widetilde{\F},\widetilde{\P},\widetilde{W},\widetilde{X})$ where $(\widetilde{\Om},\widetilde{\Fc},\widetilde{\F},\widetilde{\P})$ is a filtered probability space, supporting an $\widetilde{\F}$--Brownian motion $\widetilde{W}$,  and $\widetilde{X}$ is a $\widetilde{\F}$--adapted continuous process satisfying the dynamics \eqref{eq:weak_McK-irregular}.
\end{Definition}
	
\begin{Theorem} \label{thm:weak_existence}

.
Under Assumption \ref{assum:SDE}, for all $\mu_0\in\Pc_{p}(\R^d)$ with $p>2$, the SDE \eqref{eq:weak_McK-irregular} has at least one weak solution $(\widetilde{\Om},\widetilde{\Fc},\widetilde{\F},\widetilde{\P},\widetilde{W},\widetilde{X})$ satisfying $\E^{\widetilde{\P}} \big[ \sup_{t \in [0,T]} |\widetilde{X}_t|^p \big] < \infty.$
\end{Theorem}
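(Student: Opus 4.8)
\medskip

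\noindent\textbf{Overall strategy.} The plan is to construct a weak solution via a compactness/mollification argument, combined with a martingale-problem reformulation that allows passing to the limit even through the discontinuity of $\Phi$. First I would mollify the irregular data: for each $n\ge 1$ pick a bounded Borel function $\Phi^n:\R^d\to\S_d$ that is continuous (e.g. Lipschitz), uniformly elliptic with ellipticity constants independent of $n$, uniformly bounded by $\|\Phi\|_\infty$, and such that $\Phi^n\to\Phi$ pointwise on the full-measure set where $\Phi$ is continuous along $\gamma$ (more precisely, such that $\Phi^n\circ\gamma(t,\cdot)\to\Phi\circ\gamma(t,\cdot)$ at every point of $\Jc(t,m)\times\{m\}$). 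With $\Phi^n$ in place of $\Phi$, the coefficients $\big(\Lambda,\Gamma\,\Phi^n\!\circ\gamma\big)$ are continuous in $(x,m)$, of quadratic growth uniform in $n$, Lipschitz in $m$, and uniformly elliptic; hence by standard McKean--Vlasov existence theory (e.g. Sznitman-type compactness, or the result already invoked from \cite{KrylovControlledDiffusion} applied to the associated particle system) there is a weak solution $(\Om^n,\Fc^n,\F^n,\P^n,W^n,X^n)$ with $\mu^n_t=\Lc^{\P^n}(X^n_t)$, and the $p$-th moment bound $\E^{\P^n}\big[\sup_{t\le T}|X^n_t|^p\big]\le C$ holds uniformly in $n$ by Gronwall and the quadratic growth, since $\mu_0\in\Pc_p(\R^d)$ with $p>2$.

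\medskip

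\noindent\textbf{Compactness and the limit candidate.} Next I would establish tightness of the laws $\big(\Lc^{\P^n}(X^n,W^n)\big)_{n\ge 1}$ on $C([0,T];\R^d)^2$: the $W^n$ are standard Brownian motions, and Aldous/Kolmogorov-type estimates for $X^n$ follow from the uniform bounds on drift and diffusion. Passing to a subsequence, we get a limit law under which the canonical process $(X,W)$ satisfies: $W$ is an $\F^X$-Brownian motion, $X_0\sim\mu_0$, and $\mu_t:=\Lc(X_t)$ satisfies $\Wc_2(\mu^n_t,\mu_t)\to 0$ uniformly in $t$. The key object to identify in the limit is the martingale problem: for $f\in C_b^2(\R^d)$, the process
\[
f(X^n_t)-\int_0^t \Lc^n_s f(X^n_s)\,\mathrm{d}s,\qquad
\Lc^n_s f(x):=\Lambda(\Theta^n_s)^\intercal\nabla f(x)+\tfrac12\trace\big[a^n(s,x,\mu^n_s)\nabla^2 f(x)\big],
\]
is a $\P^n$-martingale, with $a^n:=(\Gamma\Phi^n\!\circ\gamma)(\Gamma\Phi^n\!\circ\gamma)^\intercal$. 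I would show this passes to the limit and yields that $f(X_t)-\int_0^t\Lc_s f(X_s)\,\mathrm{d}s$ is a martingale with the \emph{true} coefficient $a=(\Gamma\Phi\!\circ\gamma)(\Gamma\Phi\!\circ\gamma)^\intercal$. Given the martingale problem with measurable, uniformly elliptic, bounded $a$ and bounded measurable drift, a standard representation theorem produces (on a possibly enlarged space) a Brownian motion $\widetilde W$ with $\mathrm{d}X_t=\Lambda(\Theta_t)\mathrm{d}t+\Gamma(\Theta_t)\Phi(\gamma(\Theta_t))\mathrm{d}\widetilde W_t$; since $\mu_t=\Lc(X_t)$ by construction of the limit, this is the desired weak solution, and the uniform moment bound is inherited in the limit by Fatou.

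\medskip

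\noindent\textbf{The main obstacle.} The crux is the convergence $\int_0^t a^n(s,X^n_s,\mu^n_s)\,\mathrm{d}s\to\int_0^t a(s,X_s,\mu_s)\,\mathrm{d}s$ (in expectation against a bounded continuous test functional of $X^n_{s\wedge\cdot}$), because $a$ is only continuous off a Lebesgue-null set. Pointwise convergence $a^n\to a$ on $\Jc(t,m)\times\{m\}$ is not enough on its own — one must control the amount of time $X^n_s$ spends near the bad set $\Jc(s,\mu_s)^c$. Here is where uniform ellipticity is essential: by the Krylov estimate (or an Aronson-type bound on the transition density of $X^n$, uniform in $n$ thanks to uniform ellipticity and uniform bounds on the coefficients), for any Borel $g$ one has $\E^{\P^n}\big[\int_0^T|g(s,X^n_s)|\mathrm{d}s\big]\le C\|g\|_{L^{d+1}([0,T]\times\R^d)}$ with $C$ independent of $n$. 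Applying this to $g=\mathbf 1_{\Jc(s,m)^c}$ for a fixed $m$, and using Assumption \ref{assum:SDE}(iii) (which states $\Jc(s,m)^c$ is Lebesgue-null for a.e. $s$), gives that $X^n$ essentially never sees the discontinuity set; combined with the Portmanteau/continuous-mapping argument on the limit law (whose time-marginals one shows also charge no mass on $\Jc(s,\mu_s)^c$, again via the density bound passed to the limit, exactly as in Proposition \ref{prop:lebesgue}), this yields the required convergence. The remaining ingredients — tightness, the moment estimate, and the martingale representation — are routine. I expect this argument to be carried out along the lines of \cite{MFD-2020} and \cite{lacker2017limit}, already cited in the paper, adapted to the non-particle (mollification) setting; the density-of-the-limit step is precisely the content of Section \ref{sec:density}.
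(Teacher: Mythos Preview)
Your strategy shares the essential skeleton of the paper's proof---compactness of an approximating sequence, absolute continuity of the limiting marginals via uniform ellipticity (Section~\ref{sec:density}), and a Portmanteau argument exploiting Assumption~\ref{assum:SDE}(iii) to pass through the discontinuity of $\Phi\circ\gamma$---so the core ideas are all present and correctly identified.

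The paper, however, uses a different and cleaner approximation: instead of mollifying $\Phi$, it takes the \emph{Euler time-discretization} $X^n$ of \eqref{eq:weak_McK-irregular} on the dyadic grid $[t]^n$. This buys two things you would otherwise have to work for. First, the Euler scheme is trivially well-defined for merely Borel coefficients, whereas your step ``with $\Phi^n$ in place of $\Phi$, the coefficients are continuous in $(x,m)$'' is not justified: Assumption~\ref{assum:SDE} imposes no continuity of $\gamma$ (or $\Lambda,\Gamma$) in $x$, only Lipschitz in $m$, so $\Phi^n\circ\gamma$ need not be continuous and the ``standard McKean--Vlasov existence'' you invoke is not immediate. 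Second, in the paper's limit identification the integrand $(\Gamma\,\Phi\!\circ\!\gamma)^2$ is \emph{fixed} and only the measure $\mathrm{P}^n$ varies, so a single Portmanteau step (once the limit law has a density) suffices; in your scheme both $\Phi^n$ and $\mu^n$ move, forcing the extra Krylov-estimate layer you sketch to first replace $\Phi^n$ by $\Phi$ uniformly in $n$. That layer can be made to work, but it is avoidable.

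In short: your plan is morally correct and would go through with additional care, but the paper's Euler discretization removes the need for an auxiliary McKean--Vlasov existence result and collapses your double limit into a direct application of Portmanteau to a fixed discontinuous integrand.
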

	
\proof
{\color{black}
For $n \in \N^*$ and $t \in [0,T),$ we introduce the $n-$dyadic projection $[t]^n:=2^{-n}T\lfloor\frac{2^nt}{T}\rfloor,$, where $\lfloor\cdot\rfloor$ denotes the floor function. Consider the Euler discretization of the SDE \eqref{eq:weak_McK-irregular}:
\begin{align*}
        X^n_\cdot
	    =
	    X_0
	    \!+\!\!
	    \int_0^\cdot\! \Lambda(\Theta^n_{[t]^n}\!) \mathrm{d}t
	    \!+\!\!
	    \int_0^\cdot\! (\Gamma\, \Phi \!\circ\!\gamma)\big(\Theta^n_{[t]^n}\!\big) \mathrm{d}W_t~\mbox{with}~
	    \Theta^n_t=(t,X^n_t,\mu^n_t),~\mu^n_t:=\Lc(X^n_t),
\end{align*}
{\bf 1.} As $\mu_0 \in \Pc_p(\R^d)$ with $p>2$, the sequence $\big(\mu^n:=\Lc(X^n)\big)_{n \in \N^*}$ is relatively compact in $\Wc_2$, and converges to some limit $\mu^\infty$, in $\Wc_2$, after possibly passing to a sub--sequence. Then
 $$
    \Pr^n(\mathrm{d}m,\mathrm{d}x,\mathrm{d}t)
    :=
    \delta_{\mu^n_t}(\mathrm{d}m) \mu^n_t(\mathrm{d}x) \frac{\mathrm{d}t}{T} 
    ~\longrightarrow~
    \Pr^\infty(\mathrm{d}m,\mathrm{d}x,\mathrm{d}t)
    :=
    \delta_{\mu^\infty_t}(\mathrm{d}m) \mu^\infty_t(\mathrm{d}x) \frac{\mathrm{d}t}{T},
    ~\mbox{in}~\Wc_2.
 $$
Moreover, as $\E \big[ \sup_{t \in [0,T]} |X^n_t - X^n_{[t]^n}|^2 \big]=0$, as $n\to\infty$, we see that
\begin{eqnarray} \label{eq:eq:delay_limit}
    \lim_{n \to \infty} \Wc_2 \big(\overline{\Pr}^n, \Pr^n \big)=0,
    &\mbox{where}&
    \overline{\Pr}^n(\mathrm{d}m,\mathrm{d}x,\mathrm{d}t)
    :=
    \delta_{\mu^n_{[t]^n}}(\mathrm{d}m) \mu^n_{[t]^n}(\mathrm{d}x) \frac{\mathrm{d}t}{T}.
\end{eqnarray}
Since the diffusion coefficient is uniformly elliptic, it follows from similar techniques as in the proof of Proposition \ref{prop:lebesgue} that $\mu^\infty$ is absolutely continuous w.r.t. the Lebesgue measure on $[0,T]\x\R$. Together with Assumption \ref{assum:SDE} (iii), this implies that the set of discontinuity points of the map $\Gamma\,\Phi\circ\gamma$ is $\Pr^\infty-$negligible. Then, it follows from the Portemanteau Theorem that, for all bounded continuous function $\varphi$:
\begin{align} \label{eq:portemanteau_appendix}
    \!\!\lim_{n \to \infty} \!\int\!\!\varphi(x)(\Gamma\Phi\!\circ\!\gamma)(t,x,m)^2  
                                          \;\Pr^n\!(\mathrm{d}m,\!\mathrm{d}x,\!\mathrm{d}t) 
    \!= \!
    \int\!\!\varphi(x)(\Gamma\Phi\!\circ\!\gamma)(t,x,m)^2 
             \;\Pr^\infty\!(\mathrm{d}m,\!\mathrm{d}x,\!\mathrm{d}t).
\end{align}
{\bf 2.} For an arbitrary function $f \in C_b^2(\R^d),$ and $t\in[0,T]$, if follows from It\^{o}'s formula that
\begin{align*}
    \langle f,\mu^\infty_t \rangle &=
    \lim_{n \to \infty} \E[f(X^n_t)]
    \\
    &=\lim_{n \to \infty}\;\;
	        \E[f(X_0)]
	        +
	        \int_0^t \E\bigg[\nabla f(X^n_s)\!\cdot\!\Lambda(\Theta^n_{[s]^n})             +\frac{1}{2} \mbox{{\rm Tr}}\big(\nabla^2\!f(X^n_s)\Phi \circ\gamma(\Theta^n_{[s]^n}) \big) 
	                        \bigg]
	        \mathrm{d}s
	        \\
    &=\lim_{n \to \infty}\;\;
	        \E[f(X_0)]
	        +
	        \int_0^t \E\bigg[\nabla f(X^n_{[s]^n})\!\cdot\!\Lambda(\Theta^n_{[s]^n})             +\frac{1}{2} \mbox{{\rm Tr}}\big(\nabla^2\!f(X^n_{[s]^n})\Phi \circ\gamma(\Theta^n_{[s]^n}) \big) 
	                        \bigg]
	        \mathrm{d}s
\end{align*}
by \eqref{eq:eq:delay_limit}. Then, it follows from \eqref{eq:portemanteau_appendix} that
	\begin{align*}
    \langle f,\mu^\infty_t \rangle 
    &=\langle f,\mu_0 \rangle +
	       \int_0^t \!\!\int_{\R^d}\; \Big(\nabla f(x)\!\cdot\!\Lambda(s,x,\mu^\infty_s)                        + \frac{1}{2} \mbox{{\rm Tr}}\big( \nabla^2f(x) \Phi \circ\gamma(s,x,\mu^\infty_s) \big) \Big) \; \mu^\infty_s(\mathrm{d}x)
	        \mathrm{d}s.
\end{align*}
From the arbitrariness of $f \in C_b^2(\R^d),$ and $t\in[0,T]$, we can find a filtered probability space $(\widetilde\Omega,\widetilde\F,\widetilde\Fc,\widetilde{\P})$ supporting a $\widetilde\F$--Brownian motion $\widetilde W$ and a $\widetilde\F$--adapted continuous process $\widetilde X$ where $\widetilde X$ satisfies \eqref{eq:weak_McK-irregular} with Brownian motion $\widetilde W,$ $\Lc^{\widetilde{\P}}(\widetilde X)=\mu^\infty,$ and by Fatou's Lemma $\E^{\widetilde{\P}} \big[ \sup_{t \in [0,T]} |\widetilde{X}_t|^p \big] < \infty$.
}
\ep

{\color{black}

\section{Appendix: Existence of density for the limiting distribution of a uniformly elliptic particles system} \label{sec:density}
In this last section, we show that, under appropriate conditions on the coefficients, the marginals of the limit distribution of some particles systems are absolutely continuous w.r.t. the Lebesgue measure. Before stating our result, let us first introduce the framework.
On the probability space $(\Om, \F, \P)$ supporting a sequence of Brownian motions $(W^i)_{i \ge 1},$ $\Wbb^N:=(W^1,\cdots,W^N),$ let $S^N:=(S^{i,N})_{1 \le i \le N}$, $N\ge 1$, be a sequence of processes satisfying
\begin{align*}
    S^{N}_\cdot
    =
    S^{N}_0
    +
    \int_0^\cdot p^{N}_t \mathrm{d}t
    +
    \int_0^\cdot q^{N}_t \mathrm{d}\Wbb^N_t
\end{align*}
with $\sup_{N \ge 1} \sup_{1 \le i \le N} |q^{i,i,N}_t|^2 \ge \delta,$ $\mathrm{d}\P \otimes \mathrm{d}t$--a.e. and for some $p >2,$
\begin{align} \label{cond:integrability}
    \sup_{N \ge 1}\frac{1}{N}\sum_{i=1}^N\E \bigg[ |S^{i,N}_0|^p
    +
    \int_0^T |p^{i,N}_t|^p \mathrm{d}t
    +
    \int_0^T |q^{i,i,N}_t|^p
    + N |\sum_{k\neq i} \sum_{j \neq i} q^{i,k,N}_t q^{i,j,N}_t|^2\mathrm{d}t
    \bigg]
    <
    \infty.
\end{align}
We denote by $\mu$ the canonical process on $\Pc(\Cc)$, and we introduce the sequence of laws of the particles empirical measure $(\Qr^N)_{N \in \N^\star}\subset \Pc(\Pc(\Cc))$ defined by
\begin{align*}
    \Qr^N
    :=
    \P \circ ( \mu^N )^{-1},
    ~~ \mbox{where}~~\mu^N:=\frac1N \sum_{i=1}^N \delta_{S^{i,N}}.
\end{align*}
The main result of this section is the following.

\begin{Proposition} \label{prop:lebesgue}
    The sequence $(\Qr^N)_{N \in \N^\star}$ is relatively compact in $\Wc_2.$ In addition, for any limit point $\Qr^\infty$, we have that
    \begin{align*}
    \mu_t(\mathrm{d}x)\mathrm{d}t\mbox{ is absolutely continuous w.r.t. the Lebesgue measure on }\R \x [0,T],
        ~\Qr^\infty\mbox{--a.s.}
    \end{align*}
\end{Proposition}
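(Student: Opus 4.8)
The relative compactness is routine; the substance is the almost-sure absolute continuity, and the plan is to obtain it by a \emph{quantitative mollification} of the empirical measure together with a Krylov-type occupation estimate for the individual particles.

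\emph{Step 1 (relative compactness).} First I would deduce from \eqref{cond:integrability}, the Burkholder--Davis--Gundy inequality and Gronwall's lemma that $\sup_{N\ge1}\frac1N\sum_{i=1}^N\E\big[\sup_{t\in[0,T]}|S^{i,N}_t|^p\big]<\infty$ with $p>2$. This controls the mean empirical measures $\bar\mu^N:=\frac1N\sum_{i=1}^N\Lc(S^{i,N})$ uniformly in the $p$-th moment; together with a Kolmogorov/Aldous modulus-of-continuity estimate (using the drift and diffusion integrability in \eqref{cond:integrability}), it shows that $(\bar\mu^N)_N$ is tight in $\Pc(\Cc)$ and uniformly $2$-integrable. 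By the standard equivalence between tightness of a sequence in $\Pc(\Pc(\Cc))$ and tightness of the associated mean measures, $(\Qr^N)_N$ is then relatively compact for $\Wc_2$. I fix a subsequence, still denoted $(\Qr^N)_N$, with $\Qr^N\to\Qr^\infty$ in $\Wc_2$, and for $\mu\in\Pc(\Cc)$ I write $\nu^\mu(\mathrm{d}x,\mathrm{d}t):=\mu_t(\mathrm{d}x)\,\mathrm{d}t$, a Borel measure on $[0,T]\times\R$ of total mass $T$ whose marginal on $[0,T]$ is $\mathrm{d}t$. The goal is that $\nu^\mu$ is absolutely continuous, $\Qr^\infty$--a.s.

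\emph{Step 2 (mollification and a uniform $L^2$ bound).} Fix an even smooth mollifier $\rho$ on $\R$, set $\rho_\eta:=\eta^{-1}\rho(\cdot/\eta)$, and pick $\eta_N\downarrow0$ with $N\eta_N\to\infty$. Let $g^N_t(x):=\frac1N\sum_{i=1}^N\rho_{\eta_N}(x-S^{i,N}_t)$ and $\nu^{N}(\mathrm{d}x,\mathrm{d}t):=g^N_t(x)\,\mathrm{d}x\,\mathrm{d}t$, which is absolutely continuous and satisfies $\Wc_2(\nu^{N},\nu^{\mu^N})\le C\eta_N\to0$. Expanding the square,
\[
\E^{\P}\Big[\big\|g^N\big\|_{L^2([0,T]\times\R)}^2\Big]
=\frac1{N^2}\sum_{i,j=1}^N\E^{\P}\Big[\int_0^T(\rho_{\eta_N}*\rho_{\eta_N})\big(S^{i,N}_t-S^{j,N}_t\big)\,\mathrm{d}t\Big].
\]
The $N$ diagonal terms contribute at most $\frac1{N^2}\,N\,T\,(\rho_{\eta_N}*\rho_{\eta_N})(0)\le \frac{CT}{N\eta_N}\to0$. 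For the off-diagonal terms I would use that $\inf_{N,i}(q^{i,i,N}_t)^2\ge\delta$ makes each $S^{i,N}$ a uniformly elliptic one-dimensional It\^o process: a Krylov-type occupation estimate, applied to the pair increments $S^{i,N}-S^{j,N}$ (or after conditioning on the $i$-th driving Brownian motion), together with the integrability \eqref{cond:integrability} — which is tailored precisely so that the off-diagonal part of $\langle S^{i,N}\rangle$ is asymptotically negligible — and the boundedness of $(b,\sigma)$, gives a constant $C$ independent of $(i,j,N)$ and of $\eta_N$ with $\E^{\P}\big[\int_0^T(\rho_{\eta_N}*\rho_{\eta_N})(S^{i,N}_t-S^{j,N}_t)\,\mathrm{d}t\big]\le C$. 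Hence the off-diagonal sum is $\le C$, and $\sup_{N\ge1}\E^{\P}\big[\|g^N\|_{L^2([0,T]\times\R)}^2\big]<\infty$.

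\emph{Step 3 (passing to the limit).} Along the chosen subsequence, the law of $\nu^{N}$ converges in $\Wc_2$ to the law of $\nu^\mu$ under $\Qr^\infty$, because $\mu^N\to\mu$ in law for $\Wc_2$ and $\Wc_2(\nu^{N},\nu^{\mu^N})\to0$. The functional $\Psi(\mu):=\|\text{density of }\nu^\mu\|_{L^2([0,T]\times\R)}^2\in[0,+\infty]$ (with the convention $\Psi(\mu)=+\infty$ when $\nu^\mu$ is not absolutely continuous) is sequentially lower semicontinuous for weak convergence of $\nu^\mu$, hence for $\Wc_2$--convergence of $\mu$; so Fatou's lemma / the Portmanteau theorem gives
\[
\E^{\Qr^\infty}\big[\Psi(\mu)\big]\;\le\;\liminf_{N\to\infty}\E^{\P}\Big[\|g^N\|_{L^2([0,T]\times\R)}^2\Big]\;<\;\infty .
\]
Thus $\Qr^\infty$--a.e.\ $\mu$ has $\Psi(\mu)<\infty$, i.e.\ $\nu^\mu=\mu_t(\mathrm{d}x)\,\mathrm{d}t$ admits an $L^2$ density, and a fortiori is absolutely continuous with respect to the Lebesgue measure on $\R\times[0,T]$, which is the assertion.

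\emph{Expected main obstacle.} The heart of the argument is the uniform bound of Step 2, and within it the control of the off-diagonal ``pairwise collision intensities'' $\E^{\P}\big[\int_0^T(\rho_{\eta_N}*\rho_{\eta_N})(S^{i,N}_t-S^{j,N}_t)\,\mathrm{d}t\big]$ as $\eta_N\to0$: this is exactly where the uniform ellipticity of the system (each particle carrying an independent amount $\ge\delta$ of its own noise, which prevents clumping), the integrability hypotheses \eqref{cond:integrability}, and the boundedness of $(b,\sigma)$ are used in an essential way — a mere in-expectation Krylov bound on $\nu^{\mu^N}$ is not enough, since the limit measure $\mu$ is genuinely random. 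Steps 1 and 3 are soft by comparison. An analogous line of reasoning appears in \cite{lacker2017limit} and \cite{djete2019general}.
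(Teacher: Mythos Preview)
Your approach is genuinely different from the paper's, and the crucial step has a real gap.

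The paper does not try to control the occupation density of the finite particle system at all. Instead it enlarges the state space so as to carry, alongside $S^{i,N}$, the empirical drift and squared--diagonal--volatility measures $\widetilde b^N=\frac1N\sum_i\delta_{p^{i,N}_t}\mathrm{d}t$ and $\widetilde a^N=\frac1N\sum_i\delta_{|q^{i,i,N}_t|^2}\mathrm{d}t$, proves tightness of the enlarged empirical measure, and then runs a martingale--problem argument: for every $f\in C^2_b$ the functional $M^f_t=f(\widetilde X_t)-\int_0^t\!\int(f'\,e\,\widetilde b_s+\tfrac12 f''\,e\,\widetilde a_s)\mathrm{d}s$ is shown to be a $(\widetilde\F,\widetilde\mu)$--martingale for $\widetilde{\Qr}^\infty$--a.e.\ realisation of $\widetilde\mu$. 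The condition \eqref{cond:integrability} on $N|\sum_{k\neq i}\sum_{j\neq i}q^{i,k}q^{i,j}|^2$ is used exactly once, to kill the off--diagonal contribution to the quadratic variation in this martingale identification. One then reads off that, $\Qr^\infty$--a.s., the canonical process is a weak solution of a one--dimensional SDE whose diffusion coefficient satisfies $\int e\,\widetilde a_t(\mathrm{d}e)\ge\delta$, and the absolute continuity of $\mu_t(\mathrm{d}x)\mathrm{d}t$ is imported from parabolic/Fokker--Planck regularity (the paper cites \cite[Corollary 6.3.2]{FK-PL-equations}). No pairwise collision estimate, no quantitative $L^2$ bound.

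Your Step~2 is where the argument breaks. You assert a uniform bound $\E^{\P}\big[\int_0^T(\rho_{\eta_N}\!*\!\rho_{\eta_N})(S^{i,N}_t-S^{j,N}_t)\,\mathrm{d}t\big]\le C$, but under the stated hypotheses the difference process $Y^{i,j}:=S^{i,N}-S^{j,N}$ is \emph{not} uniformly elliptic: its quadratic variation is $\sum_k(q^{i,k,N}_t-q^{j,k,N}_t)^2\,\mathrm{d}t$, and the assumption $\inf_{N,i}|q^{i,i,N}_t|^2\ge\delta$ says nothing about the rows of $q$ being distinct --- one may have $q^{i,\cdot,N}\equiv q^{j,\cdot,N}$ while still satisfying all of \eqref{cond:integrability}, in which case $\langle Y^{i,j}\rangle\equiv 0$ and the Krylov/local--time bound fails outright. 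Your fallback of ``conditioning on the $i$--th Brownian motion'' does not decouple the particles either, since $p^{i,N},q^{i,\cdot,N}$ are arbitrary adapted processes (not Markovian functionals of $S^{i,N}$), and $S^{j,N}$ itself depends on $W^i$ through $q^{j,i,N}$. You also invoke ``boundedness of $(b,\sigma)$'', but those are not part of the hypotheses of this proposition --- it is stated for abstract $(p^{i,N},q^{i,\cdot,N})$. Finally, the references you cite for ``an analogous line of reasoning'' (\cite{lacker2017limit}, \cite{djete2019general}) are in fact the sources the paper uses for the martingale--problem limit, not for a mollification/Krylov argument.

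In short: Steps~1 and~3 are fine, but Step~2 needs either a genuine proof of the pair occupation bound under the abstract hypotheses (which seems to require more than is assumed), or you should switch to the paper's route --- pass the coefficients to the limit, identify the limiting SDE via the martingale problem, and invoke Fokker--Planck regularity for the density.
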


\begin{proof} Consider the sequence $(\widetilde{\Qr}^N)_{N \in \N^\star} \subset \Pc(\Pc(\Cc \x \M(\R) \x \M(\R)))$  defined by $\widetilde{\Qr}^N
        :=
        \P \circ ( \mut^N )^{-1}$ where
    \begin{align*}
       \mut^N:=\frac{1}{N} \sum_{i=1}^N \delta_{(S^{i,N},\widetilde{p}^N, \widetilde{a}^N)},\;\widetilde{b}^N:=\frac{1}{N} \sum_{i=1}^N \delta_{p^{i,N}_t}(\mathrm{d}e)\mathrm{d}t\;\mbox{and}\;\widetilde{a}^N:=\frac{1}{N} \sum_{i=1}^N \delta_{|q^{i,i,N}_t|^2}(\mathrm{d}e)\mathrm{d}t.
    \end{align*}
    Using \eqref{cond:integrability}, it is easy to see that $(\widetilde{\Qr}^N)_{N \in \N^\star}$ is relatively compact in $\Wc_2.$ Let $\widetilde{\Qr}^\infty$ be the limit of a sub--sequence. For simplicity, we use the same notation for the sequence and the sub--sequence.
    Let $\mut$ be the canonical variable on $\Pc(\Omt)$ and $(\Xt,\widetilde{p},\widetilde{a})$ be the canonical process on $\Omt:=\Cc \x \M(\R) \x \M(\R).$ We denote by $\Ft$ the canonical space on $\Omt.$ For any $f \in C_b^2(\R),$ on the canonical space $\Omt,$ we define the process
    \begin{align*}
      M^f(t,\Xt,\widetilde{b},\widetilde{a})
      :=
      f(\Xt_t)-f(\Xt_0)-\int_0^t \Big(\int_\R\; f'(\Xt_s) e \widetilde{p}_s(\mathrm{d}e) + \frac{1}{2} f^{''}(\Xt_s) e \widetilde{a}_s(\mathrm{d}e)\Big)\mathrm{d}s,
    \end{align*}
and we now show that $\widetilde{\Qr}^\infty$--a.e. $(M^f(t,\Xt,\widetilde{p},\widetilde{a}))_{t \in [0,T]}$ is a $(\Ft,\mut)$--martingale for all $f \in C_b^2(\R).$ For this, we introduce the process $(M^i_t)_{t \in [0,T]}$ defined by
 \begin{align*}
        M^i_t
        &:=
        f(X^i_t)-f(X^i_0)
        -\bigg[ \int_0^t f'(X^i_s) p^{i,N}_s \mathrm{d}s + \frac{1}{2} f^{''}(X^i_s) \sum_{j=1}^N \sum_{k=1}^N q^{i,j}_s q^{i,k}_s \mathrm{d}s \bigg]
        \\
        &=:
        \widetilde{M}^i_t -\int_0^t \frac{1}{2} f^{''}(X^i_s) \sum_{j=1,j \neq i}^N \sum_{k=1, k \neq i}^N q^{i,j}_s q^{i,k}_s \mathrm{d}s.
    \end{align*}
By It\^o's formula, we have 
    \begin{align*}
        M^i_t
        =
        \int_0^t f'(X^i_s) \sum_{j=1}^N q^{i,j,N}_s \mathrm{d}W^j_s.
    \end{align*}
    Next, let $\phi: \Omt \to \R$ be a continuous bounded function and $t \ge s,$ by using the weak convergence, \eqref{cond:integrability} and the independence of Brownian motions, we see that
    \begin{align*}
        &\E^{\widetilde{\Qr}^\infty} \Big[ \E^{\mut} \Big[\big( M^f(t,\Xt,\widetilde{p},\widetilde{a}) - M^f(s,\Xt,\widetilde{p},\widetilde{a}) \big) \phi(\Xt_{s \wedge \cdot},\widetilde{p}_{s \wedge \cdot},\widetilde{a}_{s \wedge \cdot}) \Big]^2 \Big]
        \\
        &=
        \Lim_{N \to \infty} \E^{\widetilde{\Qr}^N} \Big[ \E^{\mut} \Big[\big( M^f(t,\Xt,\widetilde{p},\widetilde{a}) - M^f(s,\Xt,\widetilde{p},\widetilde{a}) \big) \phi(\Xt_{s \wedge \cdot},\widetilde{p}_{s \wedge \cdot},\widetilde{a}_{s \wedge \cdot}) \Big]^2 \Big]
        \\
        &\le C \;
        \Lim_{N \to \infty} \E \bigg[ \bigg|\frac{1}{N} \sum_{i=1}^N\big( \widetilde{M}^i_t - \widetilde{M}^i_s \big) \phi(X^i_{s \wedge \cdot},\widetilde{p}^N_{s \wedge \cdot},\widetilde{a}^N_{s \wedge \cdot}) \bigg|^2 \bigg] 
        + \E \bigg[ \bigg|\frac{1}{N} \sum_{i=1}^N \int_s^t \sum_{j \neq i} \sum_{k \neq i} \big|q^{i,j}_s q^{i,k}_s \big| \mathrm{d}s  \bigg|^2 \bigg]
        \\
        &\le C \;
        \Lim_{N \to \infty} \E \bigg[ \bigg|\frac{1}{N} \sum_{i=1}^N\big( \widetilde{M}^i_t - \widetilde{M}^i_s \big) \phi(X^i_{s \wedge \cdot},\widetilde{p}^N_{s \wedge \cdot},\widetilde{a}^N_{s \wedge \cdot}) \bigg|^2 \bigg] 
        + \E \bigg[ \frac{1}{N} \sum_{i=1}^N \int_s^t \big|\sum_{j \neq i} \sum_{k \neq i} q^{i,j}_s q^{i,k}_s \big|^2 \mathrm{d}s   \bigg]
        \\
        &=0,
    \end{align*}
by \eqref{cond:integrability}. As the last calculation holds for countable set of $f \in C_b^2(\R),$ it follows that $\widetilde{\Qr}^\infty$--a.e. $(M^f(t,\Xt,\widetilde{p},\widetilde{a}))_{t \in [0,T]}$ is a $(\Ft,\mut)$--martingale for all $f \in C_b^2(\R).$ Therefore, for all $\omt \in \Omt,$ on $(\Omt \x [0,1],(\Fct_t \otimes \Bc([0,1]))_{t \in [0,T]},\mut(\om) \otimes \lambda)$ an extension of $(\Omt,(\Fct_t)_{t \in [0,T]},\mut(\om))$ supporting a Brownian motion $\Wt,$ we get that $\widetilde{\Qr}^\infty$--a.e. $\om,$ $\mut(\om)$--a.e.
    \begin{align*}
        \mathrm{d}\Xt_t
        =
        \int_\R e \widetilde{b}_t(\mathrm{d}e) \mathrm{d}t
        +
        \sqrt{\int_\R e \widetilde{a}_t(\mathrm{d}e)} \mathrm{d}\Wt_t.
    \end{align*}
    It is straightforward to check that $\int_\R e \widetilde{a}_t(\mathrm{d}e) \ge \delta$ $\mathrm{d}\mut(\om) \otimes \mathrm{d}t$ a.e. {\color{black} By \cite[Corollary 6.3.2.]{FK-PL-equations}, we can therefore conclude that $\mut(\om)\circ(\Xt_t)^{-1}(\mathrm{d}x)\mathrm{d}t$ has a density w.r.t. the Lebesgue measure on $\R \x [0,T].$ }
    \qed
\end{proof}

}

\bibliographystyle{plain}
\bibliography{Djete-Touzi-Rev_Arxiv}

\end{document}